\theoremstyle{definition}
\newtheorem{thm}{Theorem}[section]
\newtheorem{lem}[thm]{Lemma}
\newtheorem*{lem*}{Lemma}
\newtheorem*{thm*}{Theorem}
\newtheorem{prop}[thm]{Proposition}
\newtheorem{cor}[thm]{Corollary}
\newtheorem{defn}[thm]{Definition}
\newtheorem*{remark*}{Remark}
\newtheorem{remark}{Remark}
\newtheorem{example}{Example}
\newtheorem{cor/defn}[thm]{Corollary/Definition}
\DeclareMathOperator{\Par}{\mathbb{Y}}
\DeclareMathOperator{\SSYT}{\mathrm{SSYT}}
\DeclareMathOperator{\sH}{\mathscr{H} }
\DeclareMathOperator{\sP}{\mathscr{P} }
\DeclareMathOperator{\Dim}{\mathrm{dim}}
\DeclareMathOperator{\Hom}{\mathrm{Hom}}
\DeclareMathOperator{\Char}{\mathrm{char}}
\DeclareMathOperator{\GL}{\mathrm{GL}}
\DeclareMathOperator{\bor}{\mathrm{B}}
\DeclareMathOperator{\para}{\mathrm{P}}
\DeclareMathOperator{\tor}{\mathrm{H}}
\DeclareMathOperator{\levi}{\mathrm{L}}
\DeclareMathOperator{\sort}{sort}
\DeclareMathOperator{\rev}{rev}
\DeclareMathOperator{\Inv}{\mathrm{Inv}}
\DeclareMathOperator{\inv}{inv}
\DeclareMathOperator{\End}{\mathrm{End}}
\newcommand{\y}{\mathscr{Y}}
\DeclareMathOperator{\mathleft}{left}
\DeclareMathOperator{\mathright}{right}
\DeclareMathOperator{\arm}{arm}
\DeclareMathOperator{\leg}{leg}
\DeclareMathOperator{\Des}{Des}
\DeclareMathOperator{\maj}{maj}
\DeclareMathOperator{\coinv}{coinv}
\DeclareMathOperator{\mathleg}{lg}
\DeclareMathOperator{\Comp}{\mathrm{Comp}}
\DeclareMathOperator{\Compred}{\mathrm{Comp}^{\textit{red}}}
\DeclareMathOperator{\Exp}{\mathrm{Exp}}
\title{Almost Symmetric Schur Functions}
\author{Milo Bechtloff Weising}
\date{\today}
\begin{document}

\maketitle

\abstract{We introduce and study a generalization $s_{(\mu|\lambda)}$ of the Schur functions called the almost symmetric Schur functions. These functions simultaneously generalize the finite variable key polynomials and the infinite variable Schur functions. They form a homogeneous basis for the space of almost symmetric functions and are defined using a family of recurrences involving the isobaric divided difference operators and limits of Weyl symmetrization operators. The $s_{(\mu|\lambda)}$ are the $q=t=0$ specialization of the stable limit non-symmetric Macdonald functions $\widetilde{E}_{(\mu|\lambda)}$ defined by the author in previous work. We find a combinatorial formula for these functions simultaneously generalizing well known formulas for the Schur functions and the key polynomials. Further, we prove positivity results for the coefficients of the almost symmetric Schur functions expanded into the monomial basis and into the monomial-Schur basis of the space of almost symmetric functions. The latter positivity result follows after realizing the almost symmetric Schur functions $s_{(\mu|\lambda)}$ as limits of characters of representations of parabolic subgroups in type $\GL.$}

\tableofcontents

\section{Introduction}

The Schur functions $s_{\lambda}$ are a central object in algebraic combinatorics  directly linking the representation theory of symmetric groups and $\GL_n$, the geometry of Grassmanians, and the combinatorics of Young tableaux. The functions $s_{\lambda}$ form an exceptional basis for the ring of symmetric functions $\Lambda$ with many remarkable combinatorial properties. In modern algebraic combinatorics, a good deal of work has been devoted to studying generalizations of the Schur functions. In particular, the symmetric Macdonald functions $P_{\lambda}[X;q,t]$ \cite{Macdonald} are an important two-parameter generalization of the Schur functions $s_{\lambda} = P_{\lambda}[X;0,0]$ relating to the representation theory of double affine Hecke algebras and the geometry of Hilbert schemes. The symmetric Macdonald functions are the symmetric analogues of the non-symmetric Macdonald polynomials $E_{\mu}$ of Cherednik \cite{C_2001}. These finite variable polynomials $E_{\mu}$ are important in the study of double affine Hecke algebras and satisfy many interesting algebraic and combinatorial properties \cite{haglund2007combinatorial}. Particularly, the $E_{\mu}$ are simultaneous eigenvectors for a special family of operators known as the \textit{Cherednik operators} $Y_i.$ It was shown by Ion \cite{Ion_2003} that the specialization $q,t \rightarrow 0$ of the non-symmetric Macdonald polynomials recovers the key polynomials $\mathcal{K}_{\mu}.$ The key polynomials are a non-symmetric analogue of the Schur functions relating to the geometry of Schubert varieties through the famous Demazure character formula \cite{Dem_1974} \cite{Andersen1985}.

Recently the proof of the Shuffle Theorem of Carlsson-Mellit \cite{CM_2015} introduced the idea of extending the ring of symmetric to include \textit{almost symmetric} functions; that is infinite variable polynomials $f(x_1,x_2,\ldots)$ for which there some $n \geq 1$ such that $f$ is symmetric in the variables $x_n,x_{n+1},\ldots.$ Geometrically, the almost symmetric functions roughly correspond to the equivariant $K$-theory of the parabolic flag Hilbert schemes of points in $\mathbb{C}^2$ \cite{GCM_2017}.  It was shown by Ion-Wu \cite{Ion_2022} that the ring of almost symmetric functions $\sP_{as}^{+}$ is a module for the positive stable-limit double affine Hecke algebra. This action includes a special family of commuting operators $\y_i$ known as the \textit{limit Cherednik operators}. It was shown in the author's prior work \cite{MBWArxiv} that there exists a family of almost symmetric functions $\widetilde{E}_{(\mu|\lambda)}[x_1,x_2,\ldots;q,t]$ called the \textit{stable-limit non-symmetric Macdonald functions} indexed by pairs of compositions $\mu$ and partitions $\lambda$ which are simultaneous eigenvectors for the limit Cherednik operators $\y_i.$ On the non-symmetric extreme $\widetilde{E}_{(\mu|\emptyset)}$ are limits of non-symmetric Macdonald polynomials $E_{\mu*0^n}$ and on the fully-symmetric extreme $\widetilde{E}_{(\emptyset|\lambda)}$ recover the symmetric Macdonald polynomials $P_{\lambda}$. These almost symmetric functions are related to a similar construction of Lapointe of \textit{m-symmetric} Macdonald polynomials \cite{lapointe2022msymmetric} and to the work of Goodberry \cite{Goodberry} and Orr-Goodberry \cite{goodberry2023geometric}. 

It is natural to wonder whether the $q,t \rightarrow 0$ specializations of the $\widetilde{E}_{(\mu|\lambda)}$ fill a role in the theory of almost symmetric functions similar to the special place that the Schur functions fill in the theory of symmetric functions. In this paper, we introduce a family of almost symmetric functions $s_{(\mu|\lambda)}$ called the \textit{almost symmetric Schur functions} (Definition \ref{almost sym schur def}). These functions are the $q,t \rightarrow 0$ specialization of the $\widetilde{E}_{(\mu|\lambda)}$ (Theorem \ref{specialization theorem}) and may be defined using a family of simple recurrence relations involving the \textit{isobaric divided difference operators} $\xi_i$ and the \textit{Weyl symmetrization operators} $W_k$. We find an explicit combinatorial model for the $s_{(\mu|\lambda)}$ derived from the Haglund-Haiman-Loehr formula for the non-symmetric Macdonald polynomials (Theorem \ref{combinatorial formula for almost sym schur}). Using this formula we find a non-negative combinatorial formula for the monomial expansion of the almost symmetric Schur functions (Theorem \ref{Positivity for almost symmetric Kostka coefficients}). We also realize the $s_{(\mu|\lambda)}$ as stable-limits of certain key polynomials (Proposition \ref{almost sym schur from key}). This immediately shows that the $s_{(\mu|\lambda)}$ are a $\mathbb{Q}$-basis for the space of almost symmetric functions with rational coefficients (Corollary \ref{almost sym schur are basis}). Using the Demazure character formula, we find a representation-theoretic interpretation for the $s_{(\mu|\lambda)}$ as limits of characters of certain representations $\mathcal{V}^{(n)}(\mu|\lambda)$ of parabolic subgroups in type $\GL$ (Lemma \ref{rep theory of almost sym schur}). From this we prove that the monomial-Schur expansions of the $s_{(\mu|\lambda)}$ have non-negative integer coefficients counting the multiplicities of certain irreducible representations for Levi subgroups in type $\GL$ in the modules $\mathcal{V}^{(n)}(\mu|\lambda)$ (Theorem \ref{rep theory for schur expansion}).

\subsection{Acknowledgements}

The author would like to thank their advisor Monica Vazirani for all of her continued invaluable guidance throughout the author's graduate school career at UC Davis. The author would also like to thank Nicolle Gonz\'{a}lez and Daniel Orr for helpful discussions about Demazure characters and Macdonald polynomials.

\section{Definitions and Notations}

\subsection{Basic Combinatorics}

\begin{defn}
 In this paper, a \textbf{\textit{composition}} will refer to a finite tuple $\mu = (\mu_1,\ldots,\mu_n)$ of non-negative integers. We allow for the empty composition $\emptyset$ with no parts. We will let $\Comp$ denote the set of all compositions. The length of a composition $\mu = (\mu_1,\ldots,\mu_n)$ is $\ell(\mu) = n$ and the size of the composition is $| \mu | = \mu_1+\ldots+\mu_n$. As a convention we will set $\ell(\emptyset) = 0$ and $|\emptyset| = 0.$ We say that a composition $\mu$ is \textbf{\textit{reduced}} if $\mu = \emptyset$ or $\mu_{\ell(\mu)} \neq 0.$ We will let $\Compred$ denote the set of all reduced compositions. Given two compositions $\mu = (\mu_1,\ldots,\mu_n)$ and $\beta = (\beta_1,\ldots,\beta_m)$, define $\mu * \beta = (\mu_1,\ldots,\mu_n,\beta_1,\ldots,\beta_m)$. A \textbf{\textit{partition}} is a composition $\lambda = (\lambda_1,\ldots,\lambda_n)$ with $\lambda_1\geq \ldots \geq \lambda_n \geq 1$. Note that vacuously we allow for the empty partition $\emptyset.$ We denote the set of all partitions by $\Par$. We denote by $\Sigma$ the set of all pairs $(\mu|\lambda)$ with $\mu \in \Compred$ and $\lambda \in \Par.$
 
 We denote by $\sort(\mu)$ the partition obtained by ordering the nonzero elements of $\mu$ in weakly decreasing order. We define $\rev(\mu)$ to be the composition obtained by reversing the order of the elements of $\mu$. 
 
 We will in a few instances use the notation $\mathbbm{1}(p)$ to denote the value $1$ if the statement p is true and $0$ otherwise.
 \end{defn}

\begin{defn}\label{symmetric group def}
    The symmetric group $\mathfrak{S}_n$ is defined as the set of bijective maps $\sigma: [n] \rightarrow [n]$ with multiplication given by function composition where $[n]:= \{1,\ldots, n\}$. For $1 \leq i \leq n-1$ we will write $s_i$ for the transposition swapping $i,i+1$ and fixing everything else. For $\sigma \in \mathfrak{S}_n$ the \textbf{\textit{length}} of $\sigma$, $\ell(\sigma)$, is defined to be the minimal number of $s_i$ required to express $\sigma$, i.e. $\sigma = s_{i_1}\cdots s_{i_r}.$ For any $\mu = (\mu_1,\ldots, \mu_r)$ with $\mu_i \geq 1$ and $\mu_1+\ldots + \mu_r = n$ we define the \textbf{\textit{Young subgroup}} $\mathfrak{S}_{\mu}$ to be the group generated by the $s_i$ with $i \in \{\mu_1+\ldots + \mu_{j-1} +1,\ldots,\mu_1+\ldots + \mu_{j-1}+\mu_{j} \}$ for some $0 \leq j \leq r.$
\end{defn}

We have the following alternative presentation of the symmetric group $\mathfrak{S}_n.$

\begin{prop}[Coxeter Presentation]
    The symmetric group $\mathfrak{S}_n$ is generated by elements $s_1,\ldots, s_{n-1}$ subject to the relations:
    \begin{itemize}
        \item $s_i^2 = 1$
        \item $s_{i}s_{i+1}s_i = s_{i+1}s_is_{i+1}$
        \item $s_is_j = s_j s_i$ for $|i-j| > 1.$
    \end{itemize}
\end{prop}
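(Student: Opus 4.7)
The plan is to give the standard two-sided comparison: show that the honest transpositions $s_i \in \mathfrak{S}_n$ satisfy the three relations and that they generate $\mathfrak{S}_n$, which produces a surjective homomorphism from the abstract Coxeter group onto $\mathfrak{S}_n$; then bound the size of the abstract group from above by $n!$ to force this surjection to be an isomorphism.

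For the first half, checking $s_i^2 = 1$, the braid relation, and the far commutation inside $\mathfrak{S}_n$ is a direct computation on the three (or four) relevant letters, since both sides fix everything outside $\{i, i+1\}$ or $\{i, i+1, i+2\}$. Generation is the familiar observation that any $\sigma \in \mathfrak{S}_n$ can be sorted by adjacent transpositions, e.g.\ by an explicit bubble-sort induction on $\ell(\sigma)$: given $\sigma \neq e$, pick any $i$ with $\sigma(i) > \sigma(i+1)$ and replace $\sigma$ by $\sigma s_i$, which has strictly smaller number of inversions. Letting $W_n$ denote the abstract group presented by the three relations, this yields a surjection $\pi \colon W_n \twoheadrightarrow \mathfrak{S}_n$, so $|W_n| \geq n!$.

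For the upper bound $|W_n| \leq n!$, the plan is to argue by induction on $n$. Let $W_{n-1} \subseteq W_n$ be the subgroup generated by $s_1, \ldots, s_{n-2}$; by the inductive hypothesis $|W_{n-1}| \leq (n-1)!$. I will show that every element of $W_n$ can be written in the form $w \cdot t$ where $w \in W_{n-1}$ and $t \in T_n := \{1,\ s_{n-1},\ s_{n-1}s_{n-2},\ \ldots,\ s_{n-1}s_{n-2}\cdots s_1\}$, a set of $n$ suffixes; this gives $|W_n| \leq n \cdot (n-1)! = n!$. Concretely one shows that $T_n \cup W_{n-1} \cdot T_n$ is stable under right multiplication by every generator $s_i$: for $i < n-1$ one uses the far-commutation relations to push $s_i$ past the trailing tail of $s_{n-1}s_{n-2}\cdots s_{i+1}$ and absorb an $s_i$ into the $W_{n-1}$-factor, and for $i = n-1$ one uses $s_{n-1}^2 = 1$ together with the braid relation $s_{n-1}s_{n-2}s_{n-1} = s_{n-2}s_{n-1}s_{n-2}$ to rewrite $(s_{n-1}\cdots s_k)s_{n-1}$ as $s_{n-2}(s_{n-1}\cdots s_k)$ when $k \leq n-2$, again shifting the extra generator into $W_{n-1}$. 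Since the identity of $W_n$ lies in this set, the set equals all of $W_n$.

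The main obstacle is this last coset-stability check, which is where all three relations genuinely get used; the temptation to wave at it should be resisted, since the statement that the braid and commutation relations suffice to reduce any word to the asserted normal form is exactly the content of the proposition. Once the bound $|W_n| \leq n!$ is in hand, combined with $|W_n| \geq n!$ from surjectivity of $\pi$, we conclude $\pi$ is an isomorphism, which is the claim.
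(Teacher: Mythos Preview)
The paper does not prove this proposition; it is stated as a classical fact and left without proof. Your proposal is the standard argument and is correct in outline: the surjection $W_n \twoheadrightarrow \mathfrak{S}_n$ is immediate, and the normal-form bound $|W_n|\le n!$ via the coset representatives $T_n=\{1,s_{n-1},s_{n-1}s_{n-2},\ldots,s_{n-1}\cdots s_1\}$ is exactly the right mechanism.

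One small point worth tightening when you write it out in full: your description of the case $i<n-1$ (``push $s_i$ past the trailing tail of $s_{n-1}\cdots s_{i+1}$ and absorb an $s_i$ into the $W_{n-1}$-factor'') glosses over the subcases. If the tail is $s_{n-1}\cdots s_k$ you really need to treat $i<k-1$ (pure commutation), $i=k-1$ (the tail lengthens), $i=k$ (the tail shortens via $s_k^2=1$), and $k<i<n-1$ (a braid move $s_is_{i-1}s_i=s_{i-1}s_is_{i-1}$ plus commutations yields $(s_{n-1}\cdots s_k)s_i=s_{i-1}(s_{n-1}\cdots s_k)$). You clearly know this, since you flag the coset-stability check as the genuine content, but the phrase ``far-commutation relations'' alone undersells that the braid relation is also needed in this case, not only in the $i=n-1$ case.
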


In line with the conventions in \cite{haglund2007combinatorial} we define the Bruhat order on the type $GL_n$ weight lattice $\mathbb{Z}^{n}$ as follows. 

\begin{defn}
Let $e_1,...,e_n$ be the standard basis of $\mathbb{Z}^n$ and let $\alpha \in \mathbb{Z}^n$. We define the\textbf{\textit{ Bruhat ordering}} on $\mathbb{Z}^n$, written simply by $<$, by first defining cover relations for the ordering and then taking their transitive closure. If $i<j$ such that $\alpha_i < \alpha_j$ then we say $\alpha > (ij)(\alpha)$ and additionally if $\alpha_j - \alpha_i > 1$ then $(ij)(\alpha) > \alpha + e_i - e_j$ where $(ij)$ denotes the transposition swapping $i$ and $j.$
\end{defn}

 \subsection{Polynomials}
 Throughout this paper the variables $q$ and $t$ are assumed to be commuting free variables.
 \begin{defn}
    Define $\sP_n:= \mathbb{Q}(q,t)[x_1^{\pm 1},\ldots, x_n^{\pm 1}]$ for the space of Laurent polynomials in $n$ variables over $\mathbb{Q}(q,t)$ and define $\sP_n^{+}:= \mathbb{Q}(q,t)[x_1,\ldots, x_n]$ for the subspace of polynomials. We define algebra homomorphisms $\Xi^{(n)}: \sP_{n+1}^{+} \rightarrow \sP_{n}^{+}$ by $$\Xi^{(n)}(x_1^{a_1}\cdots x_n^{a_n}x_{n+1}^{a_{n+1}}) = \mathbbm{1}(a_{n+1} = 0) x_1^{a_1}\cdots x_n^{a_n}.$$ The symmetric group $\mathfrak{S}_n$ acts naturally on $\sP_n$ by algebra automorphisms via 
    $$\sigma(f(x_1,\ldots, x_n)) = f(x_{\sigma(1)},\ldots, x_{\sigma(n)}).$$ 
\end{defn}

We will need to consider the action of the finite Hecke algebras in type $\GL$ on polynomials.

\begin{defn}\label{finite hecke alg defn}
    Define the finite Hecke algebra $\sH_n$ to be the $\mathbb{Q}(q,t)$-algebra generated by $T_1,\ldots, T_{n-1}$ subject to the relations 
    \begin{itemize}
        \item $(T_i-1)(T_i+t) = 0$ for $1\leq i \leq n-1$
        \item $T_iT_{i+1}T_1 = T_{i+1}T_iT_{i+1}$ for $1\leq i \leq n-2$
        \item $T_iT_j = T_jT_i$ for $|i-j| > 1.$
    \end{itemize}

    The polynomial representation of $\sH_n$ on $\sP_n^{+}$ is determined by the following action of the $T_i:$

    $$T_i(f):= s_i(f)+(1-t)x_i\frac{f-s_i(f)}{x_i-x_{i+1}}.$$
    
    For $0 \leq k \leq n$ let $\epsilon_k^{(n)} \in \sH_n$ denote the (normalized) trivial idempotent given by 
    $$\epsilon_k^{(n)}:= \frac{1}{[n-k]_{t}!}\sum_{\sigma \in \mathfrak{S}_{(1^k,n-k)}} t^{{n-k\choose 2} - \ell(\sigma)} T_{\sigma}.$$ Here
    $[m]_{t}!:= \prod_{i=1}^{m}(\frac{1-t^i}{1-t}).$ 
\end{defn}

We now review an important family of polynomials.

\begin{defn} \label{defn3}
The \textbf{\textit{non-symmetric Macdonald polynomials}} (for $GL_n$) are a family of Laurent polynomials $E_{\mu} \in \mathscr{P}_n$ for $\mu \in \mathbb{Z}^n$ uniquely determined by the following:

\begin{itemize}
    \item Triangularity: Each $E_{\mu}$ has a monomial expansion of the form $E_{\mu} = x^{\mu} + \sum_{\lambda < \mu} a_{\lambda}x^{\lambda}$

    \item Weight Vector: Each  $E_{\mu}$ is a weight vector for the \textbf{\textit{Cherednik operators}} $Y_1^{(n)},\ldots,Y_n^{(n)}$.
\end{itemize}
\end{defn}

We refer the reader to \cite{C_2001} for a review of DAHA and the Cherednik operators $Y_i.$ Importantly, the set $\{E_{\mu} | \mu \in \mathbb{Z}^n \}$ is a basis for $\sP_n$ with distinct $Y^{(n)}$ weights. For $\mu \in \mathbb{Z}^n$, $E_{\mu}$ is homogeneous with degree $\mu_1+\ldots +\mu_n$. Further, the set of $E_{\mu}$ corresponding to $\mu \in \mathbb{Z}_{\geq 0}^n$ gives a basis for $\mathscr{P}_n ^{+}$. 

\subsection{Combinatorial Formula for Non-symmetric Macdonald Polynomials}

In \cite{haglund2007combinatorial}, Haglund, Haiman, and Loehr give an explicit monomial expansion formula for the non-symmetric Macdonald polynomials in terms of the combinatorics of \textbf{\textit{non-attacking labellings}} of certain box diagrams corresponding to compositions which we will now review.

\begin{defn}\cite{haglund2007combinatorial} \label{HHL defn}
For a composition $\mu = (\mu_1,\dots,\mu_n)$ define the column diagram of $\mu$ as 
$$dg'(\mu):= \{(i,j)\in \mathbb{N}^2 : 1\leq i\leq n, 1\leq j \leq \mu_i \}.$$ This is represented by a collection of boxes in positions given by $dg'(\mu)$. The augmented diagram of $\mu$ is given by 
$$\widehat{dg}(\mu):= dg'(\mu)\cup\{(i,0): 1\leq i\leq n\}.$$
Visually, to get $\widehat{dg}(\mu)$ we are adding a bottom row of boxes on length $n$ below the diagram $dg'(\mu)$. Given $u = (i,j) \in dg'(\mu)$ define the following:
\begin{itemize}
    \item $\leg(u) := \{(i,j') \in dg'(\mu): j' > j\}$
    \item $\arm^{\mathleft}(u) := \{(i',j) \in dg'(\mu): i'<i, \mu_{i'} \leq \mu_i\} $
    \item $\arm^{\mathright}(u):= \{(i',j-1) \in \widehat{dg}(\mu): i'>i, \mu_{i'}<\mu_{i}\}$
    \item $\arm(u) := \arm^{\mathleft}(u) \cup \arm^{\mathright}(u)$
    \item $\mathleg(u):= |\leg(u)| = \mu_i -j$
    \item $a(u) := |\arm(u)|.$
\end{itemize}
A filling of $\mu$ is a function $\sigma: dg'(\mu) \rightarrow \{1,...,n\}$ and given a filling there is an associated augmented filling $\widehat{\sigma}: \widehat{dg}(\mu) \rightarrow \{1,...,n\}$ extending $\sigma$ with the additional bottom row boxes filled according to $\widehat{\sigma}((j,0)) = j$ for $j = 1,\dots,n$. Distinct lattice squares $u,v \in \mathbb{N}^2$ are said to attack each other if one of the following is true:
\begin{itemize}
\item $u$ and $v$ are in the same row 
\item $u$ and $v$ are in consecutive rows and the box in the lower row is to the right of the box in the upper row.
\end{itemize}
A filling $\sigma: dg'(\mu) \rightarrow \{1,\dots,n\}$ is non-attacking if $\widehat{\sigma}(u) \neq \widehat{\sigma}(v)$ for every pair of attacking boxes $u,v \in \widehat{dg}(\mu).$
For a box $u= (i,j)$ let $d(u) = (i,j-1)$ denote the box just below $u$. Given a filling $\sigma:dg'(\mu)\rightarrow \{1,\dots,n\}$, a descent of $\sigma$ is a box $u \in dg'(\mu)$ such that $\widehat{\sigma}(u) > \widehat{\sigma}(d(u)).$
Set $\Des(\widehat{\sigma})$ to be the set of descents of $\widehat{\sigma}$ and define 
$$\maj(\widehat{\sigma}):= \sum_{u \in \Des(\widehat{\sigma})} (\mathleg(u)+1).$$
The reading order on the diagram $\widehat{dg}(\mu)$ is the total ordering on the boxes of $\widehat{dg}(\mu)$ row by row, from top to bottom, and from right to left within each row. If $\sigma: dg'(\mu) \rightarrow \{1,\dots,n\}$ is a filling, an inversion of $\widehat{\sigma}$ is a pair of attacking boxes $u,v \in \widehat{dg}(\mu)$ such that $u < v$ in reading order and $\widehat{\sigma}(u) > \widehat{\sigma}(v).$ Set $\Inv(\widehat{\sigma})$ to be the set of inversions of $\widehat{\sigma}$. Define the statistics 
\begin{itemize}
    \item $\inv(\widehat{\sigma}):= |\Inv(\widehat{\sigma})| -|\{i<j: \mu_i \leq \mu_j\}| - \sum_{u \in \Des(\widehat{\sigma})} a(u)$
    \item $\coinv(\widehat{\sigma}):= \left( \sum_{u \in dg'(\mu)} a(u) \right) -\inv(\widehat{\sigma}).$
    \end{itemize}
Lastly, for a filling $\sigma:dg'(\mu) \rightarrow \{1,\dots,n\}$ set $$x^{\sigma}:= x_1^{|\sigma^{-1}(1)|}\cdots x_n^{|\sigma^{-1}(n)|}.$$
\end{defn}

The Haglund-Haiman-Loehr combinatorial formula for non-symmetric Macdonald polynomials can now be stated.

\begin{thm}\cite{haglund2007combinatorial} \label{HHL}
For a composition $\mu$ with $\ell(\mu) = n$ the following holds:
    $$E_{\mu} = \sum_{\substack{\sigma: \mu \rightarrow [n]\\ \text{non-attacking}}} x^{\sigma}q^{\maj(\widehat{\sigma})}t^{\coinv(\widehat{\sigma})} \prod_{\substack{u \in dg'(\mu) \\ \widehat{\sigma}(u) \neq \widehat{\sigma}(d(u))}} \left( \frac{1-t}{1-q^{\mathleg(u)+1}t^{a(u)+1}} \right). $$
\end{thm}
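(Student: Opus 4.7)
The plan is to verify that the right-hand side, which we will denote $\widetilde{E}_\mu$, agrees with $E_\mu$ by checking that it satisfies the characterizing properties of the non-symmetric Macdonald polynomials. Rather than verifying triangularity plus the $Y_i^{(n)}$-eigenvector condition directly (the eigenvector check being quite intricate), I would instead use the \emph{Knop-Sahi recursions}, which characterize the family $\{E_\mu\}$ uniquely by the base case $E_{0^n} = 1$ together with (i) a cyclic shift relation expressing $E_{(\mu_2,\ldots,\mu_n,\mu_1+1)}$ in terms of $E_\mu$, and (ii) a Hecke-type intertwining relation connecting $T_i E_\mu$ to $E_{s_i\mu}$ when $\mu_i < \mu_{i+1}$. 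It then suffices to show the combinatorial $\widetilde{E}_\mu$ obeys the same three relations, and uniqueness forces $\widetilde{E}_\mu = E_\mu$.

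First I would handle the base case and triangularity. For $\mu = 0^n$ the empty column diagram has only the empty filling, contributing $1$, so $\widetilde{E}_{0^n} = 1$. For general $\mu$ the \emph{canonical filling} $\sigma_0$ that assigns value $i$ to every box of column $i$ is non-attacking (the augmented bottom row carries $i$ in column $i$, and $\sigma_0$ extends this consistently upward), has no descents, and yields $x^\mu$ with coefficient $1$ after the baseline $|\{i<j: \mu_i\leq\mu_j\}|$ subtraction in $\inv(\widehat{\sigma})$. A local exchange argument using the non-attacking condition then shows any other non-attacking filling contributes a monomial $x^\lambda$ with $\lambda < \mu$ in Bruhat order. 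Next, the cyclic shift relation can be verified via an explicit statistic-preserving bijection: given a non-attacking filling of $(\mu_2,\ldots,\mu_n,\mu_1+1)$, peel off its rightmost column, strip its bottom entry, and reattach it as a new leftmost column of $\mu$; the transformation of $\maj, \coinv$, the arm/leg data, and the product of denominators matches the required scalar factor of $q^{\mu_1} x_1$. This is bookkeeping-intensive but mechanical.

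The main obstacle is verifying the Hecke intertwining relation. Acting by $T_i$ (which involves $s_i$ together with a divided-difference correction) on the sum over non-attacking fillings of $\mu$ should produce the corresponding sum for $s_i\mu$, up to a multiple of $\widetilde{E}_\mu$ with coefficient matching the Knop-Sahi structure constant $(1-t)(1-q^{\mu_{i+1}-\mu_i}t^{\mathleg+1})/(1-q^{\mu_{i+1}-\mu_i}t^{\mathleg+2})$. The delicacy is that simply swapping the entries in rows $i$ and $i+1$ usually breaks the non-attacking condition, so one must cancel contributions from offending fillings via a sign-reversing involution that pairs ``bad'' fillings of $\mu$ with partners in $s_i\mu$, while the surviving fillings must be shown to reassemble into the advertised linear combination. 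The denominator factors $1-q^{\mathleg(u)+1}t^{a(u)+1}$ appear naturally once one resums the geometric series produced by iterating the involution on the column whose leg/arm statistics change under the row swap. Once this combinatorial identity is established alongside the cyclic shift and base case, the Knop-Sahi uniqueness theorem completes the identification $\widetilde{E}_\mu = E_\mu$.
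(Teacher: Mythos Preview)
The paper does not give its own proof of this theorem; it is quoted as a result from \cite{haglund2007combinatorial} and used as a black box. So there is no ``paper's proof'' to compare against.

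Your outline is in fact a sketch of the original Haglund--Haiman--Loehr argument: verify the combinatorial sum satisfies the Knop--Sahi recursions (base case, cyclic shift via the map $\mu \mapsto (\mu_n+1,\mu_1,\ldots,\mu_{n-1})$ and multiplication by $q^{\mu_1}x_1$, and the $T_i$-intertwiner), then invoke uniqueness. The strategy is correct. A few comments on accuracy and completeness. First, your triangularity paragraph is not actually needed once you commit to Knop--Sahi, since those recursions already determine $E_\mu$ uniquely starting from $E_{0^n}=1$; you can drop it. Second, your description of the cyclic-shift bijection has the direction reversed relative to the usual convention (the HHL recursion promotes the \emph{last} entry and moves it to the front, i.e.\ $E_{(\mu_n+1,\mu_1,\ldots,\mu_{n-1})} = q^{\mu_n}x_1 E_\mu(x_2,\ldots,x_n,q^{-1}x_1)$), so the column manipulation you describe should be checked against that form. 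Third, and most importantly, the intertwiner step is where essentially all of the work lies, and your sketch (``sign-reversing involution,'' ``resumming geometric series'') is at the level of a plan rather than a proof. In the actual HHL paper this step occupies the bulk of the argument and requires a careful case analysis of how $\coinv$ and the non-attacking condition interact with swapping values $i \leftrightarrow i+1$; the involution is not on single fillings but on pairs, and the cancellation is delicate. What you have written would not be accepted as a proof of that step, only as an indication that you know where the difficulty is.
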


We may better understand the statistic $\coinv$ through the next definition.

\begin{defn}\cite{haglund2007combinatorial}
    Let $\sigma: \mu \rightarrow [n]$ be a non-attacking labelling. A \textbf{\textit{co-inversion triple}} is a triple of boxes $(u,v,w)$ in the diagram $\widehat{dg}(\mu)$ of one of the following two types
\begin{center}
    Type 1: \begin{ytableau}
        u & \none & \none \\
        w &   \none   & v \\
        \end{ytableau} ~~~~ Type 2: \begin{ytableau}
        v & \none & u \\
        \none &   \none   & w \\
        \end{ytableau}
\end{center}

that satisfy the following criteria:
\begin{itemize}
    \item in Type 1 the column containing $u$ and $w$ is strictly taller than the column containing $v$
    \item in Type 2 the column containing $u$ and $w$ is weakly taller than the column containing $v$
    \item in either Type 1 or Type 2 $\widehat{\sigma}(u)< \widehat{\sigma}(v)< \widehat{\sigma}(w)$ or $\widehat{\sigma}(v)< \widehat{\sigma}(w)< \widehat{\sigma}(u)$ or $\widehat{\sigma}(w)< \widehat{\sigma}(u)< \widehat{\sigma}(v).$
\end{itemize}

Informally, in Type 1 we require the entries to strictly increase clockwise and in Type 2 we require the entries to strictly increase counterclockwise.
\end{defn}

Co-inversion triples are important because they have the same count as the complicated $\coinv$ statistic from Definition \ref{HHL defn}.

\begin{lem}\cite{haglund2007combinatorial}
    For a non-attacking labelling $\sigma: \mu \rightarrow [n]$, 
    $\coinv(\widehat{\sigma})$ equals the number of co-inversion triples of $\widehat{\sigma}.$
\end{lem}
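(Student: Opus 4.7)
The plan is to reinterpret $\coinv(\widehat{\sigma})$ combinatorially and match it term-by-term with a count of co-inversion triples. Expanding the definition of $\inv(\widehat{\sigma})$ one obtains
\[\coinv(\widehat{\sigma}) = \sum_{u \in dg'(\mu)} a(u) + \sum_{u \in \Des(\widehat{\sigma})} a(u) + |\{i<j : \mu_i \leq \mu_j\}| - |\Inv(\widehat{\sigma})|,\]
so it suffices to show that the right-hand side counts co-inversion triples of $\widehat{\sigma}$.

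First, the sum $\sum_{u} a(u)$ enumerates pairs $(u,v)$ with $u \in dg'(\mu)$ and $v \in \arm(u)$; setting $w := d(u)$, the triple $(u,v,w)$ fits the Type 1 picture if $v \in \arm^{\mathright}(u)$ and the Type 2 picture if $v \in \arm^{\mathleft}(u)$. Call such $(u,v,w)$ a \emph{potential triple}. A direct inspection of the two pictures shows that $\{u,v\}$ and $\{v,w\}$ are always attacking pairs (while $\{u,w\}$ is never attacking), and that $u < v < w$ in reading order. The non-attacking hypothesis on $\widehat{\sigma}$ then forces $\widehat{\sigma}(u) \neq \widehat{\sigma}(v)$ and $\widehat{\sigma}(v) \neq \widehat{\sigma}(w)$, although $\widehat{\sigma}(u) = \widehat{\sigma}(w)$ remains possible.

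The heart of the proof is the following pointwise identity, valid for every potential triple:
\[\mathbbm{1}\bigl((u,v,w)\ \text{is a co-inv triple}\bigr) = 1 + \mathbbm{1}\bigl(u \in \Des(\widehat{\sigma})\bigr) - \mathbbm{1}\bigl((u,v) \in \Inv(\widehat{\sigma})\bigr) - \mathbbm{1}\bigl((v,w) \in \Inv(\widehat{\sigma})\bigr).\]
I would verify this by enumerating the six strict orderings of $(\widehat{\sigma}(u), \widehat{\sigma}(v), \widehat{\sigma}(w))$, noting that exactly the three cyclically increasing orderings listed in the definition yield co-inversion triples, and separately checking the degenerate case $\widehat{\sigma}(u) = \widehat{\sigma}(w)$ (in which $u$ is not a descent and exactly one of $(u,v)$, $(v,w)$ lies in $\Inv(\widehat{\sigma})$, so both sides equal $0$). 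Summing the identity over all potential triples yields
\[\#\{\text{co-inv triples of }\widehat{\sigma}\} = \sum_{u} a(u) + \sum_{u \in \Des} a(u) - S_1 - S_2,\]
where $S_1$ and $S_2$ are the totals of the two $\Inv$-indicators over all potential triples.

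It then remains to establish $S_1 + S_2 = |\Inv(\widehat{\sigma})| - |\{i<j : \mu_i \leq \mu_j\}|$, which is the main obstacle. For this I would classify attacking pairs $(p,q)$ in $\widehat{dg}(\mu)$ geometrically --- same row, consecutive rows with the lower box to the right, or involving the augmented bottom row --- and show which arise as a $(u,v)$ or a $(v,w)$ side of some potential triple. The column-height inequalities $\mu_{i'} \leq \mu_i$ and $\mu_{i'} < \mu_i$ built into $\arm^{\mathleft}$ and $\arm^{\mathright}$ will capture certain attacking pairs and exclude others; the excluded pairs should correspond exactly to column pairs $i<j$ with $\mu_i \leq \mu_j$, which is where the correction term arises. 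The delicacy lies in verifying this matching without double-counting, particularly around pairs involving the augmented bottom row where both $(u,v)$- and $(v,w)$-style contributions can interact, and this requires a careful column-by-column bookkeeping argument.
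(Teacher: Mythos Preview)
The paper does not supply its own proof of this lemma; it is stated with a citation to Haglund--Haiman--Loehr and left at that. So there is nothing in the present paper to compare against directly.

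Your approach is the standard one and is correct. The pointwise identity checks out in all cases (the six strict orderings plus the degenerate $\widehat{\sigma}(u)=\widehat{\sigma}(w)$ case), and the final bookkeeping step that you flag as the ``main obstacle'' is in fact cleaner than you suggest. Two remarks that would tighten your write-up:

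\begin{itemize}
\item There is no double-counting in $S_1+S_2$. A same-row attacking pair appears as a Type~2 $(u,v)$-side exactly when the right column is weakly taller, and as a Type~1 $(v,w)$-side exactly when the left column is strictly taller; these conditions are complementary. The analogous dichotomy holds for consecutive-row pairs. So every attacking pair that is a side of \emph{some} potential triple is a side of exactly one.
\item Every attacking pair above the basement row \emph{is} a side of some potential triple (the column-height dichotomy together with the box-existence constraints forces this). The only inversions of $\widehat{\sigma}$ that are not sides are basement pairs $(c_1,0),(c_2,0)$ with $c_1>c_2$ and $\mu_{c_1}\geq \mu_{c_2}$; since the basement filling is $\widehat{\sigma}(j,0)=j$, every basement pair is an inversion, and the uncovered ones number exactly $|\{i<j:\mu_i\leq\mu_j\}|$.
\end{itemize}

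With those two observations the identity $S_1+S_2=|\Inv(\widehat{\sigma})|-|\{i<j:\mu_i\leq\mu_j\}|$ is immediate, and your argument is complete.
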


\begin{example}
We finish this subsection with a visual example of a non-attacking filling and its associated statistics. Below is the augmented filling $\widehat{\sigma}$ of a non-attacking filling $\sigma: (3,2,0,1,0,0) \rightarrow [6]$ pictured as labels inside the boxes of $\widehat{dg}(3,2,0,1,0,0).$

\begin{center}

\ytableausetup{centertableaux, boxframe= normal, boxsize= 2em}
\begin{ytableau}
6 & \none & \none & \none & \none & \none \\
4 &   1   & \none & \none & \none & \none \\
1 &   2   & \none &   3   & \none & \none \\
1 &   2   &   3   &   4   &   5   &   6   \\
\end{ytableau}

\end{center}

Let $u$ be the column 1 box of $\widehat{dg}(3,2,0,1,0,0)$ filled with a $4$ in the above diagram. Notice that $u$ is a descent box of $\widehat{\sigma}$ as $4$ is larger than the label $1$ of the box $d(u)$ just below $u.$ Further, we see that $a(u) = 2$ and $\mathleg(u) = 1$. Considering the diagram as a whole now we see that $x^{\sigma} = x_1^{2}x_2x_3x_4x_6$, $\maj(\widehat{\sigma}) = 3$, $|\Inv(\widehat{\sigma})| = 21$, $\inv(\widehat{\sigma}) = 14$, and $\coinv(\widehat{\sigma}) = 1.$ The contribution of this non-attacking labelling to the HHL formula for $E_{(3,2,0,1,0,0)} \in \sP_{6}^{+}$ is 
$$x_1^2x_2x_3x_4x_6 q^{3}t^{1} \left( \frac{1-t}{1-q^{1}t^3} \right)\left( \frac{1-t}{1-q^{1}t^2} \right)\left( \frac{1-t}{1-q^{2}t^{3}} \right)\left( \frac{1-t}{1-q^{1}t^2} \right).$$
\end{example}

\subsection{Symmetric Functions}

 \begin{defn}
 Define the \textbf{\textit{ring of symmetric functions}} $\Lambda$ to be the subalgebra of the inverse limit of the symmetric polynomial rings $\mathbb{Q}(q,t)[x_1,\ldots,x_n]^{\mathfrak{S}_n}$ with respect to the quotient maps $\Xi^{(n)}$ consisting of those elements with bounded $x$-degree. For $i \geq 1$ define the $i$-th \textbf{\textit{power sum symmetric function}} by
 $$p_i = x_1^i + x_2^i + \dots .$$ It is a classical result that $\Lambda$ is isomorphic to $\mathbb{Q}(q,t)[p_1,p_2,\dots]$. For any expression $G = a_1g^{\mu_1} + a_2g^{\mu_2} +\dots$ with rational scalars $a_i \in \mathbb{Q}$ and distinct monomials $g^{\mu_i}$ in a set of algebraically independent commuting free variables $\{g_1,g_2,\dots\}$ the \textbf{\textit{plethystic evaluation}} of $p_i$ at the expression $G$ is defined to be $$p_i[G] := a_1g^{i\mu_1} +a_2g^{i \mu_2}+ \dots .$$ Note that $g_i$ are allowed to be $q$ or $t.$ Here we are using the convention that $i\mu = (i\mu_1,\ldots, i \mu_r)$ for $\mu = (\mu_1,\cdots, \mu_r).$ The definition of plethystic evaluation on power sum symmetric functions extends to all symmetric functions $F \in \Lambda$ by requiring $F \rightarrow F[G]$ be a $\mathbb{Q}(q,t)$-algebra homomorphism. Note that for $F \in \Lambda$, $F = F[x_1+x_2+\ldots]$ and so we will often write $F = F[X]$ where $X:= x_1+ x_2 + \ldots.$ For a partition $\lambda$ define the \textbf{\textit{monomial symmetric function}} $m_\lambda$ by 
 $$m_\lambda := \sum_{\mu} x^{\mu}$$
 where we range over all distinct monomials $x^{\mu}$ such that $\sigma(\mu) = \lambda$ for some permutation $\sigma$. For $n \geq 0$ define the \textbf{\textit{complete homogeneous symmetric function}} $h_n$ by 
 $$h_n:= \sum_{|\lambda|= n} m_\lambda .$$ For $\lambda \in \Par$ the Schur functions are given as 
 $$s_{\lambda} = \sum_{T \in \SSYT(\lambda)} x^{T}$$ where $\SSYT(\lambda)$ denotes the set of semi-standard Young tableaux of shape $\lambda.$
 We can extend plethysm to $\mathbb{Q}(q,t)[[p_1,p_2,\dots]]$.  The \textbf{\textit{plethystic exponential}} is defined to be the element of $\mathbb{Q}(q,t)[[p_1,p_2,\dots]]$ given by 
 $$\Exp[X]:= \sum_{n \geq 0} h_n[X].$$ 
 \end{defn}

We will need to consider the following operators.

\begin{defn}\label{jing vertex op}
    For $n \geq 0$ define the \textbf{\textit{Jing vertex operator}} $\mathscr{B}_n \in \End_{\mathbb{Q}(q,t)}(\Lambda)$ by
    $$\mathscr{B}_n[F] : = \langle z^n \rangle F[X-z^{-1}]\Exp[(1-t)zX].$$
    Here $\langle z^n \rangle $ denotes the operator which extracts the coefficient of $z^n$ of any Laurent series in $z$.
\end{defn}

\subsection{Almost Symmetric Functions}

The following ring was studied by Ion-Wu in their paper \cite{Ion_2022}.

\begin{defn}\cite{Ion_2022} \label{defn5}
  For $k \geq 0$ define the ring $\mathscr{P}(k)^{+} := \mathbb{Q}(q,t)[x_1,\ldots,x_k]\otimes \Lambda[\mathfrak{X}_{k}]$ where $\mathfrak{X}_{k}:= x_{k+1}+x_{k+2}+\ldots.$ The \textbf{\textit{ring of almost symmetric functions}} is given by $\mathscr{P}_{as}^{+} := \bigcup_{k\geq 0} \mathscr{P}(k)^{+}$. 
 \end{defn}

The ring $\sP_{as}^{+}$ is a free graded $\Lambda$-module with homogeneous basis given simply by the set of monomials $x^{\mu}$ with $\mu$ reduced. Therefore, $\sP_{as}^{+}$ has the homogeneous $\mathbb{Q}(q,t)$ basis given by all $x^{\mu}m_{\lambda}[X]$ ranging over $(\mu|\lambda) \in \Sigma$. Further, the dimension of the homogeneous degree d part of $\mathscr{P}(k)^{+}$ is equal to the number of pairs $(\mu|\lambda) \in \Sigma$ with $|\mu|+|\lambda| = d$ and $\ell(\mu) \leq k$.

We will need to consider a family of symmetrization operators on $\sP_{as}^{+}.$

\begin{lem}\label{idempotent lemma}\cite{MBWArxiv}
    For any $k \geq 0$ the sequence of partial symmetrization operators $( \epsilon_k^{(n)})_{n \geq k}$ converges (in the sense of Ion-Wu \cite{Ion_2022}) to a well defined map $\epsilon_k: \sP_{as}^{+} \rightarrow \sP(k)^{+}.$ These maps satisfy the following properties:
\begin{itemize}
        \item $\epsilon_k^{2} = \epsilon_k$
        \item $\epsilon_k T_i = T_i \epsilon_k = \epsilon_k$ for $ i \geq k+1$
        \item $T_i\epsilon_k = \epsilon_kT_i$ for $1 \leq i \leq k-1$
        \item $\epsilon_k \epsilon_{\ell} = \epsilon_{\min(k,\ell)}$
        \item For any $a_1,\ldots, a_{k+1} \geq 0$ and $F \in \Lambda$
        $$\epsilon_k(x_1^{a_1}\cdots x_k^{a_k}x_{k+1}^{a_{k+1}}F[\mathfrak{X}_{k+1}]) = x_1^{a_1}\cdots x_k^{a_k} \mathscr{B}_{a_{k+1}}(F)[\mathfrak{X}_k].$$
    \end{itemize}
\end{lem}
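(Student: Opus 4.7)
The plan is to reduce every assertion to a corresponding statement about the finite variable trivial idempotents $\epsilon_k^{(n)}$ in the Hecke algebra $\sH_n$, and then to pass to the Ion--Wu limit. First I would establish convergence of the sequence $(\epsilon_k^{(n)})_{n \geq k}$ on $\sP_{as}^+$. Since $\sP_{as}^+ = \bigcup_{m \geq 0} \sP(m)^+$, it suffices to check convergence on each $\sP(m)^+$ separately. When $k \geq m$, any $F \in \sP(m)^+$ is already symmetric in $x_{k+1}, x_{k+2}, \ldots$, so $\epsilon_k^{(n)}(F) = F$ for all $n \geq k$ and convergence is trivial. When $k < m$, using the homogeneous basis $\{x^\mu m_\lambda[\mathfrak{X}_{\ell(\mu)}]\}$ of $\sP_{as}^+$ I would reduce by linearity and induction on $m-k$ to the case of an element of the form $x_1^{a_1}\cdots x_k^{a_k} x_{k+1}^{a_{k+1}} F[\mathfrak{X}_{k+1}]$ with $F \in \Lambda$; for such elements convergence will fall out of the explicit computation used to verify the vertex-operator formula.

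For the first four bullet points I would deduce each from a corresponding identity in $\sH_n$ and then take limits. Idempotency and the relations $T_i \epsilon_k = \epsilon_k T_i = \epsilon_k$ for $i \geq k+1$ are standard properties of the trivial idempotent for the Young subgroup $\mathfrak{S}_{(1^k, n-k)}$ and follow directly from the defining quadratic relation $(T_i-1)(T_i+t)=0$ together with the absorption property $T_i \cdot 1 = 1$ in the polynomial representation. Commutation $T_i \epsilon_k = \epsilon_k T_i$ for $1 \leq i \leq k-1$ follows because each such $T_i$ commutes with every $T_j$ appearing in the defining sum for $\epsilon_k^{(n)}$ by the braid relation $T_i T_j = T_j T_i$ for $|i-j|>1$. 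For $\epsilon_k \epsilon_\ell = \epsilon_{\min(k,\ell)}$, after assuming without loss that $k \leq \ell$, in finite variables $\epsilon_\ell^{(n)}$ is the trivial idempotent for a Young subgroup contained in that of $\epsilon_k^{(n)}$, so $\epsilon_k^{(n)} \epsilon_\ell^{(n)} = \epsilon_k^{(n)}$ and taking the limit gives the claim.

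The main obstacle is the explicit identification with the Jing vertex operator in the final bullet point. My plan is to expand
$$\epsilon_k^{(n)}\!\left(x_1^{a_1}\cdots x_k^{a_k} x_{k+1}^{a_{k+1}} F(x_{k+1}, \ldots, x_n)\right)$$
and exploit the fact that the prefactor $x_1^{a_1}\cdots x_k^{a_k}$ is fixed by the symmetrizing Young subgroup while $F(x_{k+1},\ldots,x_n)$ is already symmetric in those variables. The remaining piece is the Hecke-theoretic partial symmetrization of $x_{k+1}^{a_{k+1}}$ against $F(x_{k+1},\ldots,x_n)$ over the Young subgroup on $\{k+1,\ldots,n\}$, which I would recognize as the finite variable truncation of a classical Hall--Littlewood creation operator. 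Passing $n \to \infty$ and identifying the output with the plethystic formula
$$\mathscr{B}_a[G] = \langle z^a \rangle G[X - z^{-1}]\Exp[(1-t)zX]$$
is the delicate part: one must carefully control the normalization coming from the prefactor $[n-k]_t!$ and match the $t$-weights arising from the polynomial representation of $\sH_n$ with the Cauchy-type factor $\Exp[(1-t)zX]$. Once this identification is carried out, convergence on $\sP(k+1)^+$ is automatic and the induction on $m-k$ closes, completing the proof of the lemma.
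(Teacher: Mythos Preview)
The paper does not supply a proof of this lemma: it is quoted with citation \cite{MBWArxiv} from the author's prior work and no argument appears in the present paper. So there is nothing here to compare your proposal against directly.

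That said, your outline is the natural one and matches how such results are typically established. The first four bullet points are indeed finite-rank Hecke algebra identities that survive the Ion--Wu limit; your justifications are correct, with the minor caveat that for $\epsilon_k\epsilon_\ell = \epsilon_{\min(k,\ell)}$ you only explicitly treat one order of the product (the other follows since the image of $\epsilon_k^{(n)}$, for $k\leq\ell$, consists of elements already invariant under the smaller Young subgroup and hence fixed by $\epsilon_\ell^{(n)}$). Your identification of the final bullet with a Hall--Littlewood creation operator is the right idea and is essentially Jing's original construction of the $\mathscr{B}_a$; carrying out the normalization and the $n\to\infty$ limit carefully is genuinely the crux, as you note, but this is worked out in the cited reference rather than here.
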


In previous work \cite{MBWArxiv} the author defined the following almost symmetric version of the non-symmetric Macdonald polynomials.

\begin{thm}\label{stable-limit non-sym MacD function defn}\cite{MBWArxiv}
For $(\mu|\lambda) \in \Sigma$ the limit
    $$\widetilde{E}_{(\mu|\lambda)}[x_1,x_2,\ldots;q,t]:= \lim_{n} \epsilon_{\ell(\mu)}^{(n)}\left(E_{\mu*\lambda*0^{n-(\ell(\mu)+\ell(\lambda))}}(x_1,\ldots,x_n;q,t)\right) $$
    exists (in the sense of Ion-Wu \cite{Ion_2022}). The set of $\{ \widetilde{E}_{(\mu|\lambda)}\}_{(\mu|\lambda) \in \Sigma}$ is a homogeneous basis for $\sP_{as}^{+}.$
\end{thm}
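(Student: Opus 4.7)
The proof splits into two claims: existence of the stable limit and the basis property.

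For existence, the plan is to exploit Lemma \ref{idempotent lemma}, particularly its fifth identity, which expresses a one-variable partial symmetrization via a Jing vertex operator. I would decompose $\epsilon_{\ell(\mu)}^{(n)}$ as an iterated composition of such one-step symmetrizations, peeling off one trailing variable at a time. Since $E_{\mu*\lambda*0^{n-\ell(\mu)-\ell(\lambda)}}$ is not literally of the product form $x^\nu \cdot F[\mathfrak{X}_{\ell(\mu)}]$, my approach is to first expand it via the HHL formula (Theorem \ref{HHL}), group the non-attacking fillings by the label content in the first $\ell(\mu)$ columns (the non-symmetric prefix), and then apply the Jing-operator identity to each group. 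This should yield a manageable expression for $\epsilon_{\ell(\mu)}^{(n)}(E_{\mu*\lambda*0^{n-\ell(\mu)-\ell(\lambda)}})$ as a weighted sum over non-attacking fillings of $\mu*\lambda*0^{n-\ell(\mu)-\ell(\lambda)}$ carrying Jing-operator-driven symmetric factors.

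The stabilization step is then combinatorial. Fix a target basis element $x^\nu m_\eta[\mathfrak{X}_{\ell(\mu)}] \in \sP(\ell(\mu))^{+}$. I would show that the coefficient of this element in the partial symmetrization stabilizes once $n$ is large enough, specifically once $n - \ell(\mu) - \ell(\lambda)$ exceeds $\ell(\eta)$: appending an additional zero column beyond this threshold produces no new non-attacking fillings that can contribute to $x^\nu m_\eta[\mathfrak{X}_{\ell(\mu)}]$ after symmetrization, since any further label used must repeat one already present, forcing either an attacking conflict or a redundant symmetrization contribution. This is exactly the content of Ion-Wu convergence.

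For the basis property, I would establish a triangularity statement inside $\sP_{as}^{+}$. Define a partial order on $\Sigma$ refining the Bruhat order on the reduced-composition component together with the dominance order on the partition component. Using the Bruhat triangularity of the finite $E_\alpha$, transported through the explicit Jing-operator formula, combined with the standard leading-term behavior of $\mathscr{B}_n$ on monomial symmetric functions, I expect to obtain
\[
\widetilde{E}_{(\mu|\lambda)} = c_{(\mu|\lambda)} \cdot x^\mu m_\lambda[\mathfrak{X}_{\ell(\mu)}] + \sum_{(\nu|\eta) < (\mu|\lambda)} d_{(\nu|\eta)} \cdot x^\nu m_\eta[\mathfrak{X}_{\ell(\nu)}]
\]
for a nonzero scalar $c_{(\mu|\lambda)} \in \mathbb{Q}(q,t)$. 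Since $\{x^\nu m_\eta[\mathfrak{X}_{\ell(\nu)}]\}_{(\nu|\eta) \in \Sigma}$ is the standard homogeneous $\mathbb{Q}(q,t)$-basis of $\sP_{as}^{+}$, the basis property for $\{\widetilde{E}_{(\mu|\lambda)}\}$ then follows by a unitriangular change-of-basis argument, matched degree by degree against the known dimensions of $\sP(k)^{+}$.

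The main obstacle is the stabilization analysis in part one: carefully tracking how attacking conditions and the $\maj$, $\coinv$ statistics behave when one appends additional zero columns, and confirming that the Jing-operator formula meshes cleanly with the grouped HHL expansion so that the limit genuinely lives in $\sP(\ell(\mu))^{+}$. A secondary subtlety is pinning down the correct partial order on $\Sigma$ so that the leading-term computation in part two really delivers unitriangularity, rather than only a weaker triangularity that would require an extra invertibility argument.
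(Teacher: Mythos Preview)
This theorem is not proved in the present paper: it is quoted from the author's prior work \cite{MBWArxiv} and stated without proof here. There is therefore no in-paper argument to compare your proposal against. What the paper does supply are downstream consequences also cited from \cite{MBWArxiv}: the recursion in Proposition \ref{macd recursion} and the HHL-type expansion for $\widetilde{E}_{(\mu|\emptyset)}$ in Theorem \ref{convergence of macdonald}. Those hints suggest that the actual proof in \cite{MBWArxiv} first establishes convergence of $\lim_n E_{\mu*0^n}$ via a direct HHL analysis (yielding the formula in Theorem \ref{convergence of macdonald}), then builds the general $\widetilde{E}_{(\mu|\lambda)}$ by applying the already-constructed limit idempotents $\epsilon_k$ to $\widetilde{E}_{(\mu*\lambda|\emptyset)}$, rather than by analyzing $\epsilon_{\ell(\mu)}^{(n)}(E_{\mu*\lambda*0^{n-\ell(\mu)-\ell(\lambda)}})$ for each finite $n$ as you propose.

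One genuine concern with your route: you invoke the fifth identity of Lemma \ref{idempotent lemma} to control $\epsilon_{\ell(\mu)}^{(n)}$, but that identity is stated for the \emph{limit} operator $\epsilon_k$ acting on elements already of the form $x_1^{a_1}\cdots x_{k+1}^{a_{k+1}} F[\mathfrak{X}_{k+1}]$ with $F\in\Lambda$. It does not directly describe the finite operators $\epsilon_k^{(n)}$ on arbitrary polynomials, so using it to prove that the sequence $\epsilon_{\ell(\mu)}^{(n)}(E_{\ldots})$ converges is circular unless you separately establish a finite-$n$ analogue. The cleaner path, consistent with the paper's structure, is to first prove $\lim_n E_{\mu*\lambda*0^n}$ exists by a pure HHL stabilization argument (no symmetrizers involved), and only then apply $\epsilon_{\ell(\mu)}$, whose convergence is already packaged in Lemma \ref{idempotent lemma}. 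Your basis argument via triangularity is plausible in outline, though as you note the correct order on $\Sigma$ is not specified; the paper gives no guidance here since it simply cites the result.
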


\begin{remark}
     $\{\widetilde{E}_{(\mu|\lambda)}| (\mu|\lambda) \in \Sigma \}$ is a weight basis of $\sP_{as}^{+}$ for the limit Cherednik operators $\y$ \cite{MBWArxiv}. Each $\widetilde{E}_{(\mu|\lambda)}$ is homogeneous of degree $|\mu| + |\lambda|.$
\end{remark}

The stable-limit non-symmetric Macdonald functions satisfy the following recursion.

\begin{prop}\label{macd recursion}\cite{MBWArxiv}
    \begin{itemize}
        \item If $\mu \in \Compred$
        $$\widetilde{E}_{(\mu|\emptyset)} = \lim_{n} E_{\mu*0^n}$$ where the limit converges in the sense of Ion-Wu \cite{Ion_2022}.
        \item If $\mu_{i} > \mu_{i+1}$ and $s_i(\mu) \in \Compred$ then $$\widetilde{E}_{(s_i(\mu)|\lambda)} = 
  \left( T_i + \frac{(1-t)\widetilde{\alpha}_{(\mu|\lambda)}(i+1)}{\widetilde{\alpha}_{(\mu|\lambda)}(i)- \widetilde{\alpha}_{(\mu|\lambda)}(i+1)} \right)  \widetilde{E}_{(\mu|\lambda)}$$ where $\widetilde{\alpha}_{(\mu|\lambda)}$ denotes the $\y$ weight of $\widetilde{E}_{(\mu|\lambda)}.$
        \item Whenever $\mu_r \geq \lambda_1$ and $\mu_{r-1} \neq 0,$
        $$ \epsilon_{r-1} \left( \widetilde{E}_{(\mu_1,\dots,\mu_r|\lambda_1,\dots,\lambda_k)} \right) = \widetilde{E}_{(\mu_1,\dots,\mu_{r-1}|\mu_r,\lambda_1,\dots,\lambda_k)}.$$ 
    \end{itemize}
\end{prop}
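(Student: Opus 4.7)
The plan is to establish each of the three bullets by translating a corresponding identity for finite-variable non-symmetric Macdonald polynomials $E_\alpha$ into the stable limit, using the defining formula from Theorem \ref{stable-limit non-sym MacD function defn} together with the Hecke-algebraic properties of the idempotents $\epsilon_k$ recorded in Lemma \ref{idempotent lemma}. Throughout, care must be taken to justify interchanges of operators with the Ion--Wu limits.

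For the first bullet, the task reduces to showing that the defining limit $\lim_n \epsilon_{\ell(\mu)}^{(n)}(E_{\mu * 0^{n-\ell(\mu)}})$ coincides with $\lim_n E_{\mu * 0^{n-\ell(\mu)}}$. I would verify this by checking that the partial $t$-symmetrization $\epsilon_{\ell(\mu)}^{(n)}$ acts as the identity (or asymptotically so) on $E_{\mu * 0^{n-\ell(\mu)}}$. Concretely, I would analyze the HHL monomial expansion (Theorem \ref{HHL}): since the composition has all trailing entries equal to zero, a careful accounting of the non-attacking fillings, together with the polynomial action $T_i(f) = s_i(f) + (1-t)x_i(f - s_i(f))/(x_i - x_{i+1})$ for $i \geq \ell(\mu)+1$, should show that $T_i$ fixes $E_{\mu * 0^{n-\ell(\mu)}}$, and hence so does $\epsilon_{\ell(\mu)}^{(n)}$ by its Hecke-algebraic definition.

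For the second bullet, I would start from the classical Knop--Sahi intertwining identity at finite $n$: whenever $\mu_i > \mu_{i+1}$,
$$E_{(s_i \mu) * \lambda * 0^{n - \ell(\mu) - \ell(\lambda)}} = \left( T_i + \frac{(1-t)\, y_{i+1}^{(n)}}{y_i^{(n)} - y_{i+1}^{(n)}} \right) E_{\mu * \lambda * 0^{n - \ell(\mu) - \ell(\lambda)}},$$
where $y_j^{(n)}$ is the $Y_j^{(n)}$-weight of the polynomial on the right. Applying $\epsilon_{\ell(\mu)}^{(n)}$ to both sides and commuting it past $T_i$ --- which is legal since $i \leq \ell(\mu)-1$ in the non-symmetric part, by Lemma \ref{idempotent lemma} --- then taking $n \to \infty$ gives the stated identity, using that the finite Cherednik weights $y_j^{(n)}$ converge to $\widetilde{\alpha}_{(\mu|\lambda)}(j)$. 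This convergence is essentially the content of the construction of the limit Cherednik operators $\y$.

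For the third bullet, the key input is the idempotent composition rule $\epsilon_{r-1}\epsilon_r = \epsilon_{r-1}$ from Lemma \ref{idempotent lemma}. Provided that $\epsilon_{r-1}$ can be interchanged with the defining Ion--Wu limit of $\widetilde{E}_{(\mu|\lambda)}$, we obtain
$$\epsilon_{r-1}\!\left(\widetilde{E}_{(\mu|\lambda)}\right) \;=\; \lim_n \epsilon_{r-1}^{(n)} \epsilon_r^{(n)}\!\left(E_{\mu * \lambda * 0^{n-r-k}}\right) \;=\; \lim_n \epsilon_{r-1}^{(n)}\!\left(E_{\mu * \lambda * 0^{n-r-k}}\right),$$
which is exactly the defining expression for $\widetilde{E}_{(\mu_1,\dots,\mu_{r-1}\,|\,\mu_r,\lambda_1,\dots,\lambda_k)}$; the hypothesis $\mu_r \geq \lambda_1$ ensures $(\mu_r,\lambda_1,\dots,\lambda_k)$ is a partition, so the target indexing lies in $\Sigma$. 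The main technical obstacle across all three parts is the careful bookkeeping of Ion--Wu convergence --- in particular, verifying in the second bullet that the scalar $(1-t)y_{i+1}^{(n)}/(y_i^{(n)} - y_{i+1}^{(n)})$ stabilizes to the predicted limit, and in the first and third bullets that operators commute with the defining limits --- which is where the continuity packaged into Lemma \ref{idempotent lemma} and Theorem \ref{stable-limit non-sym MacD function defn} does the essential work.
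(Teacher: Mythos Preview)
This proposition is not proved in the present paper: it is quoted verbatim from the author's prior work \cite{MBWArxiv}, so there is no proof here to compare your proposal against. That said, your outline is essentially the argument one would expect and is sound in broad strokes for all three bullets.

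One point is worth sharpening. For the first bullet you propose to verify $T_iE_{\mu*0^{n-\ell(\mu)}}=E_{\mu*0^{n-\ell(\mu)}}$ for $i>\ell(\mu)$ by a direct analysis of the HHL monomial expansion. This is the hard way: extracting the $T_i$-invariance from a monomial sum is awkward because $T_i$ does not act diagonally on monomials. The standard route is purely Hecke-theoretic: whenever $\alpha_i=\alpha_{i+1}$, the element $T_iE_\alpha$ is again a $Y^{(n)}$-weight vector with the same weight as $E_\alpha$ (this is immediate from the intertwining relations between $T_i$ and the Cherednik operators), hence by simplicity of the spectrum $T_iE_\alpha=cE_\alpha$, and the quadratic relation $(T_i-1)(T_i+t)=0$ together with a leading-coefficient check forces $c=1$. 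This gives $\epsilon_{\ell(\mu)}^{(n)}E_{\mu*0^{n-\ell(\mu)}}=E_{\mu*0^{n-\ell(\mu)}}$ at once.

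For the second and third bullets your plan is correct. In the second, your observation that the hypotheses force $i\le \ell(\mu)-1$, so that Lemma~\ref{idempotent lemma} lets $\epsilon_{\ell(\mu)}^{(n)}$ commute past $T_i$, is exactly the point. In the third, note that the identity $\epsilon_{r-1}^{(n)}\epsilon_r^{(n)}=\epsilon_{r-1}^{(n)}$ you invoke is the finite-rank analogue of the relation stated in Lemma~\ref{idempotent lemma} for the limit operators; it is a standard parabolic-idempotent identity in $\sH_n$ and should be cited or verified separately, since the lemma as written only records the stable-limit version. As you correctly flag, the genuine technical content throughout is the compatibility of these operator identities with Ion--Wu convergence, and that is precisely what is established in \cite{MBWArxiv}.
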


We have the following stable-limit variant of the HHL formula \ref{HHL}.

\begin{thm}\cite{MBWArxiv}\label{convergence of macdonald}

For $\mu \in \Compred$
$$ \widetilde{E}_{(\mu|\emptyset)} = \sum_{\substack{\lambda ~ \text{partition}  \\ |\lambda| \leq |\mu|}} m_{\lambda}[x_{n+1}+\ldots] \sum_{\substack{\sigma:\mu * 0^{\ell(\lambda)} \rightarrow [n+\ell(\lambda)]\\ \text{non-attacking} \\ \forall i = 1,...,\ell(\lambda) \\ \lambda_i = |\sigma^{-1}(n+i)|}} x_1 ^{|\sigma^{-1}(1)|}\cdots x_n ^{|\sigma^{-1}(n)|} \widetilde{\Gamma}(\widehat{\sigma})$$  

where 

$$\widetilde{\Gamma}(\widehat{\sigma}) : = q^{\maj(\widehat{\sigma})}t^{\coinv(\widehat{\sigma})} \prod_{\substack{ u \in dg'(\mu * 0^{\ell(\lambda)}) \\ \widehat{\sigma}(u) \neq \widehat{\sigma}(d(u)) \\ u ~ \text{not in row } 1 }} \left( \frac{1-t}{1-q^{\mathleg(u)+1}t^{a(u)+1}} \right) \prod_{\substack{ u \in dg'(\mu * 0^{\ell(\lambda)}) \\ \widehat{\sigma}(u) \neq \widehat{\sigma}(d(u)) \\ u ~ \text{in row } 1 }} \left( 1-t \right). $$

\end{thm}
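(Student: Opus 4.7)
The plan is to take the Ion-Wu stable limit of the HHL formula (Theorem \ref{HHL}) applied to $E_{\mu * 0^N}$ as $N \to \infty$, using that $\widetilde{E}_{(\mu|\emptyset)} = \lim_N E_{\mu * 0^N}$ by the first bullet of Proposition \ref{macd recursion}. Write $n = \ell(\mu)$, and fix $N$ large. Since the trailing zero columns of $\mu * 0^N$ contribute no boxes to $dg'(\mu * 0^N)$, every non-attacking filling $\sigma \colon dg'(\mu * 0^N) \to [n + N]$ places its labels in the first $n$ columns. I would partition these fillings by the multiset of ``big'' labels $\{\sigma(u) : \sigma(u) > n\}$; sorting its multiplicities produces a partition $\lambda$ with $|\lambda| \leq |\mu|$, and the HHL sum reorganizes as a double sum over $\lambda$ and over fillings with that prescribed big-label profile.

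For each fixed $\lambda$, I would set up a bijection between the relevant fillings $\sigma$ and pairs consisting of (i) a ``template'' non-attacking filling $\tau \colon dg'(\mu * 0^{\ell(\lambda)}) \to [n+\ell(\lambda)]$ with $|\tau^{-1}(n+i)| = \lambda_i$, together with (ii) a choice of how to assign the $\ell(\lambda)$ canonical big labels to distinct actual big labels in $\{n+1,\ldots,n+N\}$ (modulo the symmetries permuting equal parts of $\lambda$). The key point making this bijection work is that the non-attacking condition forces any big label $n+k$ appearing in $dg'$ to lie in a row $\geq 2$: a box $(i,1)$ with $i \leq n$ labeled $n+k$ would attack the base-row label at $(n+k, 0)$. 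Since all big-labeled boxes are therefore above row 1, swapping the identities of big labels never creates or destroys attacking conflicts, so the template's non-attacking property is equivalent to $\sigma$'s. Summing the monomial contributions over (ii) collects the big-label variables into $m_\lambda[x_{n+1}+\cdots+x_{n+N}]$, which converges in the Ion-Wu sense to $m_\lambda[\mathfrak{X}_n]$.

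It remains to identify the statistics on each template $\tau$ with the factor $\widetilde{\Gamma}(\widehat{\tau})$. For any box $u \in dg'(\mu)$ not in row 1, both $\mathleg(u)$ and $a(u)$ depend only on $\mu$ (the extra zero columns do not intersect $u$'s row or any row below that contributes to $\arm^{\mathright}$), so these data and the corresponding Macdonald factors stabilize verbatim. For $u$ in row 1, however, $a(u)$ grows by exactly $N - \ell(\lambda)$ as $N$ increases, coming from the new zero-column base-row boxes contributing to $\arm^{\mathright}(u)$; consequently the factor $\frac{1-t}{1-q^{\mathleg(u)+1}t^{a(u)+1}}$ converges $t$-adically in $\mathbb{Q}(q,t)$ to $1-t$, precisely matching the special row-1 factor built into $\widetilde{\Gamma}$. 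The statistic $\maj(\widehat{\sigma})$ is computed from descents between $dg'$-boxes and their immediate neighbours, so it stabilizes directly to $\maj(\widehat{\tau})$.

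The main obstacle I anticipate is showing that $\coinv(\widehat{\sigma})$ also stabilizes to $\coinv(\widehat{\tau})$, since $\coinv$ is defined through a sum $\sum_u a(u)$ coupled to $|\Inv(\widehat\sigma)|$ and the correction term $|\{i<j : \mu_i \leq \mu_j\}|$, all three of which change with $N$. I would handle this using the co-inversion-triple characterization: every candidate triple in $\widehat{dg}(\mu * 0^N)$ involving one of the extra base-row boxes (at columns $> n + \ell(\lambda)$) must be shown to violate the cyclic-monotonicity condition, because the base-row labels increase strictly left-to-right and the geometry of triples extending into the trailing zero columns forces the required strict cyclic inequality on labels to fail. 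This reduces the co-inversion-triple count on $\widehat{\sigma}$ to that on $\widehat{\tau}$, delivering the matching power of $t$ and completing the identification with the claimed formula.
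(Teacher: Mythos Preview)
The paper does not prove this theorem in-text; it is quoted from the author's earlier work \cite{MBWArxiv}, so there is no proof here to compare against directly. Evaluating your outline on its own:

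Your overall strategy and your handling of the row-$1$ arm factors and of the extra base-row co-inversion triples are correct. The gap is in the bijection step. When you pass from a filling $\sigma$ whose big-label multiplicities form some composition $(a_1,\ldots,a_m)$ to a template $\tau$ whose big-label multiplicities are the \emph{partition} $\lambda=\sort(a)$, you must relabel the big values, and unless that relabeling is order-preserving it can alter both $\maj$ and $\coinv$. Concretely, take $\mu=(3)$, $n=1$: the column fillings with $\sigma(1,1),\sigma(1,2),\sigma(1,3)$ equal to $1,2,3$ and to $1,3,2$ have $\maj=3$ and $\maj=2$ respectively, so the HHL weight of $\sigma$ is \emph{not} determined by the pair $(\tau,[\psi])$ you describe. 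Your later discussion establishes only that the statistics stabilize as $N\to\infty$ for a fixed labelling; it does not establish invariance under the big-label relabeling implicit in your bijection, and indeed that invariance fails whenever two big labels sit in vertically adjacent boxes of a column (possible once $\max_i\mu_i\geq 3$). Hence summing over (ii) does not simply factor out $m_\lambda$ as claimed.

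A clean fix that preserves the rest of your argument: do not build a global bijection. Use that $\widetilde{E}_{(\mu|\emptyset)}\in\sP(n)^{+}$, write it as $\sum_{(\alpha|\lambda)} c_{(\alpha|\lambda)}\,x^{\alpha}m_{\lambda}[\mathfrak{X}_n]$, and compute each $c_{(\alpha|\lambda)}$ as the $t$-adic limit of the coefficient of the single monomial $x^{\alpha}x_{n+1}^{\lambda_1}\cdots x_{n+\ell(\lambda)}^{\lambda_{\ell(\lambda)}}$ in $E_{\mu*0^N}$. The HHL fillings contributing to that particular monomial already use exactly the labels $1,\ldots,n+\ell(\lambda)$ with the prescribed multiplicities, so they \emph{are} the templates with no relabeling required; your stabilization arguments for $\maj$, $\coinv$, and the row-$1$ factors then apply verbatim and yield exactly $\widetilde{\Gamma}(\widehat{\tau})$.
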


\section{Symmetrization and Specialization}

In the next section we build some tools which will allow us to compute and study the specializations of the stable-limit non-symmetric Macdonald functions at $q = t = 0.$

\subsection{Weyl Symmetrization and Isobaric Divided Difference Operators}

We now recall the definition of the Weyl symmetrization map and its partial symmetrization analogues. Informally, these maps are the $t = 0$ specialization of the $\epsilon_{k}^{(n)}$ operators defined previously.

\begin{defn}
    Let $ 0\leq k \leq n$. We define the \textbf{\textit{partial Weyl symmetrizer}}, $W_k^{(n)}$, to be the map $$W_k^{(n)}: \sP_n^{+} \rightarrow (\sP_n^{+})^{\mathfrak{S}_{(1^k,n-k)}}$$ given by 
    $$W_k^{(n)}(f(x_1,\ldots, x_n)):= \sum_{\sigma \in \mathfrak{S}_{(1^k,n-k)}} \sigma \left( f(x_1,\ldots,x_n) \prod_{k+1\leq i< j \leq n} \left( \frac{1}{1-x_j/x_i} \right) \right).$$ 
\end{defn}

\begin{remark}
    
Notice that these maps are defined over $\mathbb{Q}$ (over $\mathbb{Z}$ in fact). We may rewrite the given definition of $W_k^{(n)}$ as 
$$W_k^{(n)}(f(x_1,\ldots, x_n)) = \frac{ \sum_{\sigma \in \mathfrak{S}_{(1^k,n-k)}} (-1)^{\ell(\sigma)} \sigma \left( x^{\delta_k^{(n)}}f(x)  \right)}{\prod_{k+1\leq i< j \leq n} \left( x_i-x_j \right)}$$ where $\delta_{k}^{(n)}:= 0^{k}*(n-k-1,\ldots,1,0).$
 \end{remark}

 We will need a few properties of the Weyl symmetrization operators.

\begin{lem}
    As elements of $\End_{\mathbb{Q}(q,t)}(\sP_n^{+})$ the operators $W_{k}^{(n)}$ satisfy the following:
    \begin{itemize}
        \item $(W_{k}^{(n)})^2 = W_{k}^{(n)}$
        \item $\sigma W_{k}^{(n)} =  W_{k}^{(n)} \sigma$ for $ \sigma \in \mathfrak{S}_{(k,n-k)}$
        \item $ \sigma W_{k}^{(n)}  = W_{k}^{(n)}$ for $\sigma \in \mathfrak{S}_{(1^k,n-k)}$
        \item $W_k^{(n)}W_j^{(n)} = W_k^{(n)}$ for $k < j.$
    \end{itemize}
\end{lem}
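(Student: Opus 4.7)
The plan is to reduce every property to two tools: the alternative Weyl-character expression in the remark,
\[
W_k^{(n)}(f) = \frac{\sum_{\sigma \in \mathfrak{S}_{(1^k, n-k)}} (-1)^{\ell(\sigma)} \sigma(x^{\delta_k^{(n)}} f)}{\prod_{k+1 \leq i < j \leq n} (x_i - x_j)},
\]
and a key lemma asserting that $W_k^{(n)}$ acts as the identity on $\mathfrak{S}_{(1^k, n-k)}$-invariant polynomials $g \in \sP_n^{+}$. The key lemma is one line: by Vandermonde's identity $\sum_{\sigma}(-1)^{\ell(\sigma)} \sigma(x^{\delta_k^{(n)}}) = \prod_{k+1 \leq i < j \leq n}(x_i - x_j)$, pulling the invariant $g$ out of the antisymmetrizer yields $W_k^{(n)}(g) = g$.

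Properties 1, 2, and 3 then follow directly. For property 3: if $\tau \in \mathfrak{S}_{(1^k, n-k)}$, applying $\tau$ to both numerator and denominator produces a factor $(-1)^{\ell(\tau)}$ (numerator by reindexing $\sigma \mapsto \tau^{-1}\sigma$, denominator by Vandermonde antisymmetry), and these cancel, so $\tau \cdot W_k^{(n)}(f) = W_k^{(n)}(f)$. Property 1 is then immediate: the image of $W_k^{(n)}$ lies in the invariants by property 3, so the key lemma gives $(W_k^{(n)})^2 = W_k^{(n)}$. For property 2, taking $\sigma$ in the $\mathfrak{S}_k$ factor of $\mathfrak{S}_{(k, n-k)}$ (acting on $x_1, \dots, x_k$): this $\sigma$ fixes both $x^{\delta_k^{(n)}}$ and the bottom-block Vandermonde (which involve only $x_{k+1}, \dots, x_n$) and commutes with every element of $\mathfrak{S}_{(1^k, n-k)}$ due to disjoint supports, so $\sigma$ passes straight through to yield $\sigma W_k^{(n)}(f) = W_k^{(n)}(\sigma f)$.

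Property 4 is the only step requiring real calculation. Write $P_m := x^{\delta_m^{(n)}}/B_m$ for $m \in \{k,j\}$, with $B_m := \prod_{m+1 \leq i < i' \leq n}(x_i - x_{i'})$. I would factor $B_k = B_k^{(1)} B_k^{(2)} B_j$ according to whether the pair of indices lies in $\{k+1,\dots,j\}$, straddles $j$, or lies in $\{j+1,\dots,n\}$, and correspondingly split $x^{\delta_k^{(n)}} = (\prod_{m=k+1}^j x_m^{n-m}) \cdot x^{\delta_j^{(n)}}$. This yields $P_k = P_j \cdot Q_{k,j}$ with $Q_{k,j} := (\prod_{m=k+1}^j x_m^{n-m})/(B_k^{(1)} B_k^{(2)})$ manifestly $\mathfrak{S}_{(1^j, n-j)}$-invariant (numerator fixes, $B_k^{(1)}$ involves only variables with index at most $j$, and $B_k^{(2)}$ is symmetric in $x_{j+1},\dots,x_n$). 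The key lemma applied to $g = 1$ gives $\sum_{\tau \in \mathfrak{S}_{(1^j, n-j)}} \tau(P_j) = W_j^{(n)}(1) = 1$, whence invariance of $Q_{k,j}$ gives $\sum_\tau \tau(P_k) = Q_{k,j}$, so $P_j \cdot \sum_\tau \tau(P_k) = P_k$. Expanding $W_k^{(n)}(W_j^{(n)}(f))$ and reindexing the double sum over $\mathfrak{S}_{(1^k, n-k)} \times \mathfrak{S}_{(1^j, n-j)}$ via $\pi = \sigma\tau$ collapses the inner sum to $W_k^{(n)}(W_j^{(n)}(f)) = \sum_\pi \pi(f \cdot P_j \sum_\tau \tau(P_k)) = \sum_\pi \pi(f P_k) = W_k^{(n)}(f)$. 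The main obstacle is verifying this Vandermonde factorization and the invariance of $Q_{k,j}$; everything else follows from the key lemma almost mechanically.
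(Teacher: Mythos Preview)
Your approach differs from the paper's. The paper does not argue directly: its one-line proof defers to the later lemma establishing $\Upsilon\circ T_i = \xi_i\circ\Upsilon$ and $\Upsilon\circ\epsilon_k = W_k\circ\Upsilon$, the idea being that $W_k^{(n)}$ is the $t\to 0$ specialization of the Hecke idempotent $\epsilon_k^{(n)}$ and the analogous identities for $\epsilon_k$ then descend. Your argument is instead a direct, self-contained verification from the antisymmetrizer formula, which has the virtue of not relying on the Hecke-algebra machinery or on forward references. Your key lemma and the arguments for Properties~1, 3, and~4 are all correct; in particular the Vandermonde factorization $B_k = B_k^{(1)}B_k^{(2)}B_j$, the $\mathfrak{S}_{(1^j,n-j)}$-invariance of $Q_{k,j}$, and the reindexing $\pi=\sigma\tau$ (valid because $\mathfrak{S}_{(1^j,n-j)}\subseteq\mathfrak{S}_{(1^k,n-k)}$ for $k<j$) all check out.

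One point deserves comment. For Property~2 you treat only $\sigma$ in the $\mathfrak{S}_k$-factor of $\mathfrak{S}_{(k,n-k)}$, not the $\mathfrak{S}_{n-k}$-factor. This is not a gap in your argument so much as a defect in the stated lemma: commutation actually fails for the second factor. For instance with $n=3$, $k=1$ one computes $W_1^{(3)}(x_2)=x_2+x_3$ while $W_1^{(3)}(s_2\cdot x_2)=W_1^{(3)}(x_3)=0$, so $s_2 W_1^{(3)}\neq W_1^{(3)}s_2$ even though $s_2\in\mathfrak{S}_{(1,2)}$. The Hecke analogue on which the paper's deferred proof rests only asserts $T_i\epsilon_k=\epsilon_k T_i$ for $1\leq i\leq k-1$, which corresponds precisely to the $\mathfrak{S}_k$-factor you handled. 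You should note explicitly that Property~2 as written appears to be a misstatement (likely $\mathfrak{S}_{(k,1^{n-k})}$ was intended) and that you have proved the part that is actually true.
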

\begin{proof}
    This will follow from Lemma \ref{specializations of Hecke maps} proven later in this section.
\end{proof}

\begin{lem}
   For $0 \leq k \leq n$
   $$W_{k}^{(n)}\Xi^{(n)} = \Xi^{(n)} W_{k}^{(n+1)}.$$
\end{lem}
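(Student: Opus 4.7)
The plan is to reduce the identity to an equality of polynomial numerators using the alternative determinantal formula for $W_k^{(\cdot)}$ stated in the preceding remark, and then evaluate at $x_{n+1}=0$. Write $D_n := \prod_{k+1 \leq i < j \leq n}(x_i - x_j)$. A direct inspection gives $D_{n+1}|_{x_{n+1}=0} = x_{k+1}\cdots x_n \cdot D_n$ (the factors with $j = n+1$ collapse to $x_i$). Since $\sP_n^+$ is an integral domain and $D_n$ is nonzero, multiplying the desired equation through by $D_n$ reduces the claim to
\begin{equation*}
\Xi^{(n)}\!\left(\sum_{\sigma \in \mathfrak{S}_{(1^k,n+1-k)}} (-1)^{\ell(\sigma)} \sigma(x^{\delta_k^{(n+1)}} f)\right) = x_{k+1}\cdots x_n \sum_{\tau \in \mathfrak{S}_{(1^k,n-k)}} (-1)^{\ell(\tau)} \tau(x^{\delta_k^{(n)}} \Xi^{(n)}(f)),
\end{equation*}
which I would verify term by term.

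I would partition the left-hand sum by $p := \sigma^{-1}(n+1) \in \{k+1,\ldots,n+1\}$. The exponent of $x_{n+1}$ in the monomial $\sigma(x^{\delta_k^{(n+1)}}) = \prod_i x_{\sigma(i)}^{\delta_k^{(n+1)}(i)}$ is $\delta_k^{(n+1)}(p)$. The key numerical input is that $\delta_k^{(n+1)}$ has strictly positive entries $n-k,\,n-k-1,\ldots,1$ at positions $k+1,\ldots,n$ and vanishes only at position $n+1$. Hence every term with $p \neq n+1$ is divisible by $x_{n+1}$ and is annihilated by $\Xi^{(n)}$, so only the $\sigma$ that fix $n+1$ survive. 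Such $\sigma$ correspond bijectively, and length-preservingly, to permutations $\tau \in \mathfrak{S}_{(1^k,n-k)}$ acting trivially on the last variable.

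For any such $\tau$, two easy identities close the argument. First, $\tau(f)|_{x_{n+1}=0} = \tau(\Xi^{(n)}(f))$ because $\tau$ fixes $x_{n+1}$. Second, the restriction identity $\delta_k^{(n+1)}(k+j) - \delta_k^{(n)}(k+j) = 1$ for $j = 1,\ldots,n-k$, together with $\delta_k^{(n+1)}(n+1) = 0$ and the fact that $\tau$ permutes $\{x_{k+1},\ldots,x_n\}$ (and hence fixes their product), gives $\tau(x^{\delta_k^{(n+1)}}) = x_{k+1}\cdots x_n \cdot \tau(x^{\delta_k^{(n)}})$. Summing over $\tau$ matches the right-hand side exactly. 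I expect the only real obstacle to be bookkeeping: cleanly separating which variables each $\mathfrak{S}_{(1^k,\star)}$ permutes, and tracking the one-off shift between the staircases $\delta_k^{(n)}$ and $\delta_k^{(n+1)}$; the actual algebraic content is essentially a single vanishing argument plus the staircase offset.
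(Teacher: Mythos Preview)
Your proof is correct. The paper itself does not give a proof of this lemma at all; it simply declares the result standard and leaves it to the reader. Your argument, passing to the antisymmetrized numerator via the determinantal form of $W_k^{(\cdot)}$ and then evaluating at $x_{n+1}=0$, is exactly the kind of direct verification one would expect for this ``standard'' fact, and all the steps (the vanishing of terms with $\sigma^{-1}(n+1)\neq n+1$, the length-preserving bijection with $\mathfrak{S}_{(1^k,n-k)}$, and the staircase offset $\delta_k^{(n+1)}=\delta_k^{(n)}+(0^k,1^{n-k},0)$) check out.

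One minor wording point: when you say ``multiplying the desired equation through by $D_n$,'' the displayed identity you actually arrive at is the original equation multiplied by $x_{k+1}\cdots x_n\, D_n$, since $\Xi^{(n)}(D_{n+1}) = x_{k+1}\cdots x_n\, D_n$ is what appears on the left after applying $\Xi^{(n)}$ to $D_{n+1}\cdot W_k^{(n+1)}(f)$. Both factors are nonzero in the integral domain $\sP_n^{+}$, so the reduction is valid; it would just be cleaner to say you are canceling $x_{k+1}\cdots x_n\, D_n$ rather than $D_n$ alone.
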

\begin{proof}
This proof is standard and we leave its proof to the reader.        
\end{proof}

The above lemma allows for the following definition.

\begin{defn}
    Let $k \geq 0$ define the operator $W_k$ on $\sP_{as}^{+}$ as 
    $$W_k:= \lim_{n} W_k^{(n)}.$$
\end{defn}

As we will prove later, the operators $W_k$ are the $t = 0$ specializations of the partial Hecke symmetrizers $\epsilon_k.$

\begin{defn}
     Define the \textbf{\textit{isobaric divided difference operators}}, $\xi_1, \xi_2, \xi_3, \ldots$, on $\sP_{as}^{+}$ by 
    $$\xi_i(f) = \frac{x_if - x_{i+1}s_i(f)}{x_i-x_{i+1}}.$$
\end{defn}

\begin{lem}
We have the following relations for $i,j \geq 1:$
    \begin{itemize}
        \item $\xi_i^2 = \xi_i$
        \item $\xi_i\xi_{i+1}\xi_i = \xi_{i+1}\xi_i \xi_{i+1}$
        \item $\xi_i\xi_j = \xi_j \xi_i$ for $|i-j| >1.$
    \end{itemize}
\end{lem}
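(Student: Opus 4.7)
The plan is to observe that $\xi_i$ is precisely the specialization at $t=0$ of the Hecke generator $T_i$ acting on $\sP_{as}^{+}$, and then to deduce the three relations from the known Hecke algebra relations recalled in Definition \ref{finite hecke alg defn}. First I would compute, directly from the definition in that subsection,
\[ T_i(f)\big|_{t=0} = s_i(f) + x_i\frac{f - s_i(f)}{x_i - x_{i+1}} = \frac{(x_i - x_{i+1})s_i(f) + x_i(f-s_i(f))}{x_i - x_{i+1}} = \frac{x_i f - x_{i+1}s_i(f)}{x_i - x_{i+1}} = \xi_i(f), \]
so $\xi_i = T_i\big|_{t=0}$ as endomorphisms of $\sP_{as}^{+}$ (the same formula makes sense in the stable-limit setting because $s_i$ acts only on the first few variables and $\xi_i$ is defined by a rational expression that is clearly a polynomial).

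Next I would specialize the three Hecke relations. The quadratic relation $(T_i - 1)(T_i + t) = 0$ becomes $T_i(T_i - 1) = 0$ at $t = 0$, i.e.\ $T_i^2 = T_i$, which gives $\xi_i^2 = \xi_i$. The braid relation $T_iT_{i+1}T_i = T_{i+1}T_iT_{i+1}$ and the far-commutation relation $T_iT_j = T_jT_i$ for $|i-j|>1$ are both independent of $t$, so they specialize to the corresponding identities for the $\xi_i$.

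As a sanity check, I would verify $\xi_i^2 = \xi_i$ intrinsically: the identity $s_i\xi_i(f) = \xi_i(f)$ follows from applying $s_i$ directly to $\frac{x_if - x_{i+1}s_i(f)}{x_i-x_{i+1}}$, and conversely any $s_i$-invariant $g$ satisfies $\xi_i(g) = \frac{(x_i - x_{i+1})g}{x_i - x_{i+1}} = g$; combining these gives $\xi_i^2 = \xi_i$. There is no real obstacle here; the only mild point to be careful about is that the operators act on the infinite-variable space $\sP_{as}^{+}$, but because $\xi_i$ only involves $x_i, x_{i+1}, s_i$, its action on any element of $\sP(k)^{+}$ lands back in $\sP_{as}^{+}$ and agrees with the finite-variable $T_i\big|_{t=0}$, so all three identities transfer from the Hecke algebra to $\sP_{as}^{+}$ without any limiting issues.
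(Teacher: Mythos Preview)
Your proposal is correct and takes essentially the same approach as the paper: the paper defers this lemma to Lemma \ref{specializations of Hecke maps}, whose proof contains exactly the computation you give showing $T_i|_{t=0}=\xi_i$, after which the $0$-Hecke relations follow by specializing the Hecke relations of Definition \ref{finite hecke alg defn}. Your direct verification of $\xi_i^2=\xi_i$ is a nice addition but not required.
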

\begin{proof}
    This result is standard but in particular will follow from Lemma \ref{specializations of Hecke maps} proven later in this section.
\end{proof}

The above are the relations for the \textbf{\textit{0-Hecke algebra}}. The following standard lemma relates the Weyl symmetrizers $W_k^{(n)}$ to the isobaric divided difference operators $\xi_i.$

\begin{lem}
We have the recursion relation:
    $$W_k^{(n)} = \xi_{n-1}\cdots \xi_{k+1}W_{k+1}^{(n)}.$$ 
\end{lem}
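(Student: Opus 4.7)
The plan is to identify $W_k^{(n)}$ with the Demazure-type operator $\xi_{w_0^{(k)}}$, where $w_0^{(k)}$ denotes the longest element of the parabolic subgroup $\mathfrak{S}_{(1^k,n-k)}\cong\mathfrak{S}_{\{k+1,\ldots,n\}}$, and then read off the recursion from a well-chosen reduced expression. Granting that identification, the preceding lemma giving the 0-Hecke relations for the $\xi_i$ implies that, for any reduced word $s_{i_1}\cdots s_{i_r}$ representing an element $w\in\mathfrak{S}_n$, the composition $\xi_{i_1}\cdots\xi_{i_r}$ depends only on $w$ and may be denoted $\xi_w$. The word $(s_{n-1}s_{n-2}\cdots s_{k+1})\cdot w_0^{(k+1)}$ is then a reduced expression for $w_0^{(k)}$ in $\mathfrak{S}_{\{k+1,\ldots,n\}}$, since the first factor sends the letter $k+1$ to position $n$ (contributing $n-k-1$ simple reflections) and $w_0^{(k+1)}$ afterwards reverses the remaining letters $\{k+2,\ldots,n\}$ in positions $\{k+1,\ldots,n-1\}$; the total length $(n-k-1)+\binom{n-k-1}{2}=\binom{n-k}{2}$ matches $\ell(w_0^{(k)})$. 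Consequently
\[
\xi_{w_0^{(k)}}=\xi_{n-1}\xi_{n-2}\cdots\xi_{k+1}\,\xi_{w_0^{(k+1)}},
\]
and the claimed identity follows upon replacing each $\xi_{w_0^{(j)}}$ by $W_j^{(n)}$.

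To establish the identification $W_k^{(n)}=\xi_{w_0^{(k)}}$, I would use the alternating formulation given in the remark just after the definition of $W_k^{(n)}$:
\[
W_k^{(n)}(f)=\frac{\sum_{\sigma\in\mathfrak{S}_{(1^k,n-k)}}(-1)^{\ell(\sigma)}\sigma\!\left(x^{\delta_k^{(n)}}f\right)}{\prod_{k+1\le i<j\le n}(x_i-x_j)},
\]
and invoke the classical Demazure formula: for the longest element $w_0$ of a symmetric group $\mathfrak{S}_m$ acting on variables $y_1,\ldots,y_m$, the Demazure operator $\xi_{w_0}$ is given by $\xi_{w_0}(g) = A(y^{\rho}g)/\prod_{i<j}(y_i-y_j)$ where $\rho=(m-1,\ldots,1,0)$ and $A$ is the alternation. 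Applied to $\mathfrak{S}_{\{k+1,\ldots,n\}}$ with the variables $x_{k+1},\ldots,x_n$ and with $\rho$ shifted to $\delta_k^{(n)}$ (which is zero on the first $k$ coordinates), this formula produces exactly $W_k^{(n)}$.

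The main potential obstacle is avoiding circularity with the earlier lemma on the basic properties of $W_k^{(n)}$ (which is deferred to Lemma \ref{specializations of Hecke maps}); everything used here depends only on the alternating formula from the remark and the 0-Hecke relations from the lemma immediately preceding this one, so no circularity arises. If a self-contained direct proof were preferred one could instead induct on $n-k$: the base case $k=n-2$ is an immediate unwinding of the two-term sum $W_{n-2}^{(n)}(f)=f\,x_{n-1}/(x_{n-1}-x_n)+s_{n-1}(f)\,x_n/(x_n-x_{n-1})=\xi_{n-1}(f)=\xi_{n-1}W_{n-1}^{(n)}(f)$, and the inductive step reduces to a manipulation of coset representatives of $\mathfrak{S}_{\{k+1,\ldots,n\}}/\mathfrak{S}_{\{k+2,\ldots,n\}}$, but the bookkeeping there is appreciably less clean than the Demazure-theoretic argument above.
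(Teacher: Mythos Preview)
The paper does not supply a proof of this lemma; it is merely labeled as a ``standard'' fact. Your approach---identifying $W_k^{(n)}$ with the Demazure operator $\xi_{w_0^{(k)}}$ via the alternating formula in the remark, then reading off the recursion from the reduced decomposition $w_0^{(k)}=(s_{n-1}\cdots s_{k+1})\,w_0^{(k+1)}$---is correct and is exactly the standard argument one would give.

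Your circularity check is also right. In the paper's logical structure, the 0-Hecke relations for the $\xi_i$ are deduced from the identity $\Upsilon\circ T_i|_{\mathcal{O}}=\xi_i\circ\Upsilon|_{\mathcal{O}}$, whose proof is a direct computation not invoking the present recursion; it is only the companion identity $\Upsilon\circ\epsilon_k|_{\mathcal{O}}=W_k\circ\Upsilon|_{\mathcal{O}}$ whose proof appeals to this recursion. Hence the dependencies are acyclic, and your argument stands on its own.
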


One of the main utilities for defining the maps $W_k^{(n)}$ is that they generate the Schur polynomials in the following way.

\begin{prop}[Weyl Character Formula for $GL_n$]
   For $ \lambda \in \Par$ and $ \ell(\lambda) \leq n$
   $$W_0^{(n)}(x^{\lambda}) = s_{\lambda}(x_1,\ldots, x_n).$$
\end{prop}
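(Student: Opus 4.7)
The plan is to reduce this to the classical Jacobi bialternant (Weyl character) formula by invoking the alternative expression for $W_k^{(n)}$ given in the remark following the definition of the partial Weyl symmetrizer. Setting $k=0$, we have $\delta_0^{(n)} = (n-1, n-2, \ldots, 1, 0)$, and the remark gives
$$W_0^{(n)}(x^{\lambda}) = \frac{\sum_{\sigma \in \mathfrak{S}_n} (-1)^{\ell(\sigma)} \sigma\bigl(x^{\lambda + \delta_0^{(n)}}\bigr)}{\prod_{1 \leq i < j \leq n}(x_i - x_j)},$$
where we view $\lambda$ as padded with zeros to length $n$ (using $\ell(\lambda) \leq n$).

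Next I would recognize the right-hand side as the classical bialternant. The denominator is exactly the Vandermonde determinant $\det(x_i^{n-j})_{i,j=1}^n$, and the numerator, being the signed sum over $\mathfrak{S}_n$ of $\sigma(x_1^{\lambda_1+n-1} x_2^{\lambda_2+n-2} \cdots x_n^{\lambda_n})$, is exactly the alternant $\det\bigl(x_i^{\lambda_j+n-j}\bigr)_{i,j=1}^n$. Jacobi's bialternant formula then identifies this ratio with $s_\lambda(x_1,\ldots,x_n)$.

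The only subtlety to address is that the ratio is a priori a rational function; one should note that the numerator is antisymmetric under $\mathfrak{S}_n$ (since applying a transposition to the sum introduces a sign and permutes the terms), so it is divisible by the Vandermonde in $\mathbb{Z}[x_1,\ldots,x_n]$, making the quotient a genuine polynomial. One must also use that $\lambda$ is a partition with $\ell(\lambda) \leq n$, so that $\lambda + \delta_0^{(n)}$ is a strict partition and the leading term of the resulting polynomial is $x^\lambda$ (this rules out vanishing in the bialternant).

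There is no real obstacle here: the result is essentially a restatement of the Weyl/Jacobi character formula once the antisymmetrization form of $W_0^{(n)}$ is written down. The proof is therefore a short invocation of the remark together with a citation of the bialternant formula for $s_\lambda$.
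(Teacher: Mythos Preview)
Your proposal is correct. The paper does not actually supply a proof of this proposition; it is simply stated as the classical Weyl character formula for $\GL_n$, so there is nothing in the paper to compare against beyond noting that your argument is exactly the standard derivation one would expect: rewrite $W_0^{(n)}$ via the antisymmetrization formula in the remark and invoke Jacobi's bialternant identity.
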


 Here we review some relevant information about the key polynomials.

\begin{defn}\cite{Dem_1974}
    Let $n \geq 1$. Define the \textbf{\textit{key polynomials}} to be the unique collection of polynomials $\{\mathcal{K}_{\alpha}(x_1,\ldots, x_n) \}_{\alpha \in \mathbb{Z}_{\geq 0}^{n}} \subset \sP_n^{+}$ determined by the following properties:
    \begin{itemize}
        \item If $\alpha_1 \geq \ldots \geq \alpha_n$ then 
        $$\mathcal{K}_{\alpha}(x_1,\ldots, x_n)= x^{\alpha}.$$
        \item Whenever $\alpha_i > \alpha_{i+1}$
        $$\mathcal{K}_{s_i(\alpha)}(x_1,\ldots, x_n) = \xi_{i}(\mathcal{K}_{\alpha}(x_1,\ldots, x_n)).$$
    \end{itemize}
\end{defn}

We refer the reader to Kirillov \cite{Kirillov_2016} and Mason \cite{mason_2009} for an overview of key polynomials.

By a simple inductive argument we see that for $\alpha \in \mathbb{Z}_{\geq 0}^{n}$
$$\mathcal{K}_{(\alpha_1,\ldots, \alpha_n,0)}(x_1,\ldots, x_n,x_{n+1}) = \mathcal{K}_{(\alpha_1,\ldots, \alpha_n)}(x_1,\ldots, x_n).$$ As such we will refer to $\mathcal{K}_{\mu}(x)$ for $\mu \in \Compred$ unambiguously as an element of 
$\mathbb{Z}_{\geq 0}[x_1, x_2, x_3,\ldots] \subset \sP_{as}^{+}.$

\begin{remark}
    It is known that the key polynomials $\{ \mathcal{K}_{\alpha}| \alpha \in \mathbb{Z}_{\geq 0}^n \}$ form a basis for $\sP_n^{+}.$
    
    For $\lambda \in \Par$ and $ n \geq \ell(\lambda)$
    $$\mathcal{K}_{0^{n-\ell(\lambda)}*\rev(\lambda)}(x_1,\ldots,x_n) = s_{\lambda}(x_1,\ldots, x_n).$$ Further, if $\alpha = (\alpha_1,\ldots, \alpha_n) \in \mathbb{Z}_{\geq 0}^{n}$ and there exists some $1 \leq i < j \leq n$ with $\alpha_i < \ldots < \alpha_j$ then $\mathcal{K}_{\alpha}(x_1,\ldots, x_n)$ is symmetric in the variables $x_i,\ldots, x_j.$ In particular, for any $i \leq k \leq j-1$, $\xi_k(\mathcal{K}_{\alpha}(x_1,\ldots, x_n)) = \mathcal{K}_{\alpha}(x_1,\ldots, x_n).$
\end{remark}

\subsection{Specialization at $q=t=0$}

\begin{defn}\label{specialization definition}
    Define $\mathcal{O} \subset \sP_{as}^{+}$ to be the set of $f(x) \in \sP_{as}^{+}$ such that 
    $$f(x) = f(x_1,x_2,\ldots ;q,t) = \sum_{i} c^{(i)} x^{\mu^{(i)}}m_{\lambda^{(i)}}[X]$$ for some scalars $c^{(i)} = c^{(i)}(q,t) \in \mathbb{Q}[q][[t]] \cap \mathbb{Q}(q,t)$, $(\mu^{(i)}|\lambda^{(i)}) \in \Sigma.$ 
    
    Let $\sP_{as,\mathbb{Q}}^{+}$ denote the set of $f(x) \in \sP_{as}^{+}$ such that $$f(x) = \sum_{i} c^{(i)} x^{\mu^{(i)}}m_{\lambda^{(i)}}[X]$$ for some scalars $c^{(i)} \in \mathbb{Q}$, $(\mu^{(i)}|\lambda^{(i)}) \in \Sigma.$ Define the $\mathbb{Q}$-algebra homomorphism $\Upsilon:\mathcal{O} \rightarrow \sP_{as,\mathbb{Q}}^{+}$ by 
    $$\Upsilon(f(x_1,x_2,\ldots ;q,t)) := f(x_1,x_2,\ldots; 0,0).$$ Equivalently, 
    $$\Upsilon(f) := \lim_{q \rightarrow 0} \lim_{ t \rightarrow 0} f.$$
\end{defn}

\begin{remark}
    For arbitrary elements of $\mathbb{Q}(q,t)$ the limits $\lim_{q \rightarrow 0}$ and $\lim_{t \rightarrow 0}$ do not commute. For example, $\lim_{q \rightarrow 0} \lim_{ t \rightarrow 0} \frac{q}{t} $ does not exist whereas $\lim_{t \rightarrow 0} \lim_{ q \rightarrow 0} \frac{q}{t} = 0.$ However, these limits commute on $\mathbb{Q}[q][[t]] \cap \mathbb{Q}(q,t)$ so there is no ambiguity in Definition \ref{specialization definition}.
\end{remark}

We will need the following lemma.

\begin{lem}\label{continuity of specialization}
    Let $f_n \in \sP_n^{+}\cap \mathcal{O}$ with $\lim_{n} f_n = f \in \sP_{as}^{+}.$ Then $f \in \mathcal{O}$ and 
    $$\Upsilon(f) = \lim_{n} \Upsilon(f_n).$$ 
\end{lem}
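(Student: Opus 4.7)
The plan is to work coefficient-wise in the $\mathbb{Q}(q,t)$-basis $\{x^\mu m_\lambda[X]\}_{(\mu|\lambda) \in \Sigma}$ of $\sP_{as}^{+}$. First I would write
$$f_n = \sum_{(\mu|\lambda) \in \Sigma} c_{(\mu|\lambda)}^{(n)}\, x^\mu m_\lambda[X], \qquad f = \sum_{(\mu|\lambda) \in \Sigma} c_{(\mu|\lambda)}\, x^\mu m_\lambda[X],$$
where the expansions are finite for each fixed $n$ (and for $f$) because of the bounded-degree condition. The hypothesis $f_n \in \mathcal{O}$ gives $c_{(\mu|\lambda)}^{(n)} \in \mathbb{Q}[q][[t]] \cap \mathbb{Q}(q,t)$ for every $n$ and every $(\mu|\lambda) \in \Sigma$. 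The crucial input from the Ion--Wu framework is that convergence $\lim_n f_n = f$ in $\sP_{as}^{+}$, combined with the grading and the filtration $\sP_{as}^{+} = \bigcup_k \sP(k)^{+}$, amounts to eventual stabilization of coefficients in this basis: for each $(\mu|\lambda) \in \Sigma$ there exists $N = N_{(\mu|\lambda)}$ with $c_{(\mu|\lambda)}^{(n)} = c_{(\mu|\lambda)}$ for all $n \geq N$.

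Given this stabilization, both conclusions follow immediately. For $f \in \mathcal{O}$, fix any $(\mu|\lambda)$, choose $n \geq N_{(\mu|\lambda)}$, and note that $c_{(\mu|\lambda)} = c_{(\mu|\lambda)}^{(n)}$ already lies in $\mathbb{Q}[q][[t]] \cap \mathbb{Q}(q,t)$, which is precisely the membership condition for $\mathcal{O}$. For the specialization statement, recall that $\Upsilon$ is a $\mathbb{Q}$-algebra homomorphism that sends each admissible scalar $c(q,t)$ to $c(0,0) \in \mathbb{Q}$, so it acts diagonally on the basis:
$$\Upsilon(f) = \sum_{(\mu|\lambda)} c_{(\mu|\lambda)}(0,0)\, x^\mu m_\lambda[X], \qquad \Upsilon(f_n) = \sum_{(\mu|\lambda)} c_{(\mu|\lambda)}^{(n)}(0,0)\, x^\mu m_\lambda[X].$$
Coefficient stabilization for $f_n \to f$ immediately transfers to coefficient stabilization for $\Upsilon(f_n) \to \Upsilon(f)$, in the same Ion--Wu sense.

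The main obstacle is really just the translation step: verifying that Ion--Wu convergence in $\sP_{as}^{+}$ implies eventual equality of coefficients in the chosen basis. This is not a calculation but a conceptual check, and it rests on the fact that each homogeneous piece of $\sP(k)^{+}$ is a finite-dimensional $\mathbb{Q}(q,t)$-space on which the Ion--Wu topology restricts to the discrete topology adapted to stabilization of a countable basis. Once this identification is in place, the proof reduces to the fact that $\Upsilon$ is defined basis-coefficient by basis-coefficient and therefore commutes trivially with discrete stabilization.
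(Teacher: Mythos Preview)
Your argument rests on the claim that Ion--Wu convergence of the sequence $(f_n)$ forces the coefficients in the global basis $\{x^\mu m_\lambda[X]\}_{(\mu|\lambda)\in\Sigma}$ to eventually stabilize. This is false on both counts: the Ion--Wu topology is $t$-adic rather than discrete, and more fundamentally, the relevant expansions are in the \emph{truncated} basis $x^\mu m_\lambda[x_1+\cdots+x_n]$, not the global one. A simple counterexample is $f_n = x_1+\cdots+x_n \in \sP_n^{+}\cap\mathcal{O}$, whose Ion--Wu limit is $f = m_1[X]$. In your global basis, each $f_n$ expands as $\sum_{k=1}^n x^{(0^{k-1},1)}m_\emptyset[X]$, so the coefficient of $x^\emptyset m_1[X]$ is $0$ for every $n$; yet in $f$ that coefficient is $1$. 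There is no stabilization, and the stabilized values you would extract (namely $c_{((0^{k-1},1)|\emptyset)}=1$ for each $k$) do not even reassemble into $f$.

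The paper's proof avoids this by unpacking the definition of Ion--Wu convergence directly: one writes $f_n = \sum_i c_n^{(i)} x^{\mu^{(i)}} m_{\lambda^{(i)}}[x_1+\cdots+x_n]$ with a fixed finite index set, where the $c_n^{(i)}$ converge $t$-adically to some $c^{(i)}\in\mathbb{Q}(q,t)$. Membership of $f$ in $\mathcal{O}$ then follows because $\mathbb{Q}[q][[t]]$ is $t$-adically complete, and the interchange $\Upsilon(\lim_n f_n)=\lim_n\Upsilon(f_n)$ requires a genuine (if short) argument that evaluation at $q=t=0$ is continuous for the $t$-adic topology on $\mathbb{Q}[q][[t]]\cap\mathbb{Q}(q,t)$. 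Neither of these steps is bypassed by a discreteness shortcut.
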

\begin{proof}
    By the definition of Ion-Wu's convergence (see \cite{Ion_2022}) we know that we have for all $n \geq 1$
    $$f_n = \sum_{i=1}^{N} c_n^{(i)} x^{\mu^{(i)}}m_{\lambda^{(i)}}[x_1+\ldots+  x_n]$$ where 
    $c_n^{(i)} \in \mathbb{Q}(q,t)$, $(\mu^{(i)}|\lambda^{(i)}) \in \Sigma$ with $\lim_{n} c_n^{(i)} = c^{(i)} \in \mathbb{Q}(q,t)$ convergent $t$-adically. Since $f_n \in \sP_n^{+}\cap \mathcal{O}$ we know that $c_n^{(i)} = c_n^{(i)}(q,t) \in \mathbb{Q}[q][[t]] \cap \mathbb{Q}(q,t).$ Since $\mathbb{Q}[q][[t]]$ is topologically complete $t$-adically we must have $c^{(i)} \in \mathbb{Q}[q][[t]] \cap \mathbb{Q}(q,t).$ Then 
    it is clear that 
    $$ f = \sum_{i=1}^{N} c^{(i)} x^{\mu^{(i)}}m_{\lambda^{(i)}}[X] \in \mathcal{O}.$$ 
    A simple topological argument shows that $$\lim_{q \rightarrow 0} \lim_{ t \rightarrow 0} c^{(i)}(q,t) = \lim_{n} \lim_{q \rightarrow 0} \lim_{ t \rightarrow 0} c_n^{(i)}(q,t).$$ Then we find 
    \begin{align*}
        &\lim_{n} \Upsilon(f_n) \\
        &= \lim_{n} \Upsilon \left( \sum_{i=1}^{N} c_n^{(i)} x^{\mu^{(i)}}m_{\lambda^{(i)}}[x_1+\ldots+  x_n] \right)\\
        &= \lim_{n} \lim_{q \rightarrow 0} \lim_{ t \rightarrow 0} \sum_{i=1}^{N} c_n^{(i)} x^{\mu^{(i)}}m_{\lambda^{(i)}}[x_1+\ldots+  x_n]\\
        &= \lim_{n}  \sum_{i=1}^{N} \left( \lim_{q \rightarrow 0} \lim_{ t \rightarrow 0} c_n^{(i)} \right) x^{\mu^{(i)}}m_{\lambda^{(i)}}[x_1+\ldots+  x_n]\\
        &= \sum_{i=1}^{N}  \left(\lim_{q \rightarrow 0} \lim_{ t \rightarrow 0} c^{(i)} \right) x^{\mu^{(i)}}m_{\lambda^{(i)}}[X]\\
        &= \Upsilon(f).\\
    \end{align*}
\end{proof}

Here we recall a result of Ion \cite{Ion_2003} relating the non-symmetric Macdonald polynomials to the key polynomials.

\begin{thm}\cite{Ion_2003} \label{specialization of non sym Macd}
    For $\alpha \in \mathbb{Z}_{\geq 0}^{n}$
    $$\Upsilon(E_{\alpha}) = \mathcal{K}_{\alpha}.$$
\end{thm}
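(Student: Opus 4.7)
The plan is to induct on the Bruhat order on $\mathbb{Z}_{\geq 0}^n$, exploiting the Knop--Sahi intertwiner recursion for $E_\alpha$ and comparing it, after specialization, with the defining recursion for $\mathcal{K}_\alpha$. For the base case, when $\alpha$ is weakly decreasing, $\alpha$ is minimal in its $\mathfrak{S}_n$-orbit under the Bruhat order of the paper, so the triangularity axiom for $E_\alpha$ forces $E_\alpha = x^\alpha$; since by definition $\mathcal{K}_\alpha = x^\alpha$ in that case as well, we have $\Upsilon(E_\alpha) = x^\alpha = \mathcal{K}_\alpha$ immediately.

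For the inductive step, suppose $\alpha_i > \alpha_{i+1}$ and assume the result for $\alpha$. The finite analogue of the second bullet of Proposition \ref{macd recursion} is the Knop--Sahi relation
$$E_{s_i(\alpha)} = \left(T_i + \frac{(1-t)\,Y_\alpha(i+1)}{Y_\alpha(i) - Y_\alpha(i+1)}\right) E_\alpha,$$
where $Y_\alpha(j) = q^{\alpha_j} t^{r_\alpha(j)}$ is the Cherednik eigenvalue. I would first verify at the operator level that $T_i$ specializes at $t=0$ to $\xi_i$; this is a direct computation from Definition \ref{finite hecke alg defn}:
$$T_i|_{t=0}(f) = s_i(f) + x_i\cdot\frac{f - s_i(f)}{x_i - x_{i+1}} = \frac{x_if - x_{i+1}s_i(f)}{x_i - x_{i+1}} = \xi_i(f).$$
Next I would show that the scalar $c_\alpha(i) := (1-t)Y_\alpha(i+1)/(Y_\alpha(i) - Y_\alpha(i+1))$ lies in $\mathbb{Q}[q][[t]] \cap \mathbb{Q}(q,t)$ and satisfies $\Upsilon(c_\alpha(i)) = 0$. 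Rewriting $c_\alpha(i) = (1-t)/(Y_\alpha(i)/Y_\alpha(i+1) - 1)$, the ratio $Y_\alpha(i)/Y_\alpha(i+1)$ carries a factor $q^{\alpha_i - \alpha_{i+1}}$ with $\alpha_i - \alpha_{i+1} \geq 1$, so a short case analysis on $r_\alpha(i) - r_\alpha(i+1)$ shows the $(q,t)$-expansion of $c_\alpha(i)$ has no constant term. Granted these two facts, applying $\Upsilon$ to the intertwiner relation and using the inductive hypothesis yields
$$\Upsilon(E_{s_i(\alpha)}) = \xi_i\,\Upsilon(E_\alpha) = \xi_i \mathcal{K}_\alpha = \mathcal{K}_{s_i(\alpha)},$$
which closes the induction.

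The hard part in this approach is the second verification: confirming that $c_\alpha(i)$ is admissible for $\Upsilon$ and vanishes at $q = t = 0$ depends on the precise convention for the $t$-exponents $r_\alpha(j)$ and involves some unenlightening bookkeeping. A cleaner alternative I would attempt in parallel is to apply $\Upsilon$ term by term to the Haglund--Haiman--Loehr formula of Theorem \ref{HHL}: at $q = t = 0$ each factor $(1-t)/(1-q^{\mathleg(u)+1}t^{a(u)+1})$ tends to $1$, while the monomial $q^{\maj(\widehat{\sigma})}t^{\coinv(\widehat{\sigma})}$ kills every non-attacking filling $\widehat{\sigma}$ except those with $\maj(\widehat{\sigma}) = \coinv(\widehat{\sigma}) = 0$. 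This reduces the theorem to matching the generating function over non-attacking fillings of $\alpha$ with no descents and no co-inversion triples against $\mathcal{K}_\alpha$, which is Mason's combinatorial formula for key polynomials. Either way, the substance of the proof is the vanishing in the recursive approach or the bijective content of this combinatorial identification.
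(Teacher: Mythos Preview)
The paper does not supply its own proof of this theorem: it is quoted as a result of Ion and then used (together with the HHL formula) to deduce the combinatorial formula for key polynomials in the subsequent Proposition. So there is no ``paper's proof'' to compare against; your proposal is an attempt to reprove a cited result.

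That said, there is a genuine gap in your first approach. Your base case asserts that for weakly decreasing $\alpha$ the triangularity axiom forces $E_\alpha = x^\alpha$, because $\alpha$ is Bruhat-minimal in its $\mathfrak{S}_n$-orbit. But triangularity is with respect to the full Bruhat order of Definition~2.3, not merely the orbit, and dominant weights are \emph{not} minimal there: for instance $(2,0) > (1,1)$ and $(3,1) > (2,2)$ via the second covering relation. And indeed $E_\alpha \neq x^\alpha$ for dominant $\alpha$ in general; already for $n=2$ one computes from Theorem~\ref{HHL} that
\[
E_{(2,0)} = x_1^2 + \frac{q(1-t)}{1-qt}\,x_1x_2.
\]
The conclusion $\Upsilon(E_\alpha)=x^\alpha$ for dominant $\alpha$ is still correct, but it requires an argument---e.g.\ showing via HHL that the unique non-attacking filling of a partition shape with $\maj=\coinv=0$ is the identity filling $\sigma(i,j)=i$---and cannot be read off from triangularity alone. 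Once the base case is repaired, the intertwiner induction can be made to work, though as you note the eigenvalue bookkeeping is convention-sensitive.

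Your second approach is sound and is precisely the converse of what the paper does immediately after stating the theorem: the paper takes Ion's result as input and specializes HHL to obtain $\mathcal{K}_\alpha = \sum_{\sigma \in \mathcal{L}(\alpha)} x^\sigma$, whereas you propose to take Mason's independent proof of that combinatorial formula as input and run the specialization of HHL to conclude Ion's theorem. Within the paper's internal logic this would be circular, but as a standalone argument (citing \cite{mason_2009}) it is valid.
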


From Ion's result we find a known combinatorial formula for the key polynomials using the HHL combinatorial formula (see \ref{HHL}) for the non-symmetric Macdonald polynomials. For $\alpha \in \mathbb{Z}_{\geq 0}^{n}$ denote by $\mathcal{L}(\alpha)$ the set of non-attacking labellings $\sigma: \alpha \rightarrow [n]$ such that $\maj(\widehat{\sigma}) = \coinv(\widehat{\sigma}) = 0.$

\begin{prop}
    For $\alpha \in \mathbb{Z}_{\geq 0}^{n}$, 
    $$\mathcal{K}_{\alpha} = \sum_{\sigma \in \mathcal{L}(\alpha) } x^{\sigma}.$$
\end{prop}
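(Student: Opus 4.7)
The plan is to apply the specialization map $\Upsilon$ directly to the Haglund--Haiman--Loehr formula (Theorem \ref{HHL}) for $E_\alpha$, invoking Ion's theorem (Theorem \ref{specialization of non sym Macd}) to identify the result with $\mathcal{K}_\alpha$. First I would verify that $E_\alpha \in \mathcal{O}$, so that $\Upsilon(E_\alpha)$ makes sense. Each HHL coefficient is of the form
\[
q^{\maj(\widehat{\sigma})} t^{\coinv(\widehat{\sigma})} \prod_{\substack{u \in dg'(\alpha) \\ \widehat{\sigma}(u) \neq \widehat{\sigma}(d(u))}} \frac{1-t}{1-q^{\mathleg(u)+1}t^{a(u)+1}},
\]
and since $a(u) + 1 \geq 1$, each factor $(1-q^{\mathleg(u)+1}t^{a(u)+1})^{-1}$ expands as a geometric series in $t$ with coefficients in $\mathbb{Q}[q]$. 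Thus the whole coefficient lies in $\mathbb{Q}[q][[t]] \cap \mathbb{Q}(q,t)$, placing $E_\alpha$ in $\mathcal{O}$.

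Next I would take the limit $t \to 0$ term by term. Each rational factor $\frac{1-t}{1-q^{\mathleg(u)+1}t^{a(u)+1}}$ is regular at $t = 0$ and evaluates to $1$, while the factor $t^{\coinv(\widehat{\sigma})}$ kills every term except those with $\coinv(\widehat{\sigma}) = 0$. This yields
\[
E_\alpha\big|_{t=0} \;=\; \sum_{\substack{\sigma:\alpha \rightarrow [n] \\ \text{non-attacking} \\ \coinv(\widehat{\sigma})=0}} x^{\sigma}\, q^{\maj(\widehat{\sigma})}.
\]
Then I would take $q \to 0$, which kills every remaining term except those with $\maj(\widehat{\sigma}) = 0$, leaving exactly $\sum_{\sigma \in \mathcal{L}(\alpha)} x^{\sigma}$. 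Combining with Ion's theorem gives $\mathcal{K}_\alpha = \Upsilon(E_\alpha) = \sum_{\sigma \in \mathcal{L}(\alpha)} x^{\sigma}$.

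The only subtle point — and the step I would take the most care over — is justifying that the $t \to 0$ limit really may be taken term-wise inside the sum over fillings (avoiding cancellations among the rational factors), and that the subsequent $q \to 0$ limit is likewise benign. Because every HHL coefficient individually lies in $\mathbb{Q}[q][[t]] \cap \mathbb{Q}(q,t)$ and the sum over non-attacking fillings is finite, the iterated limit $\lim_{q \to 0} \lim_{t \to 0}$ distributes over the sum, so this potential obstacle is actually mild. No appeal to Lemma \ref{continuity of specialization} or to any infinite limit is needed; the argument reduces to a straightforward substitution in the finite HHL expansion.
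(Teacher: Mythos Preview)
Your proposal is correct and follows essentially the same route as the paper: both verify $E_\alpha \in \mathcal{O}$ via the shape of the HHL coefficients, then compute the iterated limit $\lim_{q\to 0}\lim_{t\to 0}$ term by term to reduce to the fillings with $\maj=\coinv=0$, and conclude via Ion's theorem. Your treatment of the two limits is slightly more explicit, but the argument is the same.
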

\begin{proof}
    From the combinatorial formula for $E_{\alpha}$ (Theorem \ref{HHL}) we see that 
    $$E_{\alpha} = \sum_{\substack{\sigma: \alpha \rightarrow [n]\\ \text{non-attacking}}} x^{\sigma}q^{\maj(\widehat{\sigma})}t^{\coinv(\widehat{\sigma})} \prod_{\substack{u \in dg'(\alpha) \\ \widehat{\sigma}(u) \neq \widehat{\sigma}(d(u))}} \left( \frac{1-t}{1-q^{\mathleg(u)+1}t^{a(u)+1}} \right). $$
    Note that the values $\leg(u)$ and $\arm(u)$ are both non-negative so that $E_{\alpha} \in \mathcal{O}.$ Therefore, when we specialize $q \rightarrow \infty$ and $t \rightarrow 0$ we find that 
    $$\lim_{q \rightarrow 0} \lim_{ t \rightarrow 0} q^{\maj(\widehat{\sigma})}t^{\coinv(\widehat{\sigma})} \prod_{\substack{u \in dg'(\alpha) \\ \widehat{\sigma}(u) \neq \widehat{\sigma}(d(u))}} \left( \frac{1-t}{1-q^{\mathleg(u)+1}t^{a(u)+1}} \right) = \mathbbm{1}\left(\maj(\widehat{\sigma}) = \coinv(\widehat{\sigma}) = 0\right).$$
    Hence, from Theorem \ref{specialization of non sym Macd}
    $$\mathcal{K}_{\alpha} = \Upsilon(E_{\alpha}) = \sum_{\substack{\sigma: \alpha \rightarrow [n]\\ \text{non-attacking}\\ \maj(\widehat{\sigma}) = 0 \\ \coinv(\widehat{\sigma}) = 0 }} x^{\sigma} = \sum_{\sigma \in \mathcal{L}(\alpha) } x^{\sigma}.$$
\end{proof}

\begin{remark}
    Note that $\maj(\widehat{\sigma}) = 0$ is equivalent to $\Des(\widehat{\sigma}) = \emptyset$ which in turn is equivalent to $\widehat{\sigma}(u) \leq \widehat{\sigma}(d(u))$ i.e. $\widehat{\sigma}$ is weakly decreasing upwards along columns. The requirement that $\coinv(\widehat{\sigma}) = 0$ is equivalent to the statement that $\widehat{\sigma}$ has no co-inversion triples. Thus a non-attacking filling $\sigma$ is in $\mathcal{L}(\alpha)$ if $\widehat{\sigma}$ is weakly decreasing upwards along columns and has no co-inversion triples.
\end{remark}

As an easy application of Ion's result we may compute the specializations of all $\widetilde{E}_{(\mu|\emptyset)}.$

\begin{prop}\label{special case of specialization thm}
    For all $\mu \in \Compred$, $\widetilde{E}_{(\mu|\emptyset)} \in \mathcal{O}$ and
    $$\Upsilon(\widetilde{E}_{(\mu|\emptyset)}) = \mathcal{K}_{\mu}.$$
\end{prop}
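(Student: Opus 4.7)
The plan is to combine three earlier results: the first bullet of Proposition \ref{macd recursion}, which realizes $\widetilde{E}_{(\mu|\emptyset)}$ as the Ion-Wu limit of ordinary non-symmetric Macdonald polynomials; Ion's specialization theorem \ref{specialization of non sym Macd}, which identifies the $q = t = 0$ specialization of each $E_{\mu*0^n}$ with a key polynomial; and the continuity lemma \ref{continuity of specialization}, which says that $\Upsilon$ commutes with Ion-Wu limits provided the approximants lie in $\mathcal{O}$.

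First I would verify the hypothesis of Lemma \ref{continuity of specialization}, namely that each $E_{\mu * 0^n}$ lies in $\sP_{\ell(\mu) + n}^{+} \cap \mathcal{O}$. This is immediate from the HHL formula (Theorem \ref{HHL}): the monomials appearing have non-negative exponents, and each coefficient is a product of factors $(1-t)/(1 - q^{\mathleg(u)+1} t^{a(u)+1})$ with $\mathleg(u), a(u) \geq 0$, multiplied by $q^{\maj(\widehat{\sigma})}t^{\coinv(\widehat{\sigma})}$. Expanding the geometric series in $t$ shows each coefficient lies in $\mathbb{Q}[q][[t]] \cap \mathbb{Q}(q,t)$, so $E_{\mu*0^n} \in \mathcal{O}$.

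Next, by the first bullet of Proposition \ref{macd recursion} we have $\widetilde{E}_{(\mu|\emptyset)} = \lim_n E_{\mu * 0^n}$ in the Ion-Wu sense. Applying Lemma \ref{continuity of specialization} to the sequence $f_n := E_{\mu * 0^n}$, we conclude that $\widetilde{E}_{(\mu|\emptyset)} \in \mathcal{O}$ and
\[
\Upsilon\!\left(\widetilde{E}_{(\mu|\emptyset)}\right) \;=\; \lim_n \Upsilon(E_{\mu * 0^n}).
\]

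Finally I would compute the right-hand side. By Theorem \ref{specialization of non sym Macd}, $\Upsilon(E_{\mu*0^n}) = \mathcal{K}_{\mu * 0^n}$, and by the stability property of key polynomials noted in the remark following their definition, $\mathcal{K}_{\mu * 0^n} = \mathcal{K}_{\mu}$ as elements of $\mathbb{Z}_{\geq 0}[x_1, x_2, \ldots] \subset \sP_{as}^{+}$. Hence the sequence is eventually constant and the limit is $\mathcal{K}_\mu$, giving $\Upsilon(\widetilde{E}_{(\mu|\emptyset)}) = \mathcal{K}_\mu$. There is no real obstacle here; the main point to check carefully is the integrality of the expansion coefficients of $E_{\mu*0^n}$ in $\mathbb{Q}[q][[t]]$, but this is transparent from the HHL denominator factors.
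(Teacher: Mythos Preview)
Your argument is correct and follows essentially the same route as the paper's proof: both use Lemma \ref{continuity of specialization} together with Ion's theorem \ref{specialization of non sym Macd} and the stability $\mathcal{K}_{\mu*0^n} = \mathcal{K}_\mu$. The only cosmetic difference is that the paper verifies $\widetilde{E}_{(\mu|\emptyset)} \in \mathcal{O}$ directly from the stable-limit combinatorial formula (Theorem \ref{convergence of macdonald}), whereas you verify $E_{\mu*0^n} \in \mathcal{O}$ from the finite HHL formula and then obtain $\widetilde{E}_{(\mu|\emptyset)} \in \mathcal{O}$ as part of the conclusion of Lemma \ref{continuity of specialization}; both are equally valid.
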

\begin{proof}
Let $\mu \in \Compred.$ From the combinatorial formula Corollary \ref{convergence of macdonald} we may observe directly that $\widetilde{E}_{(\mu|\emptyset)} \in \mathcal{O}$. To see this note that each of the scalar coefficients of the expansion of $\widetilde{E}_{(\mu|\emptyset)}$ has the form 
$$q^{a}t^{b}\prod_{i}\left( \frac{1-t}{1-q^{c_{i}}t^{d_{i}}} \right)$$ for some $a,b,c_i,d_i \geq 0.$ By expanding the denominators 
$$\frac{1}{1-q^{c_{i}}t^{d_{i}}} = \sum_{m \geq 0}q^{mc_{i}}t^{md_{i}}$$ 
we see that 
$$q^{a}t^{b}\prod_{i}\left( \frac{1-t}{1-q^{c_{i}}t^{d_{i}}} \right) \in \mathbb{Q}[q][[t]]$$ as required.

As $\Upsilon(\widetilde{E}_{\mu})$ is now well defined, we may compute directly using Lemma \ref{continuity of specialization} to find
    \begin{align*}
    &\Upsilon(\widetilde{E}_{\mu})\\
    &= \lim_{n} \Upsilon(E_{\mu*0^n})\\
    &= \lim_{n} \mathcal{K}_{\mu*0^n}\\
    &= \lim_{n} \mathcal{K}_{\mu} \\
    &= \mathcal{K}_{\mu}.\\
    \end{align*}
\end{proof}

In the next lemma we will formalize the notion that the operators $\xi_i,W_k$ are the $q = t = 0$ specializations of $T_i, \epsilon_k$ respectively. This result is standard but we will include its proof for the sake of completeness. 

\begin{lem}\label{specializations of Hecke maps}
    For all $k \geq 0$ and $i \geq 1$,
    $\Upsilon \circ T_i|_{\mathcal{O}} = \xi_{i} \circ \Upsilon|_{\mathcal{O}}$
    and $\Upsilon \circ \epsilon_k|_{\mathcal{O}} = W_{k}\circ \Upsilon|_{\mathcal{O}}.$
    
\end{lem}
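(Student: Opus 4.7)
The plan is to prove both identities by reducing to explicit operator formulas: first establish the $T_i$ identity directly from its defining formula, then use the explicit Hecke-symmetrizer expression for $\epsilon_k^{(n)}$ together with the known recursion $W_k^{(n)} = \xi_{n-1}\cdots \xi_{k+1}W_{k+1}^{(n)}$ to handle $\epsilon_k$ in finitely many variables, and finally pass to the stable limit via Lemma \ref{continuity of specialization}.

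For the first identity, I would start from the explicit formula $T_i(f) = s_i(f) + (1-t)x_i \cdot \frac{f - s_i(f)}{x_i - x_{i+1}}$ and observe that the operator $f \mapsto \frac{f - s_i(f)}{x_i - x_{i+1}}$ is the ordinary divided difference, mapping polynomial expressions to polynomial expressions with coefficients in the same scalar ring. This shows that $T_i$ preserves $\mathcal{O}$, since its formula only introduces factors of $(1-t)$. Applying $\Upsilon$ commutes with $s_i$ and with rational operations in the $x$'s (as these are unaffected by specializing $q,t$), so
\[
\Upsilon(T_i(f)) = s_i(\Upsilon(f)) + x_i \cdot \frac{\Upsilon(f) - s_i(\Upsilon(f))}{x_i - x_{i+1}} = \frac{x_i \Upsilon(f) - x_{i+1} s_i(\Upsilon(f))}{x_i - x_{i+1}} = \xi_i(\Upsilon(f)).
\]

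For the second identity, I would work first with finitely many variables, using the explicit expression
\[
\epsilon_k^{(n)} = \frac{1}{[n-k]_t!} \sum_{\sigma \in \mathfrak{S}_{(1^k,n-k)}} t^{\binom{n-k}{2} - \ell(\sigma)}\, T_\sigma
\]
from Definition \ref{finite hecke alg defn}. Since $T_i$ preserves $\mathcal{O}$, iterating shows $T_\sigma$ preserves $\mathcal{O}$ and thus so does $\epsilon_k^{(n)}$. On applying $\Upsilon$, note $[n-k]_t!|_{t=0} = 1$, and the scalar $t^{\binom{n-k}{2}-\ell(\sigma)}$ vanishes at $t=0$ unless $\sigma = w_0$ is the longest element of $\mathfrak{S}_{(1^k,n-k)}$. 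By the first part of the lemma applied repeatedly along a reduced word, $\Upsilon(T_{w_0}(f)) = \xi_{w_0}(\Upsilon(f))$, and iterating the recursion $W_k^{(n)} = \xi_{n-1}\cdots\xi_{k+1}W_{k+1}^{(n)}$ identifies $\xi_{w_0}$ with $W_k^{(n)}$. Hence $\Upsilon \circ \epsilon_k^{(n)} = W_k^{(n)} \circ \Upsilon$ on $\sP_n^+ \cap \mathcal{O}$.

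To lift this to the stable limit, pick $f \in \mathcal{O}$ together with a convergent Ion–Wu approximating sequence $f_n \in \sP_n^+\cap \mathcal{O}$ with $\lim_n f_n = f$. Then $\epsilon_k^{(n)}(f_n) \to \epsilon_k(f)$, each $\epsilon_k^{(n)}(f_n)$ lies in $\sP_n^+\cap\mathcal{O}$, and Lemma \ref{continuity of specialization} gives $\Upsilon(\epsilon_k(f)) = \lim_n \Upsilon(\epsilon_k^{(n)}(f_n)) = \lim_n W_k^{(n)}(\Upsilon(f_n)) = W_k(\Upsilon(f))$, where in the last step I use $\Upsilon(f_n) \to \Upsilon(f)$ (again by Lemma \ref{continuity of specialization}) and the definition $W_k = \lim_n W_k^{(n)}$. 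The main subtle point is the careful bookkeeping showing that the coefficients stay in $\mathbb{Q}[q][[t]]\cap \mathbb{Q}(q,t)$ throughout, so that each instance of $\Upsilon$ is actually defined; once this is set up, the rest is a straightforward combination of the explicit formula for $\epsilon_k^{(n)}$ and the continuity lemma.
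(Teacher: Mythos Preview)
Your proposal is correct and follows essentially the same route as the paper: a direct computation for $T_i$, then the explicit Hecke-symmetrizer formula for $\epsilon_k^{(n)}$ (with only the longest element surviving at $t=0$), followed by passage to the stable limit via Lemma~\ref{continuity of specialization}. The only minor difference is that the paper uses the specific truncations $\Xi^{(n)}(f)$ as the approximating sequence (so that $\epsilon_k(f)=\lim_n \epsilon_k^{(n)}(\Xi^{(n)}(f))$ holds by definition of the Ion--Wu operator limit), whereas you invoke a general approximating sequence $f_n\to f$; to make your version airtight you should take $f_n=\Xi^{(n)}(f)$, since the statement $\epsilon_k^{(n)}(f_n)\to\epsilon_k(f)$ for an arbitrary such sequence is not immediate from the definitions.
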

\begin{proof}
    Let $f= f(x;q,t) \in \sP_{as}^{+}\cap \mathcal{O}.$ Let $i \geq 1$ and $k \geq 0.$

    First, we have 
    \begin{align*}
        &\Upsilon \circ T_i(f) \\
        &=\Upsilon \left(s_i(f) + (1-t)x_i\frac{f-s_i(f)}{x_i-x_{i+1}} \right) \\
        &= s_i\Upsilon(f) + (1-0) x_i\frac{\Upsilon(f)-s_i\Upsilon(f)}{x_i-x_{i+1}}\\
        &= \left(s_i +x_i\frac{1-s_i}{x_i-x_{i+1}}\right)f(x;0,0) \\
        &= \left( \frac{(x_i-x_{i+1})s_i + x_i(1-s_i)}{x_i-x_{i+1}} \right) f(x;0,0) \\
        &= \left( \frac{x_i - x_{i+1}s_i}{x_i-x_{i+1}} \right) f(x;0,0) \\
        &= \xi_{i} f(x;0,0) \\
        &= \xi_{i} \circ \Upsilon(f).\\
    \end{align*}
    
    If $f \in \sP(k)^{+}$ then 
    $$ \Upsilon \circ \epsilon_k(f) = \Upsilon(f) $$ and 
    $$W_{k}\circ \Upsilon(f) = \Upsilon(f).$$ Thus we may assume that $f \in \sP(k+r)^{+}$ for some $r \geq 1$ in which case using Lemma \ref{continuity of specialization} we see
    \begin{align*}
        &\Upsilon \circ \epsilon_k(f) \\
        &=\Upsilon\left( \lim_{n}\epsilon_{k}^{(n)}(\Xi^{(n)}(f))\right) \\
        &= \Upsilon\left( \lim_{n}\frac{1}{[n-k]_t!} \sum_{\sigma \in \mathfrak{S}_{(1^k,n-k)}} t^{{n-k \choose 2} -\ell(\sigma)}T_{\sigma}\Xi^{(n)}(f) \right)\\
        &=  \lim_{n}\Upsilon \left(\frac{1}{[n-k]_t!} \sum_{\sigma \in \mathfrak{S}_{(1^k,n-k)}} t^{{n-k \choose 2} -\ell(\sigma)}T_{\sigma}\Xi^{(n)}(f) \right) \\
        &= \lim_{n} \sum_{\sigma \in \mathfrak{S}_{(1^k,n-k)}} \mathbbm{1}\left({n-k \choose 2} = \ell(\sigma)\right)\Upsilon \left(T_{\sigma}\Xi^{(n)}(f) \right)\\
        &= \lim_{n} (T_{n-1}\cdots T_{k+1})\cdots (T_{n-1}\cdots T_{k+r})\Upsilon(\Xi^{(n)}(f))\\
        &= \lim_{n} (\xi_{n-1}\cdots \xi_{k+1})\cdots (\xi_{n-1}\cdots \xi_{k+r})f(x_1,\ldots, x_n,0,\ldots ;0,0)\\
        &= \lim_{n} W_{k}^{(n)}f(x_1,\ldots, x_n,0,\ldots ;0,0)\\
        &= W_{k}\circ\Upsilon(f)\\
    \end{align*}

\end{proof}

\section{Almost Symmetric Schur Functions}

The stable-limit non-symmetric Macdonald functions $\widetilde{E}_{(\mu|\lambda)}$ are seen, from Proposition \ref{macd recursion}, to be generated by applying successive partial-symmetrization operators to the functions $\widetilde{E}_{(\mu*\lambda| \emptyset)}.$ Given that the operators $T_i,\epsilon_k$ specialize to the $\xi_i,W_k$ respectively, we may define a set of almost symmetric functions $s_{(\mu|\lambda)}$ analogously.

\begin{defn}\label{almost sym schur def}
    Define the \textbf{\textit{almost symmetric Schur functions}}, $s_{(\mu|\lambda)}= s_{(\mu|\lambda)}(x_1,x_2,\ldots)$, for $(\mu|\lambda) \in \Sigma$ by the following recursive formula:
    \begin{itemize}
        \item $s_{(\mu|\emptyset)} = \mathcal{K}_{\mu}$  
        \item If $\mu_r \geq \lambda_1$ then 
        $$s_{(\mu_1,\ldots, \mu_{r-1}| \mu_r, \lambda_1,\ldots, \lambda_{\ell})} = W_{r-1}(s_{(\mu_1,\ldots, \mu_{r-1}, \mu_r| \lambda_1,\ldots, \lambda_{\ell})}).$$
    \end{itemize}
\end{defn}

We will now compute a few non-trivial examples of almost symmetric Schur functions $s_{(\mu|\lambda)}.$

\begin{example}
    Here we calculate $s_{(2|3,1)}$ directly using the operators $\xi_i$ and $W_k$:
    \begin{align*}
        &s_{(2|3,1)}\\
        &= W_{1}W_{2}(s_{(2,3,1|\emptyset)})\\
        &= W_{1}W_{2} \xi_1 (s_{(3,2,1|\emptyset)})\\
        &= W_{1}W_{2}\xi_1 (x_1^3x_2^2x_3)\\
        &= W_{1}W_{2} ( x_1^3x_2^2x_3 + x_1^2x_2^3x_3)\\
        &= W_{1}(x_1^3x_2^2s_{1}[\mathfrak{X}_2] + x_1^2x_2^3s_{1}[\mathfrak{X}_2])\\
        &= x_1^{3} s_{(2,1)}[\mathfrak{X}_1] + x_1^{2} s_{(3,1)}[\mathfrak{X}_1]\\
    \end{align*}
\end{example}

\begin{example}
    Here we give a list of some examples of almost symmetric Schur functions that are neither symmetric Schur functions nor key polynomials.
    \begin{itemize}
        \item $s_{(0,1|2)} = x_1^2x_2 + x_1^2s_{1}[\mathfrak{X}_2] + x_2^2x_1 + x_2^2 s_{1}[\mathfrak{X}_2] + x_1s_2[\mathfrak{X}_2] + x_2s_2[\mathfrak{X}_2] + 2x_1x_2s_1[\mathfrak{X}_2]$
        \item $s_{(2|3,1)} = x_1^{3} s_{(2,1)}[\mathfrak{X}_1] + x_1^{2} s_{(3,1)}[\mathfrak{X}_1]$
        \item $s_{(2,1|1)} = x_1^2x_2 s_1[\mathfrak{X}_2]$
        \item $s_{(1,2|1)} = x_1^{2}x_2s_{1}[\mathfrak{X}_2] + x_1x_2^2s_{1}[\mathfrak{X}_2]$
        \item $s_{(1|2,1)} = x_1^{2}s_{(1,1)}[\mathfrak{X}_1] + x_1 s_{(2,1)}[\mathfrak{X}_1].$
    \end{itemize}
\end{example}

\begin{remark}
We note that from the above recursion it follows that for any $\lambda \in \Par$,
$s_{(\emptyset|\lambda)} = s_{\lambda}.$ Thus the almost symmetric Schur functions interpolate between the key polynomials and the Schur functions in infinitely many variables $x_1,x_2, \ldots.$ Lapointe in \cite{lapointe2022msymmetric} defines the \textbf{\textit{m-symmetric Schur functions}} $s_{(a;\lambda)}(x;t).$ These functions have the property that $s_{(a;\emptyset)}(x;t) = H_{a}(x;t)$ (the non-symmetric Hall-Littlewood polynomial) and $s_{(\emptyset;\lambda)}(x;t) = s_{\lambda}(x)$ similarly to the functions $s_{(\mu|\lambda)}(x)$ defined above. Further, they give a basis for $\sP(m)^{+}.$ However, it is not clear to this author how Lapointe's m-symmetric Schur functions are related to the almost symmetric Schur functions in this paper. Any proof that relates these two types of functions would likely be nontrivial and combinatorial in nature.
\end{remark}

We are now ready to compute the specializations of the stable-limit non-symmetric Macdonald functions $\widetilde{E}_{(\mu|\lambda)}$ at $q = t = 0.$

\begin{thm}\label{specialization theorem}
    For $(\mu|\lambda) \in \Sigma$, $\widetilde{E}_{(\mu|\lambda)} \in \mathcal{O}$ and
    $$\Upsilon(\widetilde{E}_{(\mu|\lambda)}) = s_{(\mu|\lambda)}(x).$$
\end{thm}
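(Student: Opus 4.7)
The plan is to induct on $\ell(\lambda)$, exploiting the tight parallel between the recursion for $\widetilde{E}_{(\mu|\lambda)}$ (third bullet of Proposition \ref{macd recursion}) and the defining recursion for $s_{(\mu|\lambda)}$ (Definition \ref{almost sym schur def}), coupled via the intertwining relation $\Upsilon \circ \epsilon_k = W_k \circ \Upsilon$ from Lemma \ref{specializations of Hecke maps}. The base case $\ell(\lambda) = 0$ is exactly Proposition \ref{special case of specialization thm}, which already delivers both the membership $\widetilde{E}_{(\mu|\emptyset)} \in \mathcal{O}$ and the identity $\Upsilon(\widetilde{E}_{(\mu|\emptyset)}) = \mathcal{K}_\mu = s_{(\mu|\emptyset)}$.

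For the inductive step, I fix $(\mu|\lambda) \in \Sigma$ with $\ell(\lambda) = \ell \geq 1$, write $\lambda = (\lambda_1,\ldots,\lambda_\ell)$, and set $(\mu'|\lambda') := (\mu * (\lambda_1) \,|\, (\lambda_2,\ldots,\lambda_\ell))$. Because $\lambda_1 \geq 1$, the composition $\mu'$ is reduced, so $(\mu'|\lambda') \in \Sigma$ with $\ell(\lambda') = \ell - 1$. Setting $r = \ell(\mu)$ so that $\ell(\mu') = r+1$, both side conditions in Proposition \ref{macd recursion} hold for $\widetilde{E}_{(\mu'|\lambda')}$: the last part of $\mu'$ is $\lambda_1 \geq \lambda_2$ (partition), and the penultimate part of $\mu'$ is $\mu_r \neq 0$ by reducedness of $\mu$ when $r \geq 1$ (vacuous when $r=0$). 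This gives
$$\widetilde{E}_{(\mu|\lambda)} = \epsilon_r\bigl(\widetilde{E}_{(\mu'|\lambda')}\bigr),$$
and the same inequality $\lambda_1 \geq \lambda_2$ lets me apply Definition \ref{almost sym schur def} to get
$$s_{(\mu|\lambda)} = W_r\bigl(s_{(\mu'|\lambda')}\bigr).$$
Assuming $\widetilde{E}_{(\mu'|\lambda')} \in \mathcal{O}$ and $\Upsilon(\widetilde{E}_{(\mu'|\lambda')}) = s_{(\mu'|\lambda')}$ by induction, Lemma \ref{specializations of Hecke maps} then closes the loop:
$$\Upsilon(\widetilde{E}_{(\mu|\lambda)}) = \Upsilon\bigl(\epsilon_r(\widetilde{E}_{(\mu'|\lambda')})\bigr) = W_r\bigl(\Upsilon(\widetilde{E}_{(\mu'|\lambda')})\bigr) = W_r(s_{(\mu'|\lambda')}) = s_{(\mu|\lambda)}.$$

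The only non-routine point is propagating the $\mathcal{O}$-membership through the inductive step, i.e.\ checking that $\epsilon_r$ preserves $\mathcal{O}$. I would handle this exactly as in the proof of Lemma \ref{specializations of Hecke maps}: write $\epsilon_r(f) = \lim_n \epsilon_r^{(n)}(\Xi^{(n)}(f))$, note that $T_i$ visibly preserves $\mathcal{O}$ since its defining formula introduces no denominators in $q$ or $t$, and observe that the scalars $t^{\binom{n-r}{2} - \ell(\sigma)}/[n-r]_t!$ appearing in $\epsilon_r^{(n)}$ lie in $\mathbb{Q}[q][[t]]\cap \mathbb{Q}(q,t)$ because the $t$-exponent is non-negative and $[n-r]_t!$ has constant term $1$. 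Lemma \ref{continuity of specialization} then upgrades this to the limit, giving $\widetilde{E}_{(\mu|\lambda)} = \epsilon_r(\widetilde{E}_{(\mu'|\lambda')}) \in \mathcal{O}$. With this in hand the induction closes and the theorem follows.
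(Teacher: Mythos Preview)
Your proof is correct and essentially the same as the paper's. The only differences are cosmetic: the paper applies $\epsilon_{\ell(\mu)}$ in one shot to $\widetilde{E}_{(\mu*\lambda|\emptyset)}$ rather than inducting step by step on $\ell(\lambda)$, and it verifies that $\epsilon_k$ preserves $\mathcal{O}$ via the Jing vertex operator description of $\epsilon_k$ (Lemma~\ref{idempotent lemma}) rather than via your $T_i$-expansion of $\epsilon_k^{(n)}$.
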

\begin{proof}
    Let $(\mu|\lambda) \in \Sigma.$ In order to show that $\widetilde{E}_{(\mu|\lambda)} \in \mathcal{O}$ it suffices by induction to verify that each $\epsilon_{k}(f) \in \mathcal{O}$ for every $f \in \mathcal{O}.$ However, this is easy to see using the explicit formula for the action of $\epsilon_{k}$ using the Jing vertex operators $\mathscr{B}_{r}$ (see Definition \ref{jing vertex op} and Lemma \ref{idempotent lemma}). We now proceed by direct computation using Lemma \ref{specializations of Hecke maps} and Proposition \ref{special case of specialization thm}.
    \begin{align*}
        &\Upsilon(\widetilde{E}_{(\mu|\lambda)})\\
        &= \Upsilon(\epsilon_{\ell(\mu)}(\widetilde{E}_{(\mu*\lambda|\emptyset)}))\\
        &= W_{\ell(\mu)}(\Upsilon(\widetilde{E}_{(\mu*\lambda|\emptyset)}))\\
        &= W_{\ell(\mu)}(\mathcal{K}_{\mu*\lambda})\\
        &= W_{\ell(\mu)}(s_{(\mu*\lambda|\emptyset}))\\
        &= s_{(\mu|\lambda)}.\\
    \end{align*}
\end{proof}

\subsection{Combinatorial Formula for Almost Symmetric Schur Functions}

In this section we will compute an explicit combinatorial formula for the monomial expansion of the almost symmetric Schur functions. Further, we will use this expansion to show that a generalization of the classical Kostka coefficients for Schur functions are non-negative integers. 

\begin{prop}\label{almost sym schur from key}
    For $(\mu|\lambda) \in \Sigma$,
    $$s_{(\mu|\lambda)} = \lim_{n} \mathcal{K}_{\mu*0^{n}*\rev(\lambda)}.$$
\end{prop}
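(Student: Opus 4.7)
Set $r = \ell(\mu)$ and $k = \ell(\lambda)$. I plan first to show $s_{(\mu|\lambda)} = W_r(\mathcal{K}_{\mu*\lambda})$ as an identity in $\sP_{as}^{+}$, then establish the finite-variable refinement $W_r^{(N)}(\mathcal{K}_{\mu*\lambda}) = \mathcal{K}_{\mu*0^{N-r-k}*\rev(\lambda)}$ for $N \geq r+k$, and finally pass to the limit.

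For the first identity, iterating the defining recursion (where the required inequality $\lambda_i \geq \lambda_{i+1}$ holds automatically since $\lambda$ is a partition) yields
$$s_{(\mu|\lambda)} = W_r W_{r+1}\cdots W_{r+k-1}(\mathcal{K}_{\mu*\lambda}).$$
This product collapses to $W_r(\mathcal{K}_{\mu*\lambda})$ via the rule $W_iW_j = W_{\min(i,j)}$, which is the $q=t=0$ specialization of the identity $\epsilon_i\epsilon_j = \epsilon_{\min(i,j)}$ from Lemma \ref{idempotent lemma}. Alternatively, one may apply $\Upsilon$ directly to Theorem \ref{stable-limit non-sym MacD function defn} via Theorem \ref{specialization theorem}, Lemma \ref{continuity of specialization}, and Lemma \ref{specializations of Hecke maps}.

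For the heart of the proof, I would prove the following general lemma: for any composition $\alpha = (\alpha_1,\ldots,\alpha_N)$ with weakly decreasing tail $\alpha_{r+1}\geq \alpha_{r+2} \geq\cdots\geq\alpha_N$,
$$W_r^{(N)}(\mathcal{K}_\alpha) = \mathcal{K}_{(\alpha_1,\ldots,\alpha_r,\alpha_N,\alpha_{N-1},\ldots,\alpha_{r+1})}.$$
The argument is an induction on $N-r$ using the recursion $W_r^{(N)} = \xi_{N-1}\xi_{N-2}\cdots\xi_{r+1}W_{r+1}^{(N)}$ together with the standard identities $\xi_i\mathcal{K}_\gamma = \mathcal{K}_{s_i\gamma}$ when $\gamma_i > \gamma_{i+1}$ and $\xi_i\mathcal{K}_\gamma = \mathcal{K}_\gamma$ when $\gamma_i = \gamma_{i+1}$. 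After $W_{r+1}^{(N)}$ has been applied by the inductive hypothesis, the operators $\xi_{r+1},\xi_{r+2},\ldots,\xi_{N-1}$ bubble the value $\alpha_{r+1}$ rightward into position $N$; ties in the tail are absorbed harmlessly, since the swap case and the fixed case both produce the same target composition. Applying this to $\alpha = \mu*\lambda*0^{N-r-k}$ (whose tail $\lambda_1,\ldots,\lambda_k,0,\ldots,0$ is weakly decreasing) together with the stability $\mathcal{K}_{\alpha*0^m} = \mathcal{K}_\alpha$ yields the finite-$N$ identity.

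Taking $N\to\infty$ and reindexing $n = N-r-k$ gives $W_r(\mathcal{K}_{\mu*\lambda}) = \lim_n \mathcal{K}_{\mu*0^n*\rev(\lambda)}$, which combined with the first step proves the claim. The principal technical step is the bubble-sort induction, whose only subtlety is the correct handling of equal entries in the tail.
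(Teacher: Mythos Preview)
Your proposal is correct and follows essentially the same route as the paper's proof: both reduce to $s_{(\mu|\lambda)} = W_{\ell(\mu)}(\mathcal{K}_{\mu*\lambda})$ by collapsing the recursion, expand the finite symmetrizer $W_{\ell(\mu)}^{(N)}$ as a product of isobaric divided difference operators, and then apply these to the key polynomial to obtain $\mathcal{K}_{\mu*0^{N-r-k}*\rev(\lambda)}$ before passing to the limit. Your bubble-sort induction is a slightly more explicit packaging of the step the paper records as the single line
\[
(\xi_{\ell(\mu)+\ell(\lambda)+n-1}\cdots \xi_{\ell(\mu)+1})\cdots (\xi_{\ell(\mu)+\ell(\lambda)+ n-1}\cdots \xi_{\ell(\mu)+\ell(\lambda)})(\mathcal{K}_{\mu*\lambda*0^{n}}) = \mathcal{K}_{\mu*0^{n}*\rev(\lambda)},
\]
but the underlying mechanism is identical.
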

\begin{proof}
We proceed by direct calculation:
    \begin{align*}
        &s_{(\mu|\lambda)}\\
        &=W_{\ell(\mu) }\cdots W_{\ell(\mu)+\ell(\lambda)} s_{(\mu*\lambda|\emptyset)}\\
        &=W_{\ell(\mu) } s_{(\mu*\lambda|\emptyset)}\\
        &= W_{\ell(\mu)} \mathcal{K}_{\mu*\lambda} \\
        &= \lim_{n} W_{\ell(\mu)}^{(\ell(\mu) + \ell(\lambda) + n)}(\mathcal{K}_{\mu*\lambda*0^{n}}) \\
        &= \lim_{n} (\xi_{\ell(\mu)+\ell(\lambda)+n-1}\cdots \xi_{\ell(\mu)+1})\cdots (\xi_{\ell(\mu)+\ell(\lambda)+ n-1}\cdots \xi_{\ell(\mu)+\ell(\lambda)})(\mathcal{K}_{\mu*\lambda*0^{n}}) \\
        &= \lim_{n}\mathcal{K}_{\mu*0^{n}*\rev(\lambda)}.\\
    \end{align*}
\end{proof}

As an immediate consequence we get the following:

\begin{cor}\label{almost sym schur are basis}
    The set $\{s_{(\mu|\lambda)}(x)|(\mu|\lambda) \in \Sigma\}$ is a homogeneous $\mathbb{Q}$-basis for $\sP_{as,\mathbb{Q}}^{+}.$
\end{cor}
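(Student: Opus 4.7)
Plan. The approach is to establish a unitriangular expansion of $s_{(\mu|\lambda)}$ in a monomial-Schur basis of $\sP_{as,\mathbb{Q}}^+$, then conclude by a dimension count in each homogeneous degree.

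First, by Proposition \ref{almost sym schur from key} each $s_{(\mu|\lambda)}$ arises as a stable limit of key polynomials, which are homogeneous with nonnegative integer coefficients. Hence $s_{(\mu|\lambda)} \in \sP_{as,\mathbb{Q}}^+$ and is homogeneous of degree $|\mu|+|\lambda|$. Since $\sP_{as,\mathbb{Q}}^+$ has the $\mathbb{Q}$-basis $\{x^\nu m_\kappa[X] : (\nu|\kappa)\in\Sigma\}$, the $\mathbb{Q}$-dimension of each graded piece $(\sP_{as,\mathbb{Q}}^+)_d$ equals $|\{(\mu|\lambda)\in\Sigma : |\mu|+|\lambda|=d\}|$, matching the number of candidate basis elements in degree $d$. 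It therefore suffices to prove linear independence of $\{s_{(\mu|\lambda)} : |\mu|+|\lambda|=d\}$ for each $d$.

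For linear independence, I exploit that $\rev(\lambda)$ is weakly increasing, so the composition $\alpha := \mu*0^n*\rev(\lambda)$ is weakly increasing on positions $\ell(\mu)+1,\ldots,\ell(\mu)+n+\ell(\lambda)$. Consequently $\mathcal{K}_\alpha$ is symmetric in $x_{\ell(\mu)+1},\ldots,x_{\ell(\mu)+n+\ell(\lambda)}$, which is a standard property of key polynomials extending the remark on symmetry recalled in the excerpt to the weakly increasing case. Combined with the Bruhat triangularity $\mathcal{K}_\alpha = x^\alpha + \sum_{\beta<\alpha}K_{\alpha,\beta}x^\beta$, the symmetric orbit of the leading monomial $x^\alpha$ under the trailing symmetry must appear with coefficient $1$, so
$$\mathcal{K}_{\mu*0^n*\rev(\lambda)} = x^\mu \cdot m_\lambda(x_{\ell(\mu)+1},\ldots,x_{\ell(\mu)+n+\ell(\lambda)}) + \text{(Bruhat-lower terms)}.$$
Passing to the Ion-Wu limit via Proposition \ref{almost sym schur from key} yields the triangular expansion
$$s_{(\mu|\lambda)} = x^\mu m_\lambda[\mathfrak{X}_{\ell(\mu)}] + \sum_{(\nu|\kappa)\prec(\mu|\lambda)} a^{(\nu|\kappa)}_{(\mu|\lambda)}\, x^\nu m_\kappa[\mathfrak{X}_{\ell(\nu)}]$$
for an appropriate partial order $\prec$ on $\Sigma$ induced from Bruhat. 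The set $\{x^\nu m_\kappa[\mathfrak{X}_{\ell(\nu)}] : (\nu|\kappa)\in\Sigma\}$ is related to the standard basis by a unitriangular change of basis (using $m_\kappa[X] = m_\kappa[\mathfrak{X}_{\ell(\nu)}]$ + lower terms in $x_1,\ldots,x_{\ell(\nu)}$), so triangularity is preserved and the basis property follows.

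The main obstacle I anticipate is formulating the partial order $\prec$ on $\Sigma$ precisely and verifying that the Bruhat-lower terms, after regrouping by the trailing symmetry and passing to the limit, yield pairs strictly below $(\mu|\lambda)$ in $\prec$. This is concrete combinatorial bookkeeping rather than a conceptual difficulty; a natural candidate is a refinement of the dominance order on the combined multiset $\{\text{parts of }\mu\}\sqcup\{\text{parts of }\lambda\}$, supplemented by a secondary criterion tracking the position structure of $\mu$.
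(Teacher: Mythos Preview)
Your unitriangularity approach has a genuine gap that is more than bookkeeping. The paper's own remark immediately following this corollary flags exactly this as open: finding a monomial ordering on $\Sigma$ making the $s_{(\mu|\lambda)}$ unitriangular ``would need to specialize to both the Bruhat ordering on finite variable monomials and the dominance ordering on partitions,'' and no such ordering is constructed there. The Bruhat order on $\mathbb{Z}^{\ell(\mu)+n+\ell(\lambda)}$ varies with $n$, and you have not shown that Bruhat-lower terms in $\mathcal{K}_{\mu*0^n*\rev(\lambda)}$ regroup, after symmetrizing in the trailing block and taking $n\to\infty$, into pairs strictly below $(\mu|\lambda)$ in any single order on $\Sigma$. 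There is also a technical mismatch: the expansion naturally lands in the span of $x^{\nu}m_{\kappa}[\mathfrak{X}_{\ell(\mu)}]$ with $\nu$ possibly ending in zeros, and your passage to the family $\{x^{\nu}m_{\kappa}[\mathfrak{X}_{\ell(\nu)}]:(\nu|\kappa)\in\Sigma\}$ with $\nu$ reduced presumes a further unitriangular change of basis you do not verify.

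The paper avoids all of this. It argues directly: a relation $\sum_i a^{(i)} s_{(\mu^{(i)}|\lambda^{(i)})}=0$ becomes, via Proposition~\ref{almost sym schur from key}, the relation $\sum_i a^{(i)}\mathcal{K}_{\mu^{(i)}*0^{n-r^{(i)}}*\rev(\lambda^{(i)})}=0$ for all sufficiently large $n$; since distinct pairs $(\mu^{(i)}|\lambda^{(i)})$ yield distinct compositions once $n$ is large, linear independence of key polynomials in $\sP_n^{+}$ forces each $a^{(i)}=0$. No order on $\Sigma$ is required---only the known basis property of key polynomials in finitely many variables. Your route, if completed, would yield the stronger $\mathbb{Z}$-basis statement, but as written it does not establish the corollary.
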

\begin{proof}
    Since the key polynomials are homogeneous and the operators $W_k$ are clearly homogeneous, we see that the $s_{(\mu|\lambda)}$ are homogeneous as well. As there are sufficiently many $s_{(\mu|\lambda)}$ in each homogeneous component of $\sP(k)^{+}$, it suffices to show that the $s_{(\mu|\lambda)}$ are linearly independent (over $\mathbb{Q}$). Let $(\mu^{(1)}|\lambda^{(1)}),\ldots, (\mu^{(m)}|\lambda^{(m)}) \in \Sigma$ be distinct. Set $r^{(i)}:= \ell(\mu^{(i)})+\ell(\lambda^{(i)}).$ Suppose that for some $a^{(i)} \in \mathbb{Q}$, $\sum_{i=1}^{m} a^{(i)}s_{(\mu^{(i)}|\lambda^{(i)})} =0.$ Then 
    \begin{align*}
        0 &= \sum_{i=1}^{m} a^{(i)}s_{(\mu^{(i)}|\lambda^{(i)})} \\
        &= \sum_{i=1}^{m} a^{(i)} \lim_{n} \mathcal{K}_{\mu^{(i)}*0^{n-r^{(i)}}*\rev(\lambda^{(i)})} \\
        &=  \lim_{n} \sum_{i=1}^{m} a^{(i)} \mathcal{K}_{\mu^{(i)}*0^{n-r^{(i)}}*\rev(\lambda^{(i)})}.\\
    \end{align*}
    Now we see that for all sufficiently large $n$, 
    $$\sum_{i=1}^{m} a^{(i)} \mathcal{K}_{\mu^{(i)}*0^{n-r^{(i)}}*\rev(\lambda^{(i)})} = 0$$
    but, since the pairs $(\mu^{(i)}|\lambda^{(i)})$ are distinct, we know that the key polynomials $\mathcal{K}_{\mu^{(i)}*0^{n-r^{(i)}}*\rev(\lambda^{(i)})}$ are linearly independent. Therefore, $a^{(i)} = 0$ as desired.
\end{proof}

\begin{remark}
    It is an interesting question whether or not the $s_{(\mu|\lambda)}$, which as we will show in Theorem \ref{combinatorial formula for almost sym schur} have integral coefficients in the monomial basis, are a basis over $\mathbb{Z}$ for the space $\sP_{as, \mathbb{Z}}^{+}$ of almost symmetric functions over $\mathbb{Z}.$ This is non-trivial and does not follow from Corollary \ref{almost sym schur are basis}. However, this seems likely as there should be a simple monomial ordering with respect to which the $s_{(\mu|\lambda)}$ are uni-triangular. This ordering would need to specialize to both the Bruhat ordering on finite variable monomials and the dominance ordering on partitions in both the fully non-symmetric and symmetric extremes.
\end{remark}

In order to describe a combinatorial model for the almost symmetric Schur functions we require the next definition.

\begin{defn}
    Let $(\mu|\lambda) \in \Sigma.$ Let $\omega$ denote the first infinite ordinal i.e. $n < \omega$ for all $n \in \{1,2,\ldots\}.$ For a labelling $\sigma : dg'(\mu*\rev(\lambda)) \rightarrow \{1,2,\ldots\}$ denote by $\sigma^{\star}$ the labelling of $\widehat{dg}(\mu* \rev(\lambda))$ given by 
    \begin{itemize}
        \item $\sigma^{\star}(u) = \sigma(u)$ if $u \in dg'(\mu*\rev(\lambda))$
        \item $\sigma^{\star}(j,0) = j $ for $1 \leq j \leq \ell(\mu)$
        \item $\sigma^{\star}(j,0) = \omega + j - \ell(\mu) -1 $ for $\ell(\mu) +1 \leq j \leq \ell(\mu) + \ell(\lambda).$
    \end{itemize}
    We naturally extend the definitions in Definition \ref{HHL defn} of non-attacking, $\coinv$, and $\Des$ to labellings of the form $\sigma^{\star}$ which take values in $\{1,2,\ldots\} \cup \{\omega +1,\omega+2,\ldots \}.$ Define $\mathcal{L}(\mu|\lambda)$ to be the set of labellings $\sigma : dg'(\mu*\rev(\lambda)) \rightarrow \{1,2,\ldots\}$ such that $\sigma^{\star}$ is non-attacking, $\coinv(\sigma^{\star}) = 0$, and $\Des(\sigma^{\star}) = \emptyset.$
\end{defn}

\begin{example}
We will consider in this example two labellings of the type defined above for the pair $(2|3,1)$.
Our diagrams in this case are given as follows:
\begin{center}
$dg'(2,1,3)$ = 
\ytableausetup{centertableaux, boxframe= normal, boxsize= 2em}
\begin{ytableau}
 \none &   \none    &    \\
       &   \none    &    \\
       &            &    \\
\end{ytableau}

\end{center}

\begin{center}
$\widehat{dg}(2,1,3)$ =
\ytableausetup{centertableaux, boxframe= normal, boxsize= 2em}
\begin{ytableau}
 \none &   \none    &    \\
       &   \none    &    \\
       &            &    \\
       &            &    \\
\end{ytableau}

\end{center}

Consider the labellings $\sigma_1,\sigma_2:dg'(2,1,3) \rightarrow \{1,2,3,4\}$ and there corresponding labellings $\sigma_1^{\star},\sigma_2^{\star}:\widehat{dg}(2,1,3) \rightarrow \{1,2,3,4\}$ given by

\begin{center}
$\sigma_1$ =
\ytableausetup{centertableaux, boxframe= normal, boxsize= 2em}
\begin{ytableau}
 \none &   \none    &  1  \\
   1    &   \none    &  2  \\
   1    &      3      &  4  \\
\end{ytableau}
~~$\rightarrow$ ~~$\sigma_1^{\star} =$
\ytableausetup{centertableaux, boxframe= normal, boxsize= 2.4em}
\begin{ytableau}
 \none &   \none    &  1  \\
   1    &   \none    &  2  \\
   1    &      3      &  4  \\
   1    &     \omega       &  \omega + 1  \\
\end{ytableau}

\end{center}

\begin{center}
$\sigma_2$ =
\ytableausetup{centertableaux, boxframe= normal, boxsize= 2em}
\begin{ytableau}
 \none &   \none    &  1  \\
   1    &   \none    &  3  \\
   1    &      2      &  4  \\
\end{ytableau}
~~ $\rightarrow$ ~~ $\sigma_2^{\star} =$
\ytableausetup{centertableaux, boxframe= normal, boxsize= 2.4em}
\begin{ytableau}
 \none &   \none    &  1  \\
   1    &   \none    &  3  \\
   1    &      2      &  4  \\
   1    &     \omega       &  \omega + 1  \\
\end{ytableau}

\end{center}

Both $\sigma_1,\sigma_2$ are non-attacking with $\maj(\sigma_1^{\star}) = \maj(\sigma_2^{\star}) = 0.$ However, $\coinv(\sigma_1^{\star}) = 0$ whereas $\coinv(\sigma_2^{\star}) \neq 0.$ To see this note that in the labelling $\sigma_2,$
the boxes 
\begin{center}
\begin{ytableau}
 \none &   \none    &  1  \\
       &   \none    &  3  \\
       &      2      &    \\
       &            &    \\
\end{ytableau}
\end{center}

form a co-inversion triple of Type 2.
\end{example}

We may now give a combinatorial formula for the almost symmetric Schur functions. This formula is derived directly from the HHL-type formula for the key polynomials.

\begin{thm}\label{combinatorial formula for almost sym schur}

    For $(\mu|\lambda)\in \Sigma$
    $$s_{(\mu|\lambda)} = \sum_{\sigma \in \mathcal{L}(\mu|\lambda)} x^{\sigma}.$$
\end{thm}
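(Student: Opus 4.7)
My plan is to derive the formula by combining Proposition \ref{almost sym schur from key} with the combinatorial formula for key polynomials that was established immediately before. Writing out both gives
$$s_{(\mu|\lambda)} = \lim_n \mathcal{K}_{\mu*0^n*\rev(\lambda)} = \lim_n \sum_{\tau \in \mathcal{L}(\mu*0^n*\rev(\lambda))} x^\tau,$$
so it suffices to identify the right-hand side with $\sum_{\sigma \in \mathcal{L}(\mu|\lambda)} x^\sigma$ via a filling-by-filling bijection for $n$ large and then stabilize coefficients.

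Fix an integer $M \geq 1$. For every $n > M$ I would produce a bijection between those $\tau \in \mathcal{L}(\mu*0^n*\rev(\lambda))$ whose labels lie in $\{1,\ldots,M\}$ and those $\sigma \in \mathcal{L}(\mu|\lambda)$ whose labels lie in $\{1,\ldots,M\}$, defined simply by reading off the same label in each box of $dg'(\mu)\sqcup dg'(\rev(\lambda))$ (the middle $n$ columns of $\mu*0^n*\rev(\lambda)$ are empty and contribute no filled boxes). The only differences between the augmented diagrams $\widehat{\tau}$ and $\sigma^\star$ are the bottom-row labels under the $\rev(\lambda)$-columns, namely $\ell(\mu)+n+k$ versus $\omega+k-1$ for $k=1,\ldots,\ell(\lambda)$. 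Both families are larger than every filling label (using $M < \ell(\mu)+n+1$ on one side and $M < \omega$ on the other) and appear in the same increasing order, so they are interchangeable for the order-dependent conditions defining $\mathcal{L}$.

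Next I would verify the conditions match. Non-attacking and no-descent among filled boxes transfer verbatim; attacks between middle-column bottom boxes and filled boxes are automatically satisfied since the middle-column labels $\ell(\mu)+1,\ldots,\ell(\mu)+n$ all exceed $M$. For the $\coinv$ condition, a short case analysis shows that no valid co-inversion triple of $\widehat{\tau}$ uses a middle-column box: a Type 1 triple with $v$ in such a column would force $w < u$ for $u,w$ in a $\mu$-part column of index $i = w$, contradicting $u \leq i$; a Type 2 triple would require a filled box above the bottom row of an empty column, which does not exist. The remaining triples are supported on $dg'(\mu) \cup dg'(\rev(\lambda))$, and their Type-1/Type-2 classification uses only column heights and left-right order (both unchanged) while the cyclic label condition depends only on the relative order of labels, which is preserved.

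Given the bijection, any monomial $x^\gamma$ supported in $\{1,\ldots,M\}$ has its coefficient in $\mathcal{K}_{\mu*0^n*\rev(\lambda)}$ equal to its coefficient in $\sum_{\sigma \in \mathcal{L}(\mu|\lambda)} x^\sigma$ for all $n > M$, since any $\tau$ producing $x^\gamma$ must take values in $\{1,\ldots,M\}$. This coefficient-wise stabilization, combined with Proposition \ref{almost sym schur from key} and the uniqueness of Ion-Wu limits, yields the formula. I expect the main obstacle to be the case analysis ruling out co-inversion triples that stray into the middle empty columns, together with the careful bookkeeping that the cyclic inequality on triple labels depends only on relative order, and is therefore insensitive both to the concrete large values of bottom-row labels in $\widehat{\tau}$ and to the ordinal labels $\omega+j$ in $\sigma^\star$.
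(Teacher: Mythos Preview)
Your proposal is correct and follows essentially the same route as the paper: start from Proposition~\ref{almost sym schur from key} and the HHL-type formula for key polynomials, then match fillings of $\mu*0^n*\rev(\lambda)$ with fillings in $\mathcal{L}(\mu|\lambda)$ by observing that the $\rev(\lambda)$ bottom-row labels can be replaced by any sufficiently large increasing sequence without affecting the order-theoretic conditions. The paper organizes this via a directed union $\bigcup_n \mathcal{L}(\mu*0^n*\rev(\lambda))$ and a single bijection to $\mathcal{L}(\mu|\lambda)$, while you fix a label bound $M$ and biject for each $n>M$; these are the same argument.

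One small correction: your claim that ``the middle-column labels $\ell(\mu)+1,\ldots,\ell(\mu)+n$ all exceed $M$'' does not follow from $n>M$ alone (take $\ell(\mu)=1$, $M=3$, $n=4$). Fortunately the non-attacking condition between middle-column bottom boxes and filled boxes is automatic for a different reason: the only filled boxes attacking a middle box $(k,0)$ are row-$1$ boxes in $\mu$-columns $j<k$, and the no-descent condition forces $\widehat{\tau}((j,1))\leq j\leq \ell(\mu)<k$. The same inequality (rather than the bound by $M$) is what actually drives your Type~1 co-inversion case, as you implicitly use when writing ``contradicting $u\leq i$''. With this fix your argument goes through unchanged.
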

\begin{proof}
    We start by noticing that from Proposition \ref{almost sym schur from key} we have 
    \begin{align*}
        &s_{(\mu|\lambda)} \\
        &= \lim_{n} \mathcal{K}_{\mu*0^n*\rev(\lambda)}\\
        &= \lim_{n} \sum_{\sigma \in \mathcal{L}(\mu*0^n*\rev(\lambda))} x^{\sigma}.\\
    \end{align*}

    For all $n \geq 0$ there is an injection $ \mathcal{L}(\mu*0^n*\rev(\lambda)) \rightarrow \mathcal{L}(\mu*0^{n+1}*\rev(\lambda))$ obtained as follows. Let $\sigma \in \mathcal{L}(\mu*0^n*\rev(\lambda)).$ Consider $\sigma': dg'(\mu*0^{n+1}*\rev(\lambda)) \rightarrow [n+1+\ell(\mu)+\ell(\lambda)]$ given by 
    \begin{itemize}
        \item $\sigma'(u) = \sigma(u)$ if $u \in dg'(\mu)$
        \item $\sigma'(i,j) = \sigma(i,j-1)$ if $(i,j)$ lies in the $\rev(\lambda)$ component of $dg'(\mu*0^{n+1}*\rev(\lambda)).$
    \end{itemize}
    In other words, we are simply aligning the $\rev(\lambda)$ parts of each of the diagrams $dg'(\mu*0^{n+1}*\rev(\lambda))$ and $dg'(\mu*0^{n}*\rev(\lambda))$ and copying the corresponding values of $\sigma.$ It is easy to see that $\sigma' \in \mathcal{L}(\mu*0^{n+1}*\rev(\lambda))$ and that the map $\sigma \rightarrow \sigma'$ is injective. To see this note that the entries values of $\sigma'$ are weakly decrease upwards along columns so that $\maj(\widehat{\sigma'}) = 0$ and, since $\widehat{\sigma}$ has no co-inversion triples of Types 1 or 2, then neither does $\widehat{\sigma'}$ meaning that $\coinv(\widehat{\sigma'}) = 0.$ Now we may consider the directed union 
    $$L:= \bigcup_{n \geq 0} \mathcal{L}(\mu*0^{n}*\rev(\lambda))$$ where we identify the image of $\mathcal{L}(\mu*0^{n}*\rev(\lambda))$ in $\mathcal{L}(\mu*0^{n+1}*\rev(\lambda))$ for all $n \geq 0.$ Hence, we have 
    $$s_{(\mu|\lambda)} = \sum_{\sigma \in L} x^{\sigma}.$$ 
    
    Lastly, we show that there exists a simple bijection $L \rightarrow \mathcal{L}(\mu|\lambda)$ such that $x^{\sigma} = x^{f(\sigma)}$ for all $\sigma \in L.$ For $\sigma \in L$ say, $\sigma \in \mathcal{L}(\mu*0^{n}*\rev(\lambda))$, we may define $\sigma'': dg'(\mu*\rev(\lambda)) \rightarrow \{1, 2,\ldots\}$ by 
    \begin{itemize}
        \item $\sigma''(u) = \sigma(u)$ if $u \in dg'(\mu)$
        \item $\sigma''(i,j) = \sigma(i+n,j)$ for $(i,j)$ in the $\rev(\lambda)$ component of $dg'(\mu*\rev(\lambda)).$
    \end{itemize}

    Then $\sigma'' \in \mathcal{L}(\mu|\lambda)$ and the map $\sigma \rightarrow \sigma''$ is injective. We now show this map is also surjective. Let $\gamma \in \mathcal{L}(\mu|\lambda)$ and $N:= \max\{\max_{u \in dg'(\mu*\rev(\lambda))}{\sigma(u)}, \ell(\mu)+\ell(\lambda)\}.$ Define $\sigma: \mu*0^{N - \ell(\mu)-\ell(\lambda)}*\rev(\lambda) \rightarrow [N]$ similarly to before by copying the values of $\sigma$ for both the $\mu$ and $\rev(\lambda)$ components of $dg'(\mu*\rev(\lambda))$ onto the corresponding components of $dg'(\mu*0^{N - \ell(\mu)-\ell(\lambda)}*\rev(\lambda)).$ Since $N$ was chosen sufficiently large, $\sigma \in \mathcal{L}(\mu*0^{N - \ell(\mu)-\ell(\lambda)}*\rev(\lambda)).$ Now $\sigma'' =\gamma$ and $x^{\sigma''} = x^{\gamma}.$ Therefore, 
    $$s_{(\mu|\lambda)} = \sum_{\sigma \in L} x^{\sigma} = \sum_{\sigma \in \mathcal{L}(\mu|\lambda)} x^{\sigma}.$$
\end{proof}

\subsection{Almost Symmetric Kostka Coefficients}

Since $s_{(\mu|\lambda)} \in \sP(\ell(\mu))^{+}$ and the set $\{x^{\alpha}m_{\nu}[\mathfrak{X}_{\ell(\mu)}]~| (\alpha|\nu) \in \Sigma, \ell(\alpha) \leq \ell(\mu) \}$ is a basis for $\sP(k)^{+}$ we may consider the following definition.

\begin{defn}\label{almost symmetric Kostka coefficients defn}
    Define the \textbf{\textit{almost symmetric Kostka coefficients}} $K_{(\alpha|\nu)}^{(\mu|\lambda)}$ to be the coefficients of the almost symmetric Schur functions expanded into the monomial basis of $\sP(\ell(\mu))^{+}$, i.e.
    $$s_{(\mu|\lambda)} = \sum_{\substack{(\alpha|\nu) \\ \ell(\alpha) \leq \ell(\mu)}} K_{(\alpha|\nu)}^{(\mu|\lambda)} x^{\alpha}m_{\nu}[\mathfrak{X}_{\ell(\mu)}].$$ 
    If $\ell(\alpha) > \ell(\mu)$ we simply set $K_{(\alpha|\nu)}^{(\mu|\lambda)} = 0.$
\end{defn}

\begin{remark}
    It is straightforward to check that  
    $$K_{(\mu|\nu)}^{(\emptyset|\lambda)} = \delta_{\mu,\emptyset} K_{\lambda, \nu}$$
    meaning that the $K_{(\alpha|\nu)}^{(\mu|\lambda)}$ generalize the classical Kostka coefficients $K_{\lambda, \nu}.$ On the other extreme, we find that 
    $$K_{(\alpha|\lambda)}^{(\mu|\emptyset)} = 0$$
    unless $\lambda = \emptyset$ in which case $K_{(\alpha|\emptyset)}^{(\mu|\emptyset)}$ is the multiplicity of the weight $\alpha$ in the Demazure character corresponding to $\mu.$ In either case, we see that the Kostka coefficients are non-negative.
\end{remark}

Using the combinatorial formula we found for the $s_{(\mu|\lambda)}$ (Theorem \ref{combinatorial formula for almost sym schur}) we are able to give a simple proof that the almost symmetric Kostka coefficients are non-negative integers.

\begin{thm}[Positivity for almost symmetric Kostka coefficients]\label{Positivity for almost symmetric Kostka coefficients}
    $$K_{(\alpha|\nu)}^{(\mu|\lambda)} \in \mathbb{Z}_{\geq 0}$$
\end{thm}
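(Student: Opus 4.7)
The plan is to read off $K^{(\mu|\lambda)}_{(\alpha|\nu)}$ directly from the combinatorial formula in Theorem \ref{combinatorial formula for almost sym schur} by picking a distinguished ``witness'' monomial that isolates the basis element $x^{\alpha}m_{\nu}[\mathfrak{X}_{\ell(\mu)}]$ from every other one.

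First I would observe that the recursive definition of $s_{(\mu|\lambda)}$, together with the fact that each operator $W_k$ produces functions symmetric in $x_{k+1},x_{k+2},\ldots$, places $s_{(\mu|\lambda)}$ in $\sP(\ell(\mu))^{+}$, so the expansion in Definition \ref{almost symmetric Kostka coefficients defn} is well defined. Setting $k := \ell(\mu)$, the next step is to fix, for each pair $(\alpha|\nu) \in \Sigma$ with $\ell(\alpha) \leq k$, the witness monomial
\[
x^{(\alpha,\nu)} := x^{\alpha} \cdot x_{k+1}^{\nu_1} x_{k+2}^{\nu_2} \cdots x_{k+\ell(\nu)}^{\nu_{\ell(\nu)}},
\]
and to verify the separation statement: $x^{(\alpha,\nu)}$ occurs with coefficient $1$ in $x^{\alpha}m_{\nu}[\mathfrak{X}_{k}]$ and with coefficient $0$ in $x^{\alpha'}m_{\nu'}[\mathfrak{X}_{k}]$ whenever $(\alpha',\nu') \neq (\alpha,\nu)$. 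This is routine: restricting the exponents of any monomial appearing in such a basis element to indices $1,\ldots,k$ recovers the reduced composition label, while sorting the nonzero exponents in indices $>k$ recovers the partition.

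From this separation property, $K^{(\mu|\lambda)}_{(\alpha|\nu)}$ equals the coefficient of $x^{(\alpha,\nu)}$ in $s_{(\mu|\lambda)}$. Applying Theorem \ref{combinatorial formula for almost sym schur}, this coefficient is exactly
\[
\#\{\sigma \in \mathcal{L}(\mu|\lambda) : x^{\sigma} = x^{(\alpha,\nu)}\},
\]
which is a non-negative integer. The proof is essentially bookkeeping, so there is no genuine obstacle; the only subtle point is ensuring that the witness monomial really distinguishes $(\alpha,\nu)$ within the full monomial basis of $\sP(k)^{+}$, which is guaranteed by the reducedness convention on $\alpha$ together with the fact that $m_{\nu}$ depends only on the partition $\nu$.
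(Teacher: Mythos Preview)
Your proof is correct and is essentially the same as the paper's: both extract $K^{(\mu|\lambda)}_{(\alpha|\nu)}$ from Theorem~\ref{combinatorial formula for almost sym schur} as the number of $\sigma\in\mathcal{L}(\mu|\lambda)$ with content $(\alpha|\nu)$, you via a witness monomial and the paper by ``grouping terms by symmetry'' to define the set $L_{(\alpha|\nu)}(\mu|\lambda)$. The only cosmetic difference is that the paper names this set explicitly (and reuses it later in Corollary~\ref{formula for monomial expansion in terms of schur expansion}), whereas you leave it as $\{\sigma\in\mathcal{L}(\mu|\lambda): x^{\sigma}=x^{(\alpha,\nu)}\}$.
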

\begin{proof}
    Let $(\mu|\lambda) \in \Sigma.$ Using the explicit combinatorial formula in Theorem \ref{combinatorial formula for almost sym schur} we see that 
    $$s_{(\mu|\lambda)} = \sum_{\sigma \in \mathcal{L}(\mu|\lambda)} x^{\sigma}.$$ However, we know $s_{(\mu|\lambda)}$ is symmetric in the variables $x_{\ell(\mu)+1}, x_{\ell(\mu)+2},\ldots$ so we may group terms by symmetry to find 
    $$\sum_{\sigma \in \mathcal{L}(\mu|\lambda)} x^{\sigma} = \sum_{\nu \in \Par} m_{\nu}[\mathfrak{X}_{\ell(\mu)}] \sum_{\sigma \in L_{\nu}(\mu|\lambda)} x_1^{|\sigma^{-1}(1)|}\cdots x_{\ell(\mu)}^{|\sigma^{-1}(\ell(\mu))|} $$
    where $L_{\nu}(\mu|\lambda)$ is the set of labellings $\sigma: \mu*\rev(\lambda) \rightarrow [\mu+\ell(\nu)]$ such that $\sigma \in \mathcal{L}(\mu|\lambda)$ and for all $1 \leq i \leq \ell(\nu)$, $|\sigma^{-1}(\ell(\mu)+i)| = \nu_i.$ Notice that $|L_{\nu}(\mu|\lambda)| < \infty$ for all $\nu.$ 
    
    We may further subdivide the sets $L_{\nu}(\mu|\lambda)$ now to account for the value of $x_1^{|\sigma^{-1}(1)|}\cdots x_{\ell(\mu)}^{|\sigma^{-1}(\ell(\mu))|}.$ For $\ell(\alpha) \leq \ell(\mu)$ let $L_{(\alpha|\nu)}(\mu|\lambda)$ denote the set of all $\sigma \in L_{\nu}(\mu|\lambda)$ such that $|\sigma^{-1}(i)| = (\alpha*0^{\ell(\mu)-\ell(\alpha)})_i.$ Then 
    $$s_{(\mu|\lambda)} = \sum_{(\alpha|\nu)} |L_{(\alpha|\nu)}(\mu|\lambda)| x^{\alpha} m_{\nu}[\mathfrak{X}_{\ell(\mu)}].$$

    Thus 
    $$K_{(\alpha|\nu)}^{(\mu|\lambda)} = |L_{(\alpha|\nu)}(\mu|\lambda)| 
 \in \mathbb{Z}_{\geq 0}.$$
\end{proof}

\begin{remark}
    Note that $K_{(\alpha|\nu)}^{(\mu|\lambda)} = |L_{(\alpha|\nu)}(\mu|\lambda)|$ gives a combinatorial formula for the almost symmetric Kostka coefficients. This formula generalizes the well known formula $K_{\lambda,\mu} = |\SSYT(\lambda,\mu)|$ where $\SSYT(\lambda,\mu)$ is the set of semistandard Young tableaux with shape $\lambda$ and content $\mu$.
\end{remark}

\begin{example}
    We saw before that 
    $$s_{(2|3,1)} = x_1^{3}s_{(2,1)}[\mathfrak{X}_1] + x_1^2 s_{(3,1)}[\mathfrak{X}_1]$$
    which we can expand as 
    $$s_{(2|3,1)} = x_1^3 m_{(2,1)}[\mathfrak{X}_1] + 2x_1^3m_{(1,1,1)}[\mathfrak{X}_1] + x_1^2m_{(3,1)}[\mathfrak{X}_1] + x_1^2m_{(2,2)}[\mathfrak{X}_1] + 2x_1^2m_{(2,1,1)}[\mathfrak{X}_1] + 3x_1^2m_{(1,1,1,1)}[\mathfrak{X}_1].$$
    This gives that, for example, 
    $K_{(2|1,1,1,1)}^{(2|3,1)} = 3$
    which corresponds to the 3 diagrams: 
    \begin{center}
        \begin{ytableau}
 \none &   \none    &  2  \\
   1    &   \none    &  3  \\
   1    &      4      &  5  \\
   1    &     \omega       &  \omega + 1  \\
\end{ytableau}
~
 \begin{ytableau}
 \none &   \none    &  2  \\
   1    &   \none    &  4  \\
   1    &      3      &  5  \\
   1    &     \omega       &  \omega + 1  \\
\end{ytableau} 
~
 \begin{ytableau}
 \none &   \none    &  3  \\
   1    &   \none    &  4  \\
   1    &      2      &  5  \\
   1    &     \omega       &  \omega + 1  \\
\end{ytableau}
    \end{center}

    Note that the above fillings in the $\rev(\lambda) = (1,3)$ component are exactly, up to shifting indices, the semistandard Young tableuax of shape $(3,1)$ with content $(1,1,1,1)$ (and hence standard). This reflects that in the monomial-Schur expansion of $s_{(2|3,1)}$ there is one copy of $x_1^2s_{(3,1)}[\mathfrak{X}_1]$ and $K_{(3,1),(1,1,1,1)} = 3.$

    We also have that $K^{(2|3,1)}_{(3|1,1,1,1)} = 2$ which may be seen by computing the labellings in $L_{(3|1,1,1)}(2|3,1)$ directly:

\begin{center}
        \begin{ytableau}
 \none &   \none    &  2  \\
   1    &   \none    &  3  \\
   1    &      1      &  4  \\
   1    &     \omega       &  \omega + 1  \\
\end{ytableau}
~
 \begin{ytableau}
 \none &   \none    &  1  \\
   1    &   \none    &  2  \\
   1    &      3      &  4  \\
   1    &     \omega       &  \omega + 1  \\
\end{ytableau} 

    \end{center}

From the computation of $K_{(2|1,1,1,1)}^{(2|3,1)} $ one might be tempted to guess that there is always a way to compute the almost symmetric Kostka numbers by classical Kostka numbers in some obvious manner. However, the example of $K^{(2|3,1)}_{(3|1,1,1,1)}$ shows that it is not always so simple. It particular, the filling 
\begin{center}
        \begin{ytableau}
 \none &   \none    &  1  \\
   1    &   \none    &  3  \\
   1    &      2      &  4  \\
   1    &     \omega       &  \omega + 1  \\
\end{ytableau}
    \end{center}
    has a reverse standard filling of $\rev(\lambda)$ but is not in $L_{(3|1,1,1)}(2|3,1)$ since $\coinv \neq 0.$
\end{example}

\section{Parabolic Demazure Character Formula}

In this section we are going to show that the monomial-Schur expansion of $s_{(\mu|\lambda)}$ has non-negative coefficients using the Demazure character formula by relating $s_{(\mu|\lambda)}$ to the representation theory of parabolic subgroups of type $\GL.$ This is a strictly stronger result than Theorem \ref{Positivity for almost symmetric Kostka coefficients} and we do not find a simple combinatorial formula for the coefficients of this expansion. The main result of this section, Theorem \ref{rep theory for schur expansion}, may be viewed as a Demazure formula for parabolic subgroups of $\GL$ although it will follow almost directly from the usual Demazure formula. 

\begin{defn}
   Define the scalars $M_{(\alpha|\nu)}^{(\mu|\lambda)}$ to be the coefficients of the expansion of the almost symmetric Schur functions into the monomial-Schur basis of $\sP(\ell(\mu))^{+}$, i.e.
    $$s_{(\mu|\lambda)} = \sum_{\substack{(\alpha|\nu) \\ \ell(\alpha) \leq \ell(\mu)}} M_{(\alpha|\nu)}^{(\mu|\lambda)} x^{\alpha}s_{\nu}[\mathfrak{X}_{\ell(\mu)}].$$
    If $\ell(\alpha) > \ell(\mu)$ we simply set $M_{(\alpha|\nu)}^{(\mu|\lambda)} = 0.$
\end{defn}

We wish to show that $M_{(\alpha|\nu)}^{(\mu|\lambda)} \in \mathbb{Z}_{\geq 0}$ but in order to do so we must first review some representation theory in type $\GL.$

\begin{defn}
    Let $n \geq 1.$ Define $\bor_n$ to be the Borel subgroup of upper-triangular matrices in $\GL_n$ and let $\tor_n$ denote the group of diagonal matrices in $\GL_n.$ For $0 \leq k \leq n$ denote by $\para_n(k)$ the group of $M \in \GL_n$ such that $M_{ij}=0$ if either $1 \leq j < i \leq k$ or $j \leq k \leq i-1.$ Lastly, let $\levi_{n}(k) = \tor_k \times \GL_{n-k} \subset \GL_n$ under the block diagonal embedding $\GL_k \times \GL_{n-k} \rightarrow \GL_n.$ Let $\mathfrak{b}_n$ denote the Lie algebra of $\bor_n$ i.e. the set of upper triangular $n\times n$ matrices over $\mathbb{C}$ with the usual commutator. Let $\mathcal{U}(\mathfrak{b}_n)$ denote the \textbf{\textit{universal enveloping algebra}} of $\mathfrak{b}_n$. For a dominant integral weight $\lambda \in \mathbb{Z}_{\geq 0}^n$ let $\mathcal{V}^{\lambda}$ denote the corresponding highest weight representation of $\GL_n.$
\end{defn}

\begin{defn}
    Given a finite dimensional polynomial representation $V$ of $\tor_n$ we will denote by $\Char(V) \in \mathbb{Z}[x_1,\ldots,x_n]$ the \textbf{\textit{ formal character}} of $V$ as 
    $$\Char(V) = \sum_{\alpha \in \mathbb{Z}_{\geq 0}^{n}} \Dim \Hom_{\tor_n}(\alpha,V) x^{\alpha}.$$
\end{defn}

\begin{defn}\cite{Dem_1974}
    Given a dominant integral weight $\lambda \in \mathbb{Z}_{\geq 0}^{n}$ and $\sigma \in \mathfrak{S}_n$ define the \textbf{\textit{Demazure module}} $\mathcal{V}^{\lambda}_{\sigma(\lambda)}$ to be the $\bor_n$-module $$\mathcal{V}^{\lambda}_{\sigma}:= \mathcal{U}(\mathfrak{b}_n)v$$ where
    $v \in \mathcal{V}^{\lambda}_{\sigma}$ is any weight vector with weight $\sigma(\lambda).$ 
\end{defn}

\begin{remark}
    Notice that the Demazure module $\mathcal{V}^{\lambda}_{\sigma}$ is only well defined up to the vector $\sigma(\lambda).$ Therefore, we may instead index these modules as 
$$\mathcal{V}^{\lambda}_{\sigma(\lambda)}:= \mathcal{V}^{\lambda}_{\sigma}.$$
\end{remark}

\begin{thm}(Demazure Character Formula)\cite{Andersen1985}
    Given a dominant integral weight $\lambda$ and $\sigma \in \mathfrak{S}_n$
    $$\Char(\mathcal{V}^{\lambda}_{\sigma(\lambda)}) = \mathcal{K}_{\sigma(\lambda)}.$$
\end{thm}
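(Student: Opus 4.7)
The plan is to prove the formula by induction on $\ell(\sigma)$, where I may take $\sigma$ of minimal length in its coset $\sigma\,\Stab_{\mathfrak{S}_n}(\lambda)$ without loss of generality, since as noted in the paper's remark $\mathcal{V}^\lambda_{\sigma(\lambda)}$ depends only on the weight $\sigma(\lambda)$. For the base case $\sigma = e$, any weight vector of weight $\lambda$ is annihilated by every strictly upper-triangular root vector in $\mathfrak{b}_n$ (since $\lambda$ is dominant and thus highest), so $\mathcal{V}^\lambda_\lambda = \mathbb{C}v$ is one-dimensional with character $x^\lambda = \mathcal{K}_\lambda$.

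For the inductive step I would write $\sigma = s_i \tau$ with $\ell(\sigma) = \ell(\tau) + 1$ and $\tau$ of minimal length, and set $\alpha = \tau(\lambda)$. The length-additivity together with minimality forces $\alpha_i > \alpha_{i+1}$, so that $s_i \alpha = \sigma(\lambda)$. Consider the minimal parabolic Lie subalgebra $\mathfrak{p}_i := \mathfrak{b}_n + \mathbb{C}\,E_{i+1,i}$, which contains a copy of $\mathfrak{sl}_2^{(i)}$ attached to the simple root $\alpha_i$. The crux of the argument is the module identity $\mathcal{V}^\lambda_{\sigma(\lambda)} = \mathcal{U}(\mathfrak{p}_i)\cdot \mathcal{V}^\lambda_{\tau(\lambda)}$. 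The inclusion $\supseteq$ is easy: applying $E_{i+1,i}$ to a weight-$\alpha$ vector produces a nonzero vector of weight $s_i \alpha$ by $\mathfrak{sl}_2$-theory (using $\alpha_i > \alpha_{i+1}$), and then $\mathcal{U}(\mathfrak{b}_n)$ generates the Demazure module at $\sigma(\lambda)$.

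Granting the module identity and the accompanying $E_{i,i+1}$-stability of $\mathcal{V}^\lambda_{\tau(\lambda)}$, I would decompose $\mathcal{V}^\lambda_{\tau(\lambda)}$ into $\mathfrak{sl}_2^{(i)}$-strings; each such string has its $\alpha_i$-highest vector inside $\mathcal{V}^\lambda_{\tau(\lambda)}$, and passing to $\mathcal{U}(\mathfrak{p}_i)\cdot \mathcal{V}^\lambda_{\tau(\lambda)}$ extends each string downward in $\alpha_i$-weight to its full $\mathfrak{sl}_2^{(i)}$-span. The direct computation
$$\xi_i(x^\beta) = x^\beta + x^{\beta-\alpha_i} + \cdots + x^{s_i \beta} \quad \text{whenever } \beta_i \geq \beta_{i+1}$$
shows that $\xi_i$ sends the character $x^\beta$ of a string's highest weight to the character of the full string. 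Summing over strings therefore yields $\Char(\mathcal{V}^\lambda_{\sigma(\lambda)}) = \xi_i \Char(\mathcal{V}^\lambda_{\tau(\lambda)})$, and by the inductive hypothesis combined with the defining recursion of the key polynomial this equals $\xi_i \mathcal{K}_{\tau(\lambda)} = \mathcal{K}_{s_i \tau(\lambda)} = \mathcal{K}_{\sigma(\lambda)}$, completing the induction.

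The main obstacle is exactly the pair of structural claims used above: the module identity $\mathcal{V}^\lambda_{\sigma(\lambda)} = \mathcal{U}(\mathfrak{p}_i)\cdot\mathcal{V}^\lambda_{\tau(\lambda)}$ and the $E_{i,i+1}$-stability of $\mathcal{V}^\lambda_{\tau(\lambda)}$ needed to ensure that the $\mathfrak{sl}_2^{(i)}$-highest weight vectors in the ambient decomposition all lie inside $\mathcal{V}^\lambda_{\tau(\lambda)}$. This is the point at which Demazure's original argument had a gap, and where Andersen supplied a correct proof using Frobenius splittings in characteristic $p$ applied to Schubert varieties in $\GL_n / \bor_n$; alternative routes pass through Kashiwara's and Joseph's work on quantized enveloping algebras and global crystal bases, or through Kempf's cohomology-vanishing theorem combined with the Bott--Borel--Weil description of $\Ind_{\bor_n}^{\para_i}$. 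Any of these supplies the required stability, after which the remainder of the argument reduces to the elementary $\mathfrak{sl}_2$-string calculation described above.
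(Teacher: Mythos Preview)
The paper does not prove this theorem: it is quoted as a result from the literature with the citation \cite{Andersen1985}, followed only by a historical remark that Demazure conjectured the formula and that Andersen gave the first complete proof by realizing Demazure modules as spaces of sections of line bundles on Schubert varieties and establishing that those singularities are rational. There is therefore no in-paper argument to compare your proposal against.

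That said, your sketch is a faithful account of the standard inductive strategy, and you have correctly located the genuine difficulty: the module identity $\mathcal{V}^{\lambda}_{\sigma(\lambda)} = \mathcal{U}(\mathfrak{p}_i)\cdot \mathcal{V}^{\lambda}_{\tau(\lambda)}$ and the accompanying stability of $\mathcal{V}^{\lambda}_{\tau(\lambda)}$ under the $\mathfrak{sl}_2^{(i)}$-raising operator. Your acknowledgement that this is exactly where Demazure's original argument was incomplete, and that Andersen's geometric input (or the alternative routes via Kashiwara/Joseph or Kempf vanishing) is what closes the gap, matches the paper's own remark. The elementary $\xi_i$-string calculation you give for the character side is correct and is the easy half of the argument.
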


\begin{remark}
    The Demazure character formula was first conjectured by Demazure \cite{Dem_1974} but 
    the first complete proof was given by Andersen \cite{Andersen1985} by realizing the Demazure modules as spaces of sections of vector bundles of Schubert varieties and showing that the singularities of Schubert varieties are rational.
\end{remark}

\begin{defn}
    Let $(\mu|\lambda) \in \Sigma.$ For all $n \geq \ell(\mu)+\ell(\lambda)$ define 
    $$\mathcal{V}^{(n)}(\mu|\lambda):= \mathcal{V}^{\sort(\mu*\lambda)*0^{n-\ell(\sort(\mu*\lambda))}}_{\mu*0^{n-\ell(\mu)-\ell(\lambda)}*\rev(\lambda)}.$$ If $\alpha \in \Compred$ and $\ell(\alpha) \leq k$ we will write $\chi^{(n)}(\alpha|\lambda)$ for the irreducible $\levi_{n}(k) = \tor_{k} \times \GL_{n-k}$- module given by
    $$\chi^{(n)}(\alpha|\lambda):= (\alpha*0^{k-\ell(\alpha)}) \otimes \mathcal{V}^{\lambda*0^{n-k- \ell(\lambda)}}$$ where we are using the shorthand $\alpha*0^{k-\ell(\alpha)}$ to represent the corresponding $1$-dimensional representation of $\tor_{k}.$ 
\end{defn}

We may relate the almost symmetric Schur functions  $s_{(\mu|\lambda)}$ to Demazure characters via key polynomials directly from the following simple lemma.

\begin{lem}\label{rep theory of almost sym schur} 
Let $(\mu|\lambda) \in \Sigma$. Then
    $$s_{(\mu|\lambda)} = \lim_{n} \Char\mathcal{V}^{(n)}(\mu|\lambda).$$
\end{lem}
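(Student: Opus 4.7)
The plan is to reduce the statement directly to the Demazure character formula applied to each finite-level module, then pass to the limit using Proposition \ref{almost sym schur from key}. Given how $\mathcal{V}^{(n)}(\mu|\lambda)$ has been set up, the argument is essentially bookkeeping.

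First, I would check that the Demazure character formula applies cleanly. By definition,
$$\mathcal{V}^{(n)}(\mu|\lambda) = \mathcal{V}^{\sort(\mu*\lambda)*0^{n-\ell(\sort(\mu*\lambda))}}_{\mu * 0^{n - \ell(\mu) - \ell(\lambda)} * \rev(\lambda)}.$$
The superscript $\sort(\mu*\lambda)*0^{n-\ell(\sort(\mu*\lambda))}$ is by construction a partition padded on the right with zeros, hence a dominant integral weight for $\GL_n$. A short multiset count (separating $\mu$ into its nonzero entries and its internal zeros, and using $\ell(\sort(\mu*\lambda)) = \ell(\mu) - z(\mu) + \ell(\lambda)$ where $z(\mu)$ counts zeros in $\mu$) shows that the subscript $\mu * 0^{n-\ell(\mu)-\ell(\lambda)} * \rev(\lambda)$ has exactly the same multiset of entries as the superscript, so it lies in the $\mathfrak{S}_n$-orbit of the dominant weight and the Demazure module is well-defined.

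With the setup in hand, the Demazure character formula yields
$$\Char\bigl(\mathcal{V}^{(n)}(\mu|\lambda)\bigr) = \mathcal{K}_{\mu * 0^{n - \ell(\mu) - \ell(\lambda)} * \rev(\lambda)}.$$
Re-indexing with $m = n - \ell(\mu) - \ell(\lambda)$ and invoking Proposition \ref{almost sym schur from key} gives
$$\lim_{n} \Char\bigl(\mathcal{V}^{(n)}(\mu|\lambda)\bigr) = \lim_{m} \mathcal{K}_{\mu * 0^{m} * \rev(\lambda)} = s_{(\mu|\lambda)},$$
which is exactly the claim.

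There is no real obstacle here beyond tracking the padding of zeros between the $\mu$ and $\rev(\lambda)$ parts. The only mild subtlety is interpreting the left-hand limit in the Ion--Wu sense used by Proposition \ref{almost sym schur from key}, but since each finite character is a polynomial sitting naturally inside $\sP_{as}^{+}$ and agrees with the corresponding key polynomial on the nose, the two notions of convergence coincide.
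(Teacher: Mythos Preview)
Your proof is correct and follows essentially the same route as the paper: apply the Demazure character formula to identify $\Char\mathcal{V}^{(n)}(\mu|\lambda)$ with the key polynomial $\mathcal{K}_{\mu*0^{n-\ell(\mu)-\ell(\lambda)}*\rev(\lambda)}$, then invoke Proposition~\ref{almost sym schur from key} to pass to the limit. Your explicit verification that the subscript is a permutation of the dominant weight is a nice extra detail that the paper leaves implicit.
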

\begin{proof}
    In Proposition \ref{almost sym schur from key} we saw that 
    $$s_{(\mu|\lambda)}= \lim_{n} \mathcal{K}_{\mu*0^n*\rev(\lambda)} = \lim_{n} \mathcal{K}_{\mu*0^{n-\ell(\mu)-\ell(\lambda)}*\rev(\lambda)}.$$
    Using the Demazure character formula we see that 
    $$\mathcal{K}_{\mu*0^{n-\ell(\mu)-\ell(\lambda)}*\rev(\lambda)} = \Char\left( \mathcal{V}^{\sort(\mu*\lambda)*0^{n-\ell(\sort(\mu*\lambda))}}_{\mu*0^{n-\ell(\mu)-\ell(\lambda)}*\rev(\lambda)}\right)$$ so the result follows.
\end{proof}

\subsection{Positivity of Monomial-Schur Expansion Coefficients}

We require the following simple lemma.

\begin{lem}\label{levi submodule lemma}
    Suppose $\lambda$ is an integral dominant weight of $\GL_n$ and $ \alpha* \beta = \sigma(\lambda)$ for some $\sigma \in \mathfrak{S}_n$ with $\beta$ weakly decreasing. Then $\mathcal{V}^{\lambda}_{\alpha*\beta}$ is a $\para_n(\ell(\alpha))$ submodule of $\mathcal{V}^{\lambda}.$
\end{lem}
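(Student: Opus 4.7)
The plan is to reduce $\para_n(\ell(\alpha))$-stability to a character-level identity and then extract that identity from the monotonicity of $\beta$. Set $k := \ell(\alpha)$. The Lie algebra of $\para_n(k)$ is generated by $\mathfrak{b}_n$ together with the negative simple-root vectors $F_j := E_{j+1,j}$ for $j \in \{k+1,\ldots,n-1\}$, which are the lower-triangular generators of the $\GL_{n-k}$ block embedded in the bottom-right corner of $\GL_n$. Since $\mathcal{V}^\lambda_{\alpha*\beta} = \mathcal{U}(\mathfrak{b}_n) v$ is already $\mathfrak{b}_n$-stable by construction, proving the lemma reduces to verifying that $\mathcal{V}^\lambda_{\alpha*\beta}$ is closed under the action of each $F_j$ for $j \in \{k+1,\ldots,n-1\}$.

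For each such $j$ I would pass to the character side via the Demazure functor formalism: the smallest $\mathfrak{b}_n$-submodule of $\mathcal{V}^\lambda$ containing $\mathcal{V}^\lambda_{\alpha*\beta}$ and closed under $F_j$ is itself a Demazure module (namely $\mathcal{V}^\lambda_{\sigma'(\lambda)}$ for $\sigma'$ equal to either $\sigma$ or $s_j\sigma$ depending on length), and its character equals $\xi_j \mathcal{K}_{\alpha*\beta}$ by the Demazure character formula quoted in the paper. Hence $\mathcal{V}^\lambda_{\alpha*\beta}$ is $F_j$-stable if and only if $\xi_j \mathcal{K}_{\alpha*\beta} = \mathcal{K}_{\alpha*\beta}$, i.e.\ if the key polynomial $\mathcal{K}_{\alpha*\beta}$ is symmetric in $x_j, x_{j+1}$.

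The character symmetry is then a combinatorial consequence of the monotonicity of $\beta$. For each $j \in \{k+1,\ldots,n-1\}$ the two consecutive entries $(\alpha*\beta)_j$ and $(\alpha*\beta)_{j+1}$ lie entirely inside $\beta$, and the hypothesis on $\beta$ places them in the direction where the Demazure recursion $\mathcal{K}_{s_j \gamma} = \xi_j \mathcal{K}_\gamma$ (valid when $\gamma_j > \gamma_{j+1}$) combines with the idempotency $\xi_j^2 = \xi_j$ to yield $\xi_j \mathcal{K}_{\alpha*\beta} = \mathcal{K}_{\alpha*\beta}$. Consecutive equal entries of $\beta$ are handled by a short induction modeled on the remark in the paper that asserts symmetry of $\mathcal{K}_\gamma$ in any strictly monotone window of $\gamma$, reducing the weakly monotone case to the strictly monotone one by a single application of $\xi_j^2 = \xi_j$.

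The main obstacle is the module-level translation in the second paragraph. The character equality $\xi_j \mathcal{K} = \mathcal{K}$ is easy to verify, but upgrading it to an honest equality of $\bor_n$-submodules of $\mathcal{V}^\lambda$---and hence to $F_j$-closure of $\mathcal{V}^\lambda_{\alpha*\beta}$---requires Andersen's geometric realization of Demazure modules as spaces of sections of line bundles on Schubert varieties, together with the principle that inclusion among Demazure modules is detected by their characters. Once that translation is granted, the combinatorics of key polynomials in the third paragraph finishes the argument.
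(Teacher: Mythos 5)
Your route is genuinely different from the paper's. The paper works entirely inside $\mathcal{V}^{\lambda}$ with bare hands: it reduces to stability under all lowering operators $E_{ji}$ with $k+1 \leq i < j \leq n$, and proves this by induction along chains $v_{r+1} = E_{i_r j_r} v_r$ of raising operators applied to the extremal vector, disposing of the commutators $[E_{ji}, E_{i_m j_m}]$ by a short case analysis; nothing beyond the definition $\mathcal{V}^{\lambda}_{\alpha*\beta} = \mathcal{U}(\mathfrak{b}_n)v$ and extremality of $v$ is used. Your proposal instead follows the classical Demazure-functor route: reduce to the simple lowering operators $F_j$, and detect $F_j$-stability on characters via $\xi_j$-invariance of $\mathcal{K}_{\alpha*\beta}$. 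The reduction to the $F_j$ is fine and the overall skeleton is sound, but your second paragraph misattributes the crux: the equivalence ``$F_j$-stable iff $\xi_j\mathcal{K}_{\alpha*\beta} = \mathcal{K}_{\alpha*\beta}$'' does \emph{not} follow from the Demazure character formula quoted in the paper. That formula only computes characters of Demazure modules; the statement that the smallest $\mathfrak{b}_n$-submodule containing $\mathcal{V}^{\lambda}_{\alpha*\beta}$ and closed under $F_j$ is again a Demazure module with character $\xi_j\mathcal{K}_{\alpha*\beta}$ is precisely the hard module-level content (equivalently, the stability of a Schubert variety $X_w$ under the minimal parabolic when $s_jw < w$, or Joseph's Demazure-functor results). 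Your final paragraph concedes this, so as written the proposal is a correct outline modulo citing that standard-but-substantial input; once cited it closes, buying a one-line combinatorial finish at the cost of geometric machinery that the paper's half-page commutator computation avoids entirely.

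One point you gloss over deserves explicit care: the direction of the monotonicity. The identity $\xi_j\mathcal{K}_{\gamma} = \mathcal{K}_{\gamma}$, obtained as you describe from $\mathcal{K}_{\gamma} = \xi_j\mathcal{K}_{s_j\gamma}$ together with $\xi_j^2 = \xi_j$, holds precisely when $\gamma_j \leq \gamma_{j+1}$, i.e.\ when the window is weakly \emph{increasing}. If $\beta$ is taken literally weakly decreasing as in the statement, then at any strict descent $\gamma_j > \gamma_{j+1}$ inside the window one has $\xi_j\mathcal{K}_{\gamma} = \mathcal{K}_{s_j\gamma} \neq \mathcal{K}_{\gamma}$, and indeed the lemma itself fails: for $n=2$, $\alpha = \emptyset$, $\beta = (1,0)$, the module $\mathcal{V}^{(1,0)}_{(1,0)}$ is the highest-weight line of $\mathbb{C}^2$, which is not a $\para_2(0) = \GL_2$-submodule. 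The hypothesis actually used downstream, where the extremal weight is $\mu * 0^{m} * \rev(\lambda)$, is that the tail be weakly increasing (anti-dominant), and the paper's own base case ``$E_{ji}v_0 = 0$'' is likewise valid only in that direction; so ``weakly decreasing'' is a slip shared by the statement and the paper's proof. Your write-up inherits it when you assert that ``the hypothesis on $\beta$ places them in the direction where'' the recursion applies --- your $\xi_j$-computation is correct, but you should say explicitly that it is the weakly increasing case it handles.
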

\begin{proof}
    Let $k = \ell(\alpha).$
    Since $\para_n(k)$ is the semidirect product of $\bor_n$ and $\levi_n(k)$ we only need to show that $\mathcal{V}^{\lambda}_{\alpha*\beta}$ is preserved under the action of both $\bor_n$ and $\levi_n(k)$. Since $\mathcal{V}^{\lambda}_{\alpha*\beta}$ is by definition a $\bor_n$-module it suffices to show that $\mathcal{V}^{\lambda}_{\alpha*\beta}$ is preserved under the action of $Id_{k}\times \GL_{n-k}.$ 

    We will proceed by induction. To start fix $v_0 \in \mathcal{V}^{\lambda}_{\alpha*\beta}$ to be a nonzero vector with weight $\alpha*\beta.$ Then for all $k+1 \leq i< j \leq n$, since $\beta$ is weakly decreasing, $E_{ji}v = 0 \in \mathcal{V}^{\lambda}_{\alpha*\beta}.$ Suppose now that $v_0,v_1,\ldots , v_{m+1}$ is a sequence of weight vectors in $\mathcal{V}^{\lambda}_{\alpha*\beta}$ with $v_{r+1} = E_{i_rj_r}v_r$ for all $0 \leq r \leq m$ for some $1 \leq i_r < j_r \leq n$ and that 
    $$E_{ji}v_r \in  \mathcal{V}^{\lambda}_{\alpha*\beta}$$ for all $k+1 \leq i< j \leq n$ and $ 0 \leq r \leq m.$ Note that any weight vector in $\mathcal{V}^{\lambda}_{\alpha*\beta}$ may be obtained using such a chain. Now fix some $k+1 \leq i< j \leq n.$ We see that 
    \begin{align*}
        &E_{ji}v_{m+1}\\
        &= E_{ji}E_{i_mj_m}v_m\\
        &= \left(E_{i_mj_m}E_{ji} + [E_{ji},E_{i_mj_m}] \right) v_m\\
        &= E_{i_mj_m}\left( E_{ji} v_m\right) + [E_{ji},E_{i_mj_m}]v_m.\\
    \end{align*}
    By assumption $E_{ji}v_m \in \mathcal{V}^{\lambda}_{\alpha*\beta}$ so that, since $i_m < j_m$, 
    $E_{i_mj_m}\left( E_{ji} v_m\right) \in \mathcal{V}^{\lambda}_{\alpha*\beta}.$ Therefore, it suffices to show that $[E_{ji},E_{i_mj_m}]v_m \in \mathcal{V}^{\lambda}_{\alpha*\beta}.$
    
    There are a few cases we must consider. First, assume $i = i_m.$ Then 
    $$[E_{ji},E_{i_mj_m}]v_m = \left(E_{jj_m} - \delta_{j,j_m}E_{ii} \right)v_m = E_{jj_m}v_m - cv_m$$ for some scalar $c$. If $j \leq j_m$ then $E_{jj_m}v_m \in \mathcal{V}^{\lambda}_{\alpha*\beta}$ automatically. If instead $j > j_m$, then $k+1 \leq i = i_m < j_m$ so 
    $E_{jj_m}v_m \in \mathcal{V}^{\lambda}_{\alpha*\beta}$ by the inductive hypothesis. Either way $[E_{ji},E_{i_mj_m}]v_m \in \mathcal{V}^{\lambda}_{\alpha*\beta}.$ 
    
    Now assume $j = j_m.$ Then 
    $$[E_{ji},E_{i_mj_m}]v_m = \left( \delta_{i,i_m}E_{jj} - E_{i_mi} \right)v_m = cv_m - E_{i_mi}v_m$$ for some scalar $c.$ If $i_m \leq i$ then $E_{i_mi}v_m \in \mathcal{V}^{\lambda}_{\alpha*\beta}$ automatically. If $i_m > i$ then, since $k+1 \leq i$, $E_{i_mi}v_m \in \mathcal{V}^{\lambda}_{\alpha*\beta}$ by the inductive hypothesis. In either case, $[E_{ji},E_{i_mj_m}]v_m \in \mathcal{V}^{\lambda}_{\alpha*\beta}.$ Lastly, if $i\neq i_m$ and $j \neq j_m$ then $[E_{ji},E_{i_mj_m}] = 0$ so $[E_{ji},E_{i_mj_m}]v_m = 0 \in \mathcal{V}^{\lambda}_{\alpha*\beta}$ trivially.
\end{proof}

Since the group $\levi_{n}(k)$ is reductive we obtain the following representation theoretic interpretation for the coefficients $M_{(\alpha|\gamma)}^{(\mu|\lambda)}.$

\begin{thm}\label{rep theory for schur expansion}
Let $(\mu|\lambda),(\alpha|\gamma) \in \Sigma.$ For all sufficiently large n
$$M_{(\alpha|\gamma)}^{(\mu|\lambda)} = \Dim \Hom_{\levi_{n}(\ell(\mu))}\left( \chi^{(n)}(\alpha|\nu),\mathcal{V}^{(n)}(\mu|\lambda) \right) \in \mathbb{Z}_{\geq 0}.$$
\end{thm}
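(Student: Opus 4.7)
The plan is to realize the coefficients $M^{(\mu|\lambda)}_{(\alpha|\gamma)}$ as Levi multiplicities at finite level and then pass to the stable limit, comparing the resulting character expansion with the defining one. First, by Lemma \ref{levi submodule lemma}, the Demazure module $\mathcal{V}^{(n)}(\mu|\lambda)$ is a $\para_n(\ell(\mu))$-submodule of the ambient irreducible $\GL_n$-module, hence in particular a module for the reductive Levi subgroup $\levi_n(\ell(\mu)) \cong \tor_{\ell(\mu)} \times \GL_{n-\ell(\mu)}$. Complete reducibility then yields a decomposition
\[
\mathcal{V}^{(n)}(\mu|\lambda) \;\cong\; \bigoplus_{\substack{(\alpha|\gamma) \in \Sigma \\ \ell(\alpha) \leq \ell(\mu)}} \chi^{(n)}(\alpha|\gamma)^{\oplus N^{(n)}_{(\alpha|\gamma)}},
\]
where $N^{(n)}_{(\alpha|\gamma)} := \Dim \Hom_{\levi_n(\ell(\mu))}\big(\chi^{(n)}(\alpha|\gamma),\mathcal{V}^{(n)}(\mu|\lambda)\big) \in \mathbb{Z}_{\geq 0}$. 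Since the $\GL_{n-\ell(\mu)}$-character of $\mathcal{V}^{\gamma*0^{n-\ell(\mu)-\ell(\gamma)}}$ equals $s_\gamma(x_{\ell(\mu)+1},\ldots,x_n)$, taking characters produces
\[
\Char \mathcal{V}^{(n)}(\mu|\lambda) \;=\; \sum_{(\alpha|\gamma)} N^{(n)}_{(\alpha|\gamma)}\, x^\alpha\, s_\gamma(x_{\ell(\mu)+1},\ldots,x_n).
\]

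Next I would invoke Lemma \ref{rep theory of almost sym schur}, which gives $s_{(\mu|\lambda)} = \lim_n \Char \mathcal{V}^{(n)}(\mu|\lambda)$ in the Ion-Wu sense, and compare this limit with the defining expansion $s_{(\mu|\lambda)} = \sum_{(\alpha|\gamma)} M^{(\mu|\lambda)}_{(\alpha|\gamma)}\, x^\alpha\, s_\gamma[\mathfrak{X}_{\ell(\mu)}]$. For each fixed pair $(\alpha|\gamma)$, once $n - \ell(\mu) \geq \ell(\gamma)$, the finite-variable function $s_\gamma(x_{\ell(\mu)+1},\ldots,x_n)$ is the $n$-variable truncation of $s_\gamma[\mathfrak{X}_{\ell(\mu)}]$. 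Because $\mathcal{V}^{(n)}(\mu|\lambda)$ is homogeneous of total degree $|\mu|+|\lambda|$, only finitely many pairs $(\alpha|\gamma)$ contribute to either expansion, so everything reduces to a finite-dimensional comparison in each degree.

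The remaining step is to promote Ion-Wu convergence (defined via the monomial basis) to convergence of the monomial-Schur coefficients. I would do this by passing through the classical Kostka transition matrix: the change of basis between $\{m_\nu\}$ and $\{s_\gamma\}$ in each fixed degree is unitriangular with integer entries and hence invertible over $\mathbb{Z}$. Combined with the stabilization of the almost symmetric Kostka coefficients $K^{(\mu|\lambda)}_{(\alpha|\nu)}$ already used in Theorem \ref{Positivity for almost symmetric Kostka coefficients}, this guarantees that each $N^{(n)}_{(\alpha|\gamma)}$ converges to $M^{(\mu|\lambda)}_{(\alpha|\gamma)}$. Since $(N^{(n)}_{(\alpha|\gamma)})_n$ is a sequence of non-negative integers converging to $M^{(\mu|\lambda)}_{(\alpha|\gamma)}$, it must be eventually constant with that value, simultaneously establishing the claimed identity and the non-negativity.

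The main obstacle is precisely this last stabilization step: Ion-Wu convergence is formulated in the monomial basis, so one must check carefully that monomial-Schur coefficients also behave well in the limit. Once the invertibility of the Kostka transition matrix in each bounded degree is invoked, however, the remainder is an elementary fact about integer-valued convergent sequences.
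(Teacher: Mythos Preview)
Your proposal is correct and follows essentially the same route as the paper: invoke Lemma~\ref{levi submodule lemma} to view $\mathcal{V}^{(n)}(\mu|\lambda)$ as an $\levi_n(\ell(\mu))$-module, decompose it by complete reducibility, take characters, and compare with the defining monomial--Schur expansion of $s_{(\mu|\lambda)}$ via Lemma~\ref{rep theory of almost sym schur}. The only difference is one of emphasis: the paper simply asserts that for $n$ sufficiently large the truncation of $s_{(\mu|\lambda)}$ to $n$ variables equals $\Char\mathcal{V}^{(n)}(\mu|\lambda)$ exactly, whereas you spell out the stabilization argument via the Kostka transition matrix. Your extra care is not misplaced, but note that a shorter justification is available: the key polynomials $\mathcal{K}_{\mu*0^n*\rev(\lambda)}$ have integer coefficients, so Ion--Wu convergence of their $x^\alpha m_\nu$-coefficients forces eventual constancy, and the finite invertible change of basis to $x^\alpha s_\gamma$ immediately transfers this stabilization.
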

\begin{proof}
    From Lemma \ref{rep theory of almost sym schur} and the definition of the coefficients $M_{(\alpha|\gamma)}^{(\mu|\lambda)}$ we see that for $n$ sufficiently large 
    $$ \sum_{\substack{(\alpha|\nu) \\ \ell(\alpha) \leq \ell(\mu)}} M_{(\alpha|\nu)}^{(\mu|\lambda)} x^{\alpha}s_{\nu}[x_{\ell(\mu)+1}+\ldots + x_{n}] = \Char\mathcal{V}^{(n)}(\mu|\lambda).$$ From Lemma \ref{levi submodule lemma} we may decompose $\mathcal{V}^{(n)}(\mu|\lambda)$ into irreducible $\levi_{n}(\ell(\mu))$ submodules as
    $$\mathcal{V}^{(n)}(\mu|\lambda) = \bigoplus_{\substack{(\alpha|\nu) \\ \ell(\alpha) \leq \ell(\mu)}} \chi^{(n)}(\alpha|\nu)^{\oplus d^{(n)}_{(\alpha|\nu)}}$$ where 
    $d^{(n)}_{(\alpha|\nu)} = \Dim \Hom_{\levi_{n}(\ell(\mu))}\left( \chi^{(n)}(\alpha|\nu),\mathcal{V}^{(n)}(\mu|\lambda) \right).$
    Notice that 
    $$\Char\chi^{(n)}(\alpha|\nu) = x^{\alpha}s_{\nu}[x_{\ell(\mu)+1}+\ldots + x_{n}].$$ Putting this together we find that for all n sufficiently large
    \begin{align*}
        &\sum_{\substack{(\alpha|\nu) \\ \ell(\alpha) \leq \ell(\mu)}} M_{(\alpha|\nu)}^{(\mu|\lambda)} x^{\alpha}s_{\nu}[x_{\ell(\mu)+1}+\ldots + x_{n}]\\
        &= \Char\mathcal{V}^{(n)}(\mu|\lambda)\\
        &= \Char  \bigoplus_{\substack{(\alpha|\nu) \\ \ell(\alpha) \leq \ell(\mu)}} \chi^{(n)}(\alpha|\nu)^{\oplus d^{(n)}_{(\alpha|\nu)}} \\
        &= \sum_{\substack{(\alpha|\nu) \\ \ell(\alpha) \leq \ell(\mu)}} \Char\chi^{(n)}(\alpha|\nu)^{\oplus d^{(n)}_{(\alpha|\nu)}}\\
        &= \sum_{\substack{(\alpha|\nu) \\ \ell(\alpha) \leq \ell(\mu)}} d^{(n)}_{(\alpha|\nu)} \Char\chi^{(n)}(\alpha|\nu)\\
        &= \sum_{\substack{(\alpha|\nu) \\ \ell(\alpha) \leq \ell(\mu)}} \Dim \Hom_{\levi_{n}(\ell(\mu))}\left( \chi^{(n)}(\alpha|\nu),\mathcal{V}^{(n)}(\mu|\lambda) \right) x^{\alpha}s_{\nu}[x_{\ell(\mu)+1}+\ldots + x_{n}].\\
    \end{align*}
    Lastly, as the terms $x^{\alpha}s_{\nu}[x_{\ell(\mu)+1}+\ldots + x_{n}]$ for $\ell(\alpha) \leq \ell(\mu)$ are linearly independent we may compare coefficients to obtain the result.
\end{proof}

As a consequence of the above theorem we obtain a second proof of Theorem \ref{Positivity for almost symmetric Kostka coefficients}.

\begin{cor}\label{formula for monomial expansion in terms of schur expansion}
Let $(\mu|\lambda),(\alpha|\gamma)\in \Sigma.$  For all sufficiently large n
   $$|L_{(\alpha|\nu)}(\mu|\lambda)|  = \sum_{\gamma \in \Par} |\SSYT(\gamma,\nu)| \times \Dim \Hom_{\levi_{n}(\ell(\mu))}\left( \chi^{(n)}(\alpha|\gamma),\mathcal{V}^{(n)}(\mu|\lambda) \right) \in \mathbb{Z}_{\geq 0}.$$ 
\end{cor}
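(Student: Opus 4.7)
The proposal is to derive the corollary by combining the two expansions of $s_{(\mu|\lambda)}$ already established: the monomial expansion from Theorem \ref{Positivity for almost symmetric Kostka coefficients}, which gives $K_{(\alpha|\nu)}^{(\mu|\lambda)} = |L_{(\alpha|\nu)}(\mu|\lambda)|$, and the monomial–Schur expansion from Theorem \ref{rep theory for schur expansion}, which identifies $M_{(\alpha|\gamma)}^{(\mu|\lambda)}$ with a Hom-space dimension for sufficiently large $n$. The bridge between them is the classical identity $s_{\gamma}[X] = \sum_{\nu \in \Par} |\SSYT(\gamma,\nu)|\, m_{\nu}[X]$, applied with $X = \mathfrak{X}_{\ell(\mu)}$.

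First, I would write
\begin{align*}
s_{(\mu|\lambda)}
&= \sum_{\substack{(\alpha|\gamma) \\ \ell(\alpha)\leq\ell(\mu)}} M_{(\alpha|\gamma)}^{(\mu|\lambda)} \, x^{\alpha} s_{\gamma}[\mathfrak{X}_{\ell(\mu)}] \\
&= \sum_{\substack{(\alpha|\gamma) \\ \ell(\alpha)\leq\ell(\mu)}} M_{(\alpha|\gamma)}^{(\mu|\lambda)} \, x^{\alpha} \sum_{\nu \in \Par} |\SSYT(\gamma,\nu)|\, m_{\nu}[\mathfrak{X}_{\ell(\mu)}] \\
&= \sum_{\substack{(\alpha|\nu) \\ \ell(\alpha)\leq\ell(\mu)}} \left( \sum_{\gamma \in \Par} M_{(\alpha|\gamma)}^{(\mu|\lambda)} \, |\SSYT(\gamma,\nu)| \right) x^{\alpha} m_{\nu}[\mathfrak{X}_{\ell(\mu)}].
\end{align*}
On the other hand, by Definition \ref{almost symmetric Kostka coefficients defn} and Theorem \ref{Positivity for almost symmetric Kostka coefficients} the coefficient of $x^{\alpha}m_{\nu}[\mathfrak{X}_{\ell(\mu)}]$ in $s_{(\mu|\lambda)}$ is $|L_{(\alpha|\nu)}(\mu|\lambda)|$. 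Because the set $\{x^{\alpha}m_{\nu}[\mathfrak{X}_{\ell(\mu)}] : (\alpha|\nu) \in \Sigma, \ell(\alpha) \leq \ell(\mu)\}$ is a $\mathbb{Q}(q,t)$-basis of $\sP(\ell(\mu))^{+}$, I can compare coefficients term by term to conclude
\[
|L_{(\alpha|\nu)}(\mu|\lambda)| = \sum_{\gamma \in \Par} M_{(\alpha|\gamma)}^{(\mu|\lambda)} \, |\SSYT(\gamma,\nu)|.
\]

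Finally, I would invoke Theorem \ref{rep theory for schur expansion} to replace $M_{(\alpha|\gamma)}^{(\mu|\lambda)}$ with $\Dim \Hom_{\levi_{n}(\ell(\mu))}(\chi^{(n)}(\alpha|\gamma), \mathcal{V}^{(n)}(\mu|\lambda))$ for all sufficiently large $n$, producing the stated identity. Non-negativity and integrality are automatic since each factor on the right is a non-negative integer. The sum on the right is also finite: $M_{(\alpha|\gamma)}^{(\mu|\lambda)}$ vanishes unless $|\gamma| + |\alpha| = |\mu| + |\lambda|$, so only finitely many $\gamma$ contribute.

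There is no real obstacle here; the only things to be mindful of are (i) that the ``sufficiently large $n$'' clause is needed only to invoke Theorem \ref{rep theory for schur expansion} on the representation-theoretic side (the left-hand side is independent of $n$), and (ii) that the linear independence argument must take place inside $\sP(\ell(\mu))^{+}$ rather than $\sP_{as}^{+}$, which is harmless because $s_{(\mu|\lambda)} \in \sP(\ell(\mu))^{+}$ by construction.
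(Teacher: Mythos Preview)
Your proposal is correct and follows essentially the same approach as the paper: expand $s_{(\mu|\lambda)}$ in the monomial--Schur basis, substitute the classical Schur-to-monomial expansion $s_{\gamma} = \sum_{\nu} K_{\gamma,\nu}\, m_{\nu}$, compare coefficients against the monomial expansion to obtain $K_{(\alpha|\nu)}^{(\mu|\lambda)} = \sum_{\gamma} K_{\gamma,\nu} M_{(\alpha|\gamma)}^{(\mu|\lambda)}$, and then plug in the combinatorial and representation-theoretic interpretations from Theorems \ref{Positivity for almost symmetric Kostka coefficients} and \ref{rep theory for schur expansion}. Your added remarks on finiteness of the sum and the role of ``sufficiently large $n$'' are accurate and do not depart from the paper's argument.
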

\begin{proof}
First, we expand the Schur functions $s_{\gamma}[\mathfrak{X}_{\ell(\mu)}] $ into the monomial symmetric function basis:
    \begin{align*}
        s_{(\mu|\lambda)}&= \sum_{\substack{(\alpha|\gamma) \\ \ell(\alpha) \leq \ell(\mu)}} M_{(\alpha|\gamma)}^{(\mu|\lambda)} x^{\alpha}s_{\gamma}[\mathfrak{X}_{\ell(\mu)}] \\
        &= \sum_{\substack{(\alpha|\gamma) \\ \ell(\alpha) \leq \ell(\mu)}} M_{(\alpha|\gamma)}^{(\mu|\lambda)} x^{\alpha}\sum_{\nu \in \Par} K_{\gamma, \nu} m_{\nu}[\mathfrak{X}_{\ell(\mu)}] \\
        &= \sum_{\substack{(\alpha|\nu) \\ \ell(\alpha) \leq \ell(\mu)}} \left( \sum_{\gamma \in \Par} K_{\gamma, \nu} M_{(\alpha|\gamma)}^{(\mu|\lambda)}  \right) x^{\alpha} m_{\nu}[\mathfrak{X}_{\ell(\mu)}].\\
    \end{align*}
    From here we find 
    $$K_{(\alpha|\nu)}^{(\mu|\lambda)} = \sum_{\gamma \in \Par} K_{\gamma, \nu} M_{(\alpha|\gamma)}^{(\mu|\lambda)}.$$ Lastly, by combining the formula $K_{\gamma, \nu} = |\SSYT(\gamma,\nu)|$, the expression for $M_{(\alpha|\gamma)}^{(\mu|\lambda)}$ in Theorem \ref{rep theory for schur expansion}, and the equation $K_{(\alpha|\nu)}^{(\mu|\lambda)} = |L_{(\alpha|\nu)}(\mu|\lambda)|$ from the proof of Theorem \ref{Positivity for almost symmetric Kostka coefficients} we conclude the desired result.
\end{proof}

\begin{remark}
    The \textbf{\textit{inverse Kostka coefficients}} $K^{(-1)}_{\gamma, \lambda}$ are given by 
    $$m_{\gamma} = \sum_{\lambda} K^{(-1)}_{\gamma, \lambda} s_{\lambda}.$$ Notice that 
    $$\delta_{\gamma,\lambda} = \sum_{\mu}K^{(-1)}_{\gamma, \mu} K_{\mu,\lambda}.$$ The inverse Kostka coefficients are known from the work of E$\tilde{g}$eciol$\tilde{g}$u-Remmel \cite{lu1990ACI} to have an explicit combinatorial formula involving \textbf{\textit{signed rim hook tabloids}} which we will not detail here. In the same way we obtained Corollary \ref{formula for monomial expansion in terms of schur expansion} we may instead expand each $m_{\lambda}$ into the Schur basis to obtain for all sufficiently large $n$
    $$\Dim \Hom_{\levi_{n}(\ell(\mu))}\left( \chi^{(n)}(\alpha|\nu),\mathcal{V}^{(n)}(\mu|\lambda) \right) = \sum_{\gamma \in \Par} K^{(-1)}_{\gamma, \nu} \times |L_{(\alpha|\gamma)}(\mu|\lambda)|.$$ Using the combinatorial formula for the $K^{(-1)}_{\gamma, \lambda}$ we see that this gives a purely combinatorial formula. However, this is \textbf{not} a non-negative combinatorial formula as the inverse Kostka coefficients are often negative. It would be interesting to find a non-negative combinatorial formula for the $M_{(\alpha|\gamma)}^{(\mu|\lambda)}.$

    Lastly we remark that by carefully taking direct limits of groups and their corresponding modules in the right way it is possible to simplify the expression in Theorem \ref{rep theory for schur expansion}: 
    $$M_{(\alpha|\gamma)}^{(\mu|\lambda)} = \Dim \Hom_{\levi_{\infty}(\ell(\mu))}\left( \chi^{(\infty)}(\alpha|\nu),\mathcal{V}^{(\infty)}(\mu|\lambda) \right).$$ 
\end{remark}

\printbibliography

\end{document}